\newtheorem{theorem}{Theorem}[section]
\newtheorem{lemma}[theorem]{Lemma}
\newtheorem{proposition}[theorem]{Proposition}
\newtheorem{corollary}[theorem]{Corollary}
\newtheorem{definition}[theorem]{Definition}
\theoremstyle{definition}
\newtheorem{remark}[theorem]{Remark}
\newtheorem{example}[theorem]{Example}
\numberwithin{equation}{section}
\newcommand{\R}{\mathbb{R}}
\def\g{{\mathfrak{g}}}              % Lie algebra
\def\Mon{\mathcal{N}}               % Monodromy (in the algebra)
\DeclareMathOperator\hol{hol}       % holonomy
\DeclareMathOperator\Hol{Hol}
\DeclareMathOperator\im{Im}         % Im
\DeclareMathOperator\Hom{Hom}       % Hom
\def\G{\mathcal{G}}                 % s-simply connected groupoid of
\def\tt{\bold{t}}                   % target map
\def\one{\bold{1}}                  % units
\newcommand{\vba}{$\mathcal{VB}$-algebroid\xspace }                     %  VB-algebroid
\newcommand{\vbas}{$\mathcal{VB}$-algebroids\xspace}                    %  VB-algebroids
\newcommand{\vbg}{$\mathcal{VB}$-groupoid\xspace }                     %  VB-algebroid
\newcommand{\vbgs}{$\mathcal{VB}$-groupoids\xspace}                    %  VB-algebroids
\newcommand{\set}[1]{\left\{#1\right\}}					% Set
\newcommand{\cc}{\hat{c}}
\newcommand\D{\mathcal{D}}
\newcommand{\DVB}{double vector bundle\xspace}
\newcommand{\DVBs}{double vector bundles\xspace}
\newcommand{\bijar}[1][]{%
 \ar[#1]
 \ar@<0.7ex>@{}[#1]|-*=0[@]{\sim}} 
\DeclareMathOperator{\End}{End}
\DeclareMathOperator{\id}{id}
\DeclareMathOperator{\ad}{ad}
\DeclareMathOperator{\coker}{coker}
\def\E{\mathcal{E}}
\def\D{\mathcal{D}}
\def\br{\begin{remark}}
\def\er{\end{remark}}
\def\Rep{{\bf{\mathcal{R}\mkern-1mu\textbf{\textit{ep}}}_{\boldsymbol{\infty}}^2}}
\def\REP{{\bf{\mathcal{R}\mkern-1mu\textbf{\textit{ep}}}_{\boldsymbol{\infty}}}}
\def\LIE{{\bf\mathcal{L}\mkern-1,5mu\textit{\textbf{ie}}}} 
\def\Lie{{\bf\mathcal{L}\mkern-1,5mu\textit{\textbf{ie}}^{2}_{\boldsymbol{\infty}}}}
\def\VBG{{\bf\mathcal{VB}_{G}}}
\def\VBA{{\bf\mathcal{VB}_{A}}}
\def\sig{\sigma}
\def\VV{\mathsf{VE}}
\def\e{\epsilon}
\def\Dd{D_{A,E,C}}
\def\su{\mathfrak{su}^*_2}
\def\be{\bar{e}}
\def\ale{}
\begin{document}

\title[Obstructions to the integrability of $\mathbf{\mathcal{VB}}$-algebroids]{Obstructions to the integrability of $\mathbf{\mathcal{VB}}$-algebroids}

%\author[Cabrera, Brahic, Ortiz]{Alejandro Cabrera,	Olivier Brahic, Cristian Ortiz}
\author[Cabrera]{Alejandro Cabrera}
\address{Instituto de Matem\'atica, Universidade Federal de Rio de Janeiro  \\
	Av. Athos da Silveira Ramos 149,   Cidade Universit\'aria \\ Ilha do Fund\~ao,  21941-909 Rio de Janeiro - Brasil
	}
\email{acabrera@labma.ufrj.br}
\author[Brahic]{Olivier Brahic}
\address{
    Departamento de Matem\'atica, Universidade Federal do Paran\'a \\
    Jardim das Am\'ericas, 81531-980, Curitiba - Brasil.}
\email{olivier@ufpr.br}
\author[Ortiz]{Cristian Ortiz}
\address{Instituto de Matem\'atica e Estat\'istica, Universidade de S\~ao Paulo\\ 
	Rua do Mat\~ao 1010, Cidade Universit\'aria, 05508-090 S\~ao Paulo - Brasil.}
\email{cortiz@ime.usp.br}

\begin{abstract}
\vbgs are vector bundle objects in the category of Lie groupoids: the total and the base spaces of the vector bundle are Lie groupoids and the vector bundle structure maps are required to define Lie groupoid morphisms. The infinitesimal version of \vbgs are \vbas, namely, vector bundle objects in the category of Lie algebroids.
Following recent developments in the area, we show that a \vba is integrable to a \vbg if and only if its base algebroid is integrable and the spherical periods of certain underlying cohomology classes vanish identically. We illustrate our results in concrete examples. Finally, we obtain as a corollary computable obstructions for a $2$-term representation up to homotopy of Lie algebroid to arise as the infinitesimal counterpart of a smooth such representation of a Lie groupoid.
\end{abstract}

\maketitle

\section{Introduction}

Lie theory for Lie algebroids and Lie groupoids has been extensively studied in recent years in connection with Poisson, symplectic and related geometries \cite{CFpoisson, CFstability, Cprequantization, BCWZ, CZ}. 
A paradigmatic example of the interplay between Lie theory and Poisson/symplectic geometry arises from the correspondence between Poisson manifolds and symplectic groupoids (\cite{CDW}). On the Lie theoretical side, we recall that a Lie groupoid $G\rightrightarrows M$ can be differentiated, giving rise to a Lie algebroid $A=\LIE(G)\to M$, in analogy to Lie groups and Lie algebras. The connection comes from the fact that a Poisson manifold $(M,\pi)$ gives rise to a Lie algebroid $A=T^*_\pi M \to M$ and, if $A=\LIE(G)$, then the (source simply-connected, s.s.c.) integration $G$ carries an additional compatible symplectic structure. The Poisson geometry of $(M,\pi)$ can be studied, to a large extent, through the multiplicative symplectic geometry of $G$ (see e.g. \cite{CFpoisson, CDW}). 

On the other hand, it is well known that, unlike in the case of Lie groups and Lie algebras, not every Lie algebroid can be realized as the infinitesimal counterpart of a Lie groupoid. A Lie algebroid $A$ which is isomorphic to one of the form $\LIE(G)$ for some Lie groupoid $G$ is called \emph{integrable}.
The obstructions appearing in this integrability problem were identified by Crainic and Fernandes in \cite{CF03} in terms of certain monodromy groups $\Mon(A)$ associated to $A$. (These monodromy groups are not easily computable in general.)

%about VB
In this paper, we are concerned with the integrability of an enriched class of Lie algebroids called \emph{\vbas}. \vbas and \vbgs are examples of \emph{double} structures, namely, they given by squares

 \begin{align*}
\SelectTips{cm}{}
\xymatrix{ D \ar[d]_{p_D}\ar[r]^{p} &A\ar[d]^{p_A}\\
           E \ar[r]^{p_E}& M} \hskip2cm \  \ \xymatrix{ H \ar@<0.5ex>[d]\ar@<-.5ex>[d]\ar[r]^{q_H} &G\ar@<0.5ex>[d]\ar@<-.5ex>[d]\\
           E \ar[r]^{q_E}& M}
\end{align*}
\noindent where the vertical arrows denote Lie algebroid structures (respectively, Lie groupoid structures) and the horizontal arrows denote vector bundle structures. Vertical and horizontal structures  are required to be compatible in the sense that the vertical structure maps must define morphisms for the horizontal structure and vice-versa (see \cite{GM08,GM10,BCdH} for minimalistic characterizations). We shall use  the notation $(D,A,E,M)$ for \vbas and $(H,G,E,M)$ for \vbgs.

The application of the Lie functor to the diagram underlying a \vbg $(H,G,E,M)$ gives rise to a \vba $(\LIE(H),\LIE(G),E,M)$, which is the infinitesimal counterpart of $(H,G,E,M)$ (see e.g. \cite{mackenzie-book}). A \vba isomorphic to the infinitesimal counterpart of a \vbg will be called \emph{integrable}. For example, given a Lie algebroid $A\to M$, its tangent and cotangent bundles can be seen to inherit the structure of \vbas  $(TA,A,TM,M)$ and $(T^*\!A,A,A^*,M)$, respectively. Analogously, given a Lie groupoid $G\rightrightarrows M$ with $\LIE(G)=A$, both its tangent and cotangent bundles define \vbgs %$(TG,G,TM,M)$ and $(T^*\!G,G,A^*,M)$ and these 
which integrate the above \vbas (see e.g. \cite{GM08, GM10, mackenzie-book}).

The concept of \vba was introduced by Pradines \cite{Pradines} and it has been further studied by Mackenzie \cite{mackenzie-book}, Gracia-Saz and Mehta \cite{GM08}, among other authors (see  \cite{Mehta, Voronov} for their relation to supergeometry). \vbgs were also studied in \cite{mackenzie-book,GM10}. Lie theory for \vbas and \vbgs was systematically studied in \cite{BCdH}.

\vbas and \vbgs have shown to be specially important in the infinitesimal description of Lie groupoids equipped with multiplicative geometric structures, namely, in the study of generalizations of the symplectic/Poisson  correspondence mentioned earlier. For example, multiplicative differential forms turn out to be equivalent to cocycles involving the \vbg structure on $TG$ while multiplicative multivector fields to cocycles involving $T^*\!G$. Their infinitesimal counterpart then becomes clear: they correspond to cocycles for \vba structures arising from $TA$ and $T^*\!A$ (see \cite{BC,MX2} and references therein). More general \vbgs and \vbas appear when considering other multiplicative geometries like multiplicative foliations \cite{Jotz12, JotzOrtiz, DJO} or, more generally, multiplicative Dirac structures \cite{LiblandThesis,Ortiz13}. 
  The $\mathcal{VB}$-structure plays a fundamental role in all these cases and hints that Lie theory in the $\mathcal{VB}$-category is an interesting object to study within this area.

%background for VBs
% There are two approaches to \vbas (analogous for \vbgs): one is to see them as Lie algebroids $D\to E$ endowed with additional linear structure and the other is to think of them as providing a generalized type of representation of the base $A$ on the fibers. The second one involves a non-canonical choice and will be detailed at the end of this introduction. (An analogous observation holds for \vbgs.)

%Both \vbgs and \vbas arise in connection with representation theory \cite{GM08,GM10} and 
%From the first perspective, 
%  Lie theory for \vbas and \vbgs was systematically studied in \cite{BCdH}. 

The aim of this work is to characterize the obstructions appearing in the integrability problem for \vbas, namely, the obstructions for the existence of a \vbg $(H,G,E,M)$ such that $\LIE(H,G,E,M)$ is isomorphic to a given \vba. A key result on that matter was shown in \cite{BCdH}, stating that a \vba $(D,A,E,M)$ is integrable iff $D\to E$ is integrable as a Lie algebroid. This means that if the total space $D\to E$ of a \vba is integrable then so is $A\to M$ and, furthermore, the extra vector bundle structure integrates automatically at the level of the (s.s.c.) Lie groupoids. For this reason we only need to study the integrability of $D$. 

Our main result is given by Theorem \ref{thm:integrals}, which states that a \vba $(D,A,E,M)$ is integrable iff $A$ is integrable and the spherical periods of certain cohomology classes associated to $(D,A,E,M)$ vanish. Assuming that $A$ is integrable, we first show that the obstructions to the integrability lie in $\Mon(D)\cap \ker(p:D\to A)$ and then express the elements in this set as spherical integrals of an underlying $2$-cochain on $A$. The first part comes from the general theory of obstructions of \cite{CF03} adapted to the $\mathcal{VB}$-case and the second from the description of the structure of a \vba in terms of connections and cochains given in \cite{GM08}. The vanishing of these integrals can also be seen as the condition for certain algebroid cohomology classes to be in the image of the Van Est map (\cite{C00}).

% about rep_infty
We end this introduction with a second 'extrinsic' approach to \vbas and \vbgs. Here, the key point stems from the work of Gracia-Saz and Mehta, \cite{GM08} and \cite{GM10}, where they showed that, upon non-canonical \emph{splittings}, a \vba (resp. \vbg) structure boils down to a \emph{representation up to homotopy} (\cite{AriasCrainic,AriasCrainic2}) of $A$ (resp. of $G$) on a $2$-term complex coming from the fibers of the horizontal structures.
% More precisely, in both $(D,A,E,M)$ and $(H,G,E,M)$ the fibers over the base can be split so that
% \[ D\simeq p_A^*E \oplus_A p_A^*C, \ \ H\simeq t^*C \oplus_G s^*E,\]
% where $C$ is an underlying vector bundle over $M$ called, in both cases, the \emph{core}. It can then be shown that the \vba structure splits into the Lie algebroid structure on $A\to M$ and a set of data $(\partial,\nabla^E,\nabla^C,\omega)$, where $\nabla^E,\nabla^C$ are respectively $A$-connections on $E$ and $C$ which are flat only up to homotopy, i.e. the vanishing of the curvatures is controlled by a bundle map $\partial:C\to E$ and a 2-cochain $\omega\in \Gamma(\wedge^2A^*,\mathrm{Hom}(E,C))$.
% This data corresponds to a representation up to homotopy of $A$ on the $2$-term complex $\d:C\to E$ in the sense of \cite{AriasCrainic} (see also \cite{GM08} where an interpretation in terms of superconnections is also given). An analogous statement holds for split \vbgs and representations up to homotopy of $G$ on $\d:C\to E$ (see \cite{AriasCrainic2,GM10}).  As an example, although it is known that there is no adjoint representation of a general Lie algebroid in the ordinary sense, any splitting of the \vba $(TA,A,TM,M)$ gives rise to a representation up to homotopy playing the role of the \emph{adjoint}. (Something analgous happens with $T^*A$ and the \emph{coadjoint} representation.)
Ordinary representations of $A$ and $G$ define particular cases of the 'up to homotopy' ones (and hence of $\mathcal{VB}$'s), but these last are strictly more general. For example, the \emph{adjoint} and \emph{coadjoint} representations of $A$, which are well known to be only 'up to homotopy' for a general $A$, arise from the intrinsic \vba structure of $TA$ and $T^*\!A$ after suitable splitting.

As an application of our main result and building on the above correspondences, we obtain a notion of \emph{integrability} of a $2$-term representation up to homotopy of a Lie algebroid $A$ and characterize the underlying obstructions by translating our main result from the world of \vbas to that of representations. (This notion of integrability coincides with the one stemming from \cite{AriasSchatz}.)

This paper is organized as follows. In section 2 we present background material on \vbgs and \vbas, including the main examples and properties. We also recall the notion of splitting of a \vba and its relation to representations up to homotopy. In section 3, we briefly review the general theory of obstructions from \cite{CF03} and then prove Theorem \ref{prop:intiff}, Proposition \ref{prop:reg type} and Theorem \ref{thm:integrals}, which are the main results of this work. In Section 4 we apply our results to obtain Corollary \ref{cor:integration reps} which provides general criteria for the integrability of two-term representations up to homotopy.  Lastly, we mention the role of integrability in the simplicial formalism for integration of representations up to homotopy given in \cite{CA01}.% In that reference, the authors introduce a simplicial construction taking a representations up to homotopy of a Lie algebroid $A$ and, in a certain sense, 'integrating' it to representations up to homotopy of the so called \emph{infinity groupoid} of $A$. We finish the paper discussing the role of integrability (in the present Lie-theoretic sense) in the context of that simplicial construction.

\subsection*{Acknowledgements}
The authors would like to thank the PPGMA at UFPR, Curitiba, as well as IMPA, Rio de Janeiro, for supporting several visits while part of this work was carried out. Also, the authors thank Henrique Bursztyn, Matias del Hoyo, Thiago Drummond and Rajan Mehta, for interesting discussions and useful comments and suggestions that have improved this paper. Brahic was supported by CNPq (Programa Ci\^encia sem Fronteiras 401253/2012-0). Cabrera also thanks CNPq (Projeto Universal 471864/2012-9) for support. Ortiz thanks CNPq (Projeto Universal 482796/2013-8) and CAPES-COFECUB (grant 763/13) for supporting this work. The authors are very grateful to the anonymous referees for all comments and suggestions which have improved this manuscript significantly.

\section{Background material about $\mathcal{VB}$-groupoids and $\mathcal{VB}$-algebroids}\label{sec:background}%%%%%%%%%%%%%%%%%%%%%%%%%%%%%%%%%%
%%%%%%%%%%%%%%%%%%%%%%%%%%%%%%%%%%%%%%%%%%%%%%%%%%%%%%%%%%%%%%%%%%%%%%%%%%%%%%%%%%%%%%%%%%%%%%%%%%%%%%%%%%%%%%%%%%%%%%%%%%%%
%%%%%%%%%%%%%%%%%%%%%%%%%%%%%%%%%%%%%%%%%%%%%%%%%%%%%%%%%%%%%%%%%%%%%%%%%%%%%%%%%%%%%%%%%%%%%%%%%%%%%%%%%%%%%%%%%%%%%%%%%%%%

In this section we briefly recall the definition of $\mathcal{VB}$-groupoids and their infinitesimal counterparts, $\mathcal{VB}$-algebroids. There are several ways to introduce such structures, we shall follow \cite{BCdH} and do this by emphasizing the role of the underlying homogeneous structures.

Given a vector bundle $q:V \to M$, we shall denote by $m_\lambda:V \to V$ the fiberwise scalar multiplication by $\lambda \in \R$. The map $m: \R \times V \to V; (\lambda,v)\to m_{\lambda}(v)$, satisfies $m_\lambda\circ m_\mu = m_{\lambda \mu}$, $m_1 = \id$ and the regularity condition that $\frac{d}{d\lambda}m_\lambda(v)|_{\lambda=0}=0$ implies $v=m_0(v)$. Such an $\R$-action is called {\bf homogeneous structure} associated to the vector bundle structure on $V$. The base manifold $M$ can be identified with the zero section $m_0(V)$ and the bundle projection with $m_0$, namely,
\[ M \simeq m_0(V), \ q \simeq m_0:V \to M.\]
It is important to recall from \cite{GrabR} that homogeneous structures characterize the underlying vector bundle structure completely.
Moreover, a smooth map defines a vector bundle morphism iff it commutes with the underlying homogeneous structures. This point of view was used in \cite{BCdH} to give the characterizations of \vbas and \vbgs that we shall work with below.

%\br \label{rmk:homogeneous}
% More generaly, an homogeneous structure on a smooth manifold $E$ is defined as a smooth action $h:\R_{+}\times E\longrightarrow E$ of the multiplicative monoid $\R_{+}$ which is non-singular in the sense that $\frac{d}{dt}(h(t,e))\vert_{t=0}=0$ iff $e\in h_{0}(E)$. Fiberwise multiplication on a vector bundle always defines an homogeneous structure. Moreover, a theorem of \cite{GrabR} states that for $h:\R_{+}\times E\longrightarrow E$ a homogeneous structure on a smooth manifold $E$, then there exists a unique vector bundle structure on $h_0:E \to h_0(E)$ such that $h$ coincides with scalar multiplication on the fibers of $E$. Accordingly, vector bundle morphisms are uniquely characterized as smooth maps which commute with the corresponding homogeneous structures.\comment{oli: is this reamark really necessary ?}
%\er

%%%%%%%%%%%%%%%%%%%%%%%%%%%%%%%%%%%%%%%%%%%%%%%%%%%%%%%%%%%%%%%%%%%%%%%%%%%%%%%%%%%%%%%%%%%%%%%%%%%%%%%%%%%%%%%%%%%%%%%%%%%%
\subsection{$\mathcal{VB}$-groupoids}
%%%%%%%%%%%%%%%%%%%%%%%%%%%%%%%%%%%%%%%%%%%%%%%%%%%%%%%%%%%%%%%%%%%%%%%%%%%%%%%%%%%%%%%%%%%%%%%%%%%%%%%%%%%%%%%%%%%%%%%%%%%%
A $\mathcal{VB}$-groupoid can be roughly defined as a Lie groupoid object in the category of vector bundles. To make this more precise, consider a diagram:
\begin{align}\label{VBgroupoid}
\SelectTips{cm}{}\xymatrix{ H \ar@<0.5ex>[d]\ar@<-.5ex>[d]\ar[r]^{q_H} &G\ar@<0.5ex>[d]\ar@<-.5ex>[d]\\
           E \ar[r]^{q_E}& M}
\end{align}
where double arrows denote Lie groupoid structures and single arrows denote vector bundle structures. Denote by $m$  the homogeneous structure corresponding to the vector bundle $H\to G$ and $m^E$ the one corresponding to $E \to M$.

The original definition of a \vbg involves requiring compatibility between the Lie groupoid structure and all the structure maps underlying the vector bundle structure. (Minimal sets of axioms can be found in \cite{GM10}.) Following the perspective used in \cite{BCdH}, since the homogeneous structure $m$ encodes the vector bundle structure altogether, it turns out to be enough to ask for compatibility between the groupoid structure and $m$ only. 
\begin{definition}
A \textbf{$\mathcal{VB}$-groupoid} $(H,G,E,M)$ is a diagram like \eqref{VBgroupoid} such that $m_\lambda:H \to H$ defines a Lie groupoid morphism covering $m^E_\lambda: E \to E$ for each $\lambda \in \R$. Given two \vbgs $(H,G,E,M)$ and $(H',G',E',M')$, a {\bf morphism of \vbgs} is Lie groupoid morphism $\Phi : H \to H'$ over a map $\phi: E \to E'$ such that it commutes with the underlying homogeneous structures $(m,m^E)$ and $(m',m^{E'})$.
\end{definition}

The equivalence to other definitions can be found in \cite{BCdH}. We shall use the notation $\tilde{{\bold{s}}},\tilde{\tt},\tilde{\one}$ (resp. ${\bold{s}},\tt,\one$) for the source, target and units maps of $H$ (resp. $G$).

%\comment{A: speak a bit about the structure, namely, define the core of a \vbg...}

\begin{example}[Pair \vbg] Let $E \to M$ be any vector bundle. Then $(E\times E, M\times M, E, M)$ defines a \vbg where the underlying groupoids are the pair groupoids. The diagram \eqref{VBgroupoid} reads:
$$\SelectTips{cm}{}\xymatrix{
 E\times E  \ar@<0.5ex>[d]\ar@<-.5ex>[d]
            \ar[r]                         & M\times M \ar@<0.5ex>[d]\ar@<-.5ex>[d]\\
 E          \ar[r]                         & M.}$$
\end{example}

\begin{example}[Tangent groupoid]%%%%%%%%%%%%%%%%%%%%%%%%%%%%%%%%%%%%%%%%%%%%%%%%%%%%%%%%%%%%%%%%%%%%%%%
Let $G\rightrightarrows M$ be a Lie groupoid. The application of the tangent functor to each of the structural maps that define $G$, gives rise to a Lie groupoid $TG\rightrightarrows TM$, referred to as the \textbf{tangent groupoid} of $G$.
 One easily checks that $(TG,G,TM,M)$ is a $\mathcal{VB}$-groupoid whose diagram is:
$$\SelectTips{cm}{}\xymatrix{
 TG  \ar@<0.5ex>[d]\ar@<-.5ex>[d]
            \ar[r]                         & G \ar@<0.5ex>[d]\ar@<-.5ex>[d]\\
 TM          \ar[r]                         & M.}$$
\end{example}%%%%%%%%%%%%%%%%%%%%%%%%%%%%%%%%%%%%%%%%%%%%%%%%%%%%%%%%%%%%%%%%%%%%%%%%%%%%%%%%%%%%%%%%%%%%%%%%

\begin{example}[Cotangent groupoid]%%%%%%%%%%%%%%%%%%%%%%%%%%%%%%%%%%%%%%%%%%%%%%%%%%%%%%%%%%%%%%%%%%%%%
Let $G\rightrightarrows M$ be a Lie groupoid with Lie algebroid $A$. It was shown in \cite{CDW}, that the cotangent bundle $T^*G$ is a Lie groupoid over $A^*$. The source and target maps are defined by
$$\tilde{{\bold{s}}}(\alpha_g)u=\alpha_g(Tl_g(u-T\tt(u)))\quad \text{ and }\quad \tilde{\tt}(\beta_g)v=\beta_g(Tr_g(v))$$
where $\alpha_g \in T^*_gG$, $u\in A_{{\bold{s}}(g)}G$ and $\beta_g\in T^*_gG$, $v\in A_{\tt(g)}G$. Here, $l_g$ and $r_g$ denote the left and right multiplication by $g\in G$, respectively. The multiplication on $T^*G$ is defined by
$$(\alpha_g\circ \beta_h)(T\mu(X_g, Y_h))= \alpha_g(X_g)+ \beta_h(Y_h)$$
for $(X_g,Y_h)\in T_{(g,h)}G_{(2)}$ and $\mu:G_{(2)}=\{(g,h)\in G\times G:{\bold{s}}(g)=\tt(h)\} \to G$ denotes the multiplication map on $G$. 
We refer to $T^*G$ with the groupoid structure over $A^*$ as the \textbf{cotangent groupoid} of $G$. One observes that $(T^*G,G,A^*,M)$ is a $\mathcal{VB}$-groupoid with diagram:
$$\SelectTips{cm}{}\xymatrix{
 T^*G  \ar@<0.5ex>[d]\ar@<-.5ex>[d]
            \ar[r]                         & G \ar@<0.5ex>[d]\ar@<-.5ex>[d]\\
 A^*         \ar[r]                         & M.}$$
\end{example}%%%%%%%%%%%%%%%%%%%%%%%%%%%%%%%%%%%%%%%%%%%%%%%%%%%%%%%%%%%%%%%%%%%%%%%%%%%%%%%%%%%%%%%%%%%%%%%%%

The last two examples provide a glimpse of the existence of a rich theory of duality for \vbgs and \vbas, see for example \cite{Prad2, mackenzie-book}.

%%%%%%%%%%%%%%%%%%%%%%%%%%%%%%%%%%%%%%%%%%%%%%%%%%%%%%%%%%%%%%%%%%%%%%%%%%%%%%%%%%%%%%%%%%%%%%%%%%%%%%%%%%%%%%%%%%%%%%%%%%%%
\subsection{$\mathcal{VB}$-algebroids}\label{sec:vba}%%%%%%%%%%%%%%%%%%%%%%%%%%%%%%%%%%%%%%%%%%%%%%%%%%%%%%%%%%%%%%%%%%%%%%%%%%%%%%%%%%%%%%
%%%%%%%%%%%%%%%%%%%%%%%%%%%%%%%%%%%%%%%%%%%%%%%%%%%%%%%%%%%%%%%%%%%%%%%%%%%%%%%%%%%%%%%%%%%%%%%%%%%%%%%%%%%%%%%%%%%%%%%%%%%%
As for the case of a \vbg, a \vba can be thought of as a Lie algebroid object in the category of vector bundles. More precisely, one may consider a commutative square in which vertical arrows correspond to Lie algebroid structures while horizontal arrows to the projection of a vector bundle structure:
\begin{align}\label{doubleVB}
\SelectTips{cm}{}
\xymatrix{ D \ar[d]_{p_D}\ar[r]^{p} &A\ar[d]^{p_A}\\
           E \ar[r]^{p_E}& M.}
\end{align}
We shall denote by $m$ the homogeneous structure corresponding to the top horizontal bundle $D\to A$ and by $m^E$ the one corresponding to $E\to M$. 

As for \vbgs, we follow \cite{BCdH} and require a compatibility between vertical Lie algebroid structures and the homogeneous structure $m$; the equivalence to other definitions can be also found in that reference. (See \cite{GM08} for different sets of axioms for a \vba and their equivalence.)

\begin{definition}\label{def:vba}
A \textbf{$\mathcal{VB}$-algebroid} $(D,A,E,M)$ is a diagram like \eqref{doubleVB} where the vertical arrows $D\longrightarrow E$ and $A\longrightarrow M$ are equipped with Lie algebroid structures such that $m_\lambda:D \to D$ defines a Lie algebroid morphism covering $m^E_\lambda:E \to E$ for each $\lambda \in \R$. Given two \vbas $(D,A,E,M)$ and $(D',A',E',M')$ a {\bf morphism of \vbas} is a Lie algebroid morphism $\Phi: D \to D'$ covering a map $\phi:E \to E'$ which commutes with the underlying homogeneous structures $(m,m^E)$ and $(m',m^{E'})$.
\end{definition}

Given this definition, we can introduce the notion of a {\bf double vector bundle} as a \vba $(D,A,E,M)$ encoded in diagram \eqref{doubleVB} in which the Lie algebroid structures are trivial (see also \cite{Pradines, mackenzie-book,GM08}). Hence, every \vba has an underlying \DVB{} structure, given by the same diagram \eqref{doubleVB} and by forgetting the Lie algebroid structures defined on the vertical arrows.
\bigskip

We now briefly recall general facts about structure of \DVBs and \vbas  (see \cite{mackenzie-book,GM08,GrabR} for a more detailed treatment).
Given a \DVB $(D, A, E, M)$, the vector bundles $A$ and $E$ are called
the \textbf{side bundles}. The zero sections are denoted by $0^{A}: M \longrightarrow A$, $0^{E}: M \longrightarrow E$, $0^D_{A}: A \longrightarrow D$ and $0^D_{E}: E \longrightarrow D$. 
Elements of $D$ are written $(d, a, e, m)$, where $d \in D$, $m\in M$ and $a=p(d) \in A_m$, $e=p_D(d)\in
E_m$.

% <New
In a double vector bundle, the compatibility of the homogeneous structures $m$ and $m^E$ imply (see \cite{GrabR}) that the two additions and scalar multiplications on $D$, seen as a vector bundle over either $A$ or $E$, satisfy the following \emph{interchange laws}:
\begin{align*}
%\label{eq:dvbcondition1}
(d_1 +_E d_2) +_A (d_3 +_E d_4) &= (d_1 +_A d_3)+_E (d_2 +_A d_3)\\
%\label{eq:dvbcondition2}
t \cdot_A (d_1+_E d_2)&=(t\cdot_A d_1)+_E(t\cdot_A d_2),\\
%\label{eq:dvbcondition3}
t \cdot_E (d_1+_A d_3)&=(t\cdot_E d_1)+_A(t\cdot_E d_3),\\
%\label{eq:dvbcondition4}
t \cdot_A (s \cdot_E d)  &= s\cdot_E(t \cdot_A d).
\end{align*}
 whenever these make sense, namely, for any $s,t\in \mathbb{R}$ and $d_1,d_2,d_3,d_4 \in D$ such that $(d_1,d_2) \in  D \times_E D$,  $(d_1,d_3) \in  D \times_A D,$
$(d_3,d_4) \in  D \times_E D,$  $(d_2,d_4) \in  D \times_A D$ and $d\in D$.

\begin{definition} The \textbf{core} $C$ of a \DVB is the intersection of the kernels of
$p$ and $p_D$. It has a natural vector bundle structure over
$M$, the projection of which we denote by  $p_C: C \longrightarrow M$. The
inclusion $C \hookrightarrow D$ is usually denoted by
$$
C_m \ni c \longmapsto \overline{c} \in p^{-1}(0^A_{m}) \cap p_{D}^{-1}(0^E_{m}).
$$
\end{definition}
Elements in $C_m$ can be added and multiplied by scalars using the vector bundle structure of $D$  over either  $A$ or $E$. However, as an easy consequence of the interchange laws given above, %, \eqref{eq:dvbcondition1} to \eqref{eq:dvbcondition4}, 
the two  resulting sums and multiplications do coincide. Therefore, $C$ inherits of a well defined vector bundle structure over $M$.  

This vector bundle structure can yet be understood alternatively as follows. 
% >new
Consider the short exact sequence of vector bundles over $E$:
\begin{equation}\label{eq:coreseqalg} 0 \longrightarrow K \longrightarrow D \overset{(p,p_D)}{\longrightarrow} p_E^*A\longrightarrow 0.\end{equation}
Then, the core coincides with the pull back of $K$ to $M$ via the zero section $0^{E}: M \longrightarrow E$.  Moreover, using addition in the vector bundle $D\to A$, one gets an identification $K\simeq p_E^*C$ (see eq. \eqref{core_section} below). 

\begin{example}\label{ex:TE1}
Given a vector bundle $p_E:E \to M$, the quadruple given by  $(TE,TM,E,M)$ defines a \DVB. To understand its core, recall that there is a natural identification
\[ \nu_e: E_{p_E(e)} \to \ker(T_e p_E) \subset T_e E, \ z \mapsto \frac{d}{dt}|_{t=0}( e + t z),\]
 between a fiber of the vector bundle and its vertical tangent space at a point $e\in E$. The core of $TE$ is then given by $\nu_{0^E}(E)\simeq E$, namely, the vertical tangent space at points of the zero section of $E$. Considering the standard tangent Lie algebroid structures on $TE \to E$ and $TM \to M$, $(TE,TM,E,M)$ defines a \vba structure.
\end{example}

A double vector bundle morphism $(D,A,E,M) \to (D',A',E',M')$ is, according to Definition \ref{def:vba}, a vector bundle morphism $(\Phi,\phi): (D\to E) \to (D'\to E')$ which commutes with the underlying homogeneous structures. In particular, it also defines a vector bundle morphism $(D\to A)\to (D'\to A')$ for the other side bundle structures and, hence, it induces a morphism $\Phi|_{C} : C \to C'$ between the cores of $D$ and $D'$, respectively.

An example of a \DVB morphism is the anchor map $\rho_D:D\to TE$ of a \vba $(D,A,E,M)$. When seen as a morphism $(D\to E) \to (TE \to E)$ it covers the identity on $E$ while, when seen as a morphism $(D\to A) \to (TE \to TM)$, it covers the anchor map $\rho_A: A \longrightarrow TM$. The induced morphism between the cores is \[\partial=\rho_D|_C:C\to E,\] where $E$ is identified with the vertical bundle in $TE$ at the zero section as in Example \ref{ex:TE1}. This map is called the \textbf{core anchor} of $D$\footnote{In \cite{GM08}, the authors adopt a different convention in which the core anchor is minus the map defined here.}.

% 
% Given a \DVB $(D,A,E,M)$, the space of sections $\Gamma(E, D)$ is
% generated as a $C^{\infty}(E)$-module by two distinguished classes of
% sections, the \textit{linear} and the
% \textit{core sections} which we now describe. For a section $c: M \rightarrow C$, the corresponding \textbf{core
%     section} $\hat{c}: E \rightarrow D$ is defined as
% \begin{equation}\label{core_section}
% \hat{c}(e_m) = 0^D_{E}(e_m) +_{A} \overline{c(m)}, \,\, m \in M, \, e_m \in E_m.
% \end{equation}
% We denote the space of core sections by $\Gamma_c(E, D)$.
%  A section $\chi \in \Gamma(E,D)$ is called \textbf{linear} if $\chi: E
%   \rightarrow D$ is a vector bundle morphism covering a section $a:M\longrightarrow A$. The space of linear sections is denoted by  $\Gamma_{\ell}(E, D)$. The definition of a \vba implies that (\cite{GM08}) the following bracket conditions are satisfied:
% \begin{align*}
% [\Gamma_{\ell}(E, D), \Gamma_{\ell}(E,D)]_D &\subset \Gamma_{\ell}(E,D),\\
% [\Gamma_{\ell}(E,D), \Gamma_c(E,D)]_D &\subset \Gamma_c(E,D),\\
%  [\Gamma_c(E,D), \Gamma_c(E,D) ]_D &= 0.
% \end{align*}
% Moreover, the anchor map $\rho_D:D\to TE$ is a \DVB morphism covering the anchor map $\rho_A: A \longrightarrow TM$. Hence (see e.g. \cite{mackenzie-book}) there is an induced bundle map \[\partial:C\to E\] between core bundles (here, $E$ is identified with the vertical bundle in $TE$ at the zero section). This map is called the \textbf{core anchor} of $D$.

\begin{example}
 Given a Lie algebroid $A\to M$, the tangent functor can be used to determine a $\mathcal{VB}$-algebroid structure on $(TA,A,TM,M)$ (see, e.g., \cite{mackenzie-book}). The core is identified with $A$ and the core-anchor map is $\partial=\rho_A: A \to TM$.
\end{example}

\begin{example}
Let $A\to M$ be a vector bundle. The cotangent bundle $T^*\!A$ inherits the structure of a vector bundle over $A^*$. Moreover, if $A$ is a Lie algebroid, then $T^*\!A\to A^*$ inherits a Lie algebroid structure making $(T^*\!A,A,A^*,M)$ into a $\mathcal{VB}$-algebroid.
 The Lie algebroid structure on $T^*\!A\to A^*$ can be described in Poisson-geometric terms as follows. The Lie algebroid structure on $A$ induces a linear Poisson structure on $A^*$, therefore, its cotangent bundle $T^*\!A^*\to A^*$ is equipped with a \emph{linear} Lie algebroid structure. There exists a canonical isomorphism of double vector bundles $R:T^*\!A\to T^*\!A^*$ covering the identity of $A^*$, called \emph{reversal isomorphism} in \cite{mackenzie-book} (in fact, $R$ generalizes the classical Legendre-Fenschel transform \cite{Duf}). \ale The Lie algebroid structure on $T^*\!A$ is then defined by pulling back the one on $T^*\!A^*$ via the isomorphism $R$. The underling core is identified with $T^*\!M$ and the core-anchor map with the transpose of $\rho_A$.
\end{example}

Finally, we recall the notion of \emph{core} and \emph{linear} sections on a double vector bundle. 
Given a section $c\in \Gamma C$, the corresponding \textbf{core section} $\hat{c}: E \rightarrow D$ is defined as
 \begin{equation}\label{core_section}
 \hat{c}(e_m) = 0^D_{E}(e_m) +_{A} \overline{c(m)}, \,\, m \in M, \, e_m \in E_m.
 \end{equation}
 We denote the space of core sections by $\Gamma_c(E, D)$.
 A section $\chi \in \Gamma(E,D)$ is called \textbf{linear} if $\chi: E \rightarrow D$ defines a vector bundle morphism covering a section $a:M\longrightarrow A$. The space of linear sections is denoted by  $\Gamma_{\ell}(E, D)$. The definition of a \vba implies that the following bracket conditions are satisfied:
 \begin{align}
\left[\Gamma_{\ell}(E, D), \Gamma_{\ell}(E,D)\right]_D &\subset \Gamma_{\ell}(E,D), \nonumber \\
[\Gamma_{\ell}(E,D), \Gamma_c(E,D)]_D &\subset \Gamma_c(E,D), \nonumber \\
 [\Gamma_c(E,D), \Gamma_c(E,D) ]_D &= 0 \label{eq:bracketrelations}.
\end{align}
 This can be seen directly by noticing that the Lie algebroid isomorphism defined by $(m_\lambda,m^E_\lambda)$, for $\lambda\neq 0,$ relates a linear section $\alpha$ to itself and a core section $\hat{c}$ to the core section $\widehat{(\lambda c)}$. (In fact, the above bracket relations characterize completely the \vba structure on a given \DVB, see \cite{GM08} for details.)

%%%%%%%%%%%%%%%%%%%%%%%%%%%%%%%%%%%%%%%%%%%%%%%%%%%%%%%%%%%%%%%%%%%%%%%%%%%%%%%%%%%%%%%%%%%%%%%%%%%%%%%%%%%%%%%%%%%%%%%%%%%%
\subsection{Lie theory for $\mathcal{VB}$-groupoids and \vbas}\label{subsec:LieVB}
%%%%%%%%%%%%%%%%%%%%%%%%%%%%%%%%%%%%%%%%%%%%%%%%%%%%%%%%%%
%%%%%%%%%%%%%%%%%%%%%%%%%%%%%%%%%%%%%%%%%%%%%%%%%%%%%%%%%%%%%%%%%%%%%%%%%%%%%%%%%%%%%%%%%%%%%%%%%%%%%%%%%%%%%%%%%%%%%%%%%%%%
We summarize here the basics about Lie theory for \vbas and \vbgs which will allow us to state the integrability problem that is the main object of study of this paper. A detailed study of Lie theory in the $\mathcal{VB}$-setting can be found in \cite{BCdH}.

Given a Lie groupoid $G\rightrightarrows M$, we denote by $\LIE(G)\to M$ its Lie algebroid following the convention that the vector bundle underlying $\LIE(G)$ is $\ker(Ts)|_{1(M)}$ and that the Lie bracket comes from right invariant vector fields. The corresponding Lie functor going from the category of Lie groupoids to the category of Lie algebroids will be denoted by $\LIE$.
% \[\LIE: \Grpd \to \Algd\]
% where $\Grpd$ denotes the category of Lie groupoids and $\Algd$ the category of Lie algebroids. 
%

Not every Lie algebroid $A\to M$ is isomorphic to $\LIE(G)$ for some Lie groupoid $G$. When it is the case, we say that $A$ is \textbf{integrable} and that $G$ integrates $A$. Moreover, for an integrable Lie algebroid $A$, the ${{\bold{s}}}$-simply connected Lie groupoid integrating $A$ is unique up to isomorphism. We shall denoted by $\G(A)\rightrightarrows M$ such an ${{\bold{s}}}$-simply connected integration. (In Section \ref{sec:obstructions:integrability:algebroid}, this notation will correspond to a particular ${\bold{s}}$-simply connected integration given by the Weinstein groupoid of $A$).

Similarly, given a $\mathcal{VB}$-groupoid $(H,G,E,M)$ we obtain a $\mathcal{VB}$-algebroid structure on the quadruple $(\LIE(H),\LIE(G),E,M)$ by applying the Lie functor to the morphisms defined by $q_H:H\to G$ and $m_\lambda$ (see \cite{mackenzie-book, BCdH}). 
%In this case, we say that the $\mathcal{VB}$-groupoid $(H,G,E,M)$ {\bf integrates} the $\mathcal{VB}$-algebroid $(\LIE(H),\LIE(G),E,M)$.
We obtain a functor that we also denote by:
$$\LIE:\VBG\to \VBA,$$
where $\VBG$ and $\VBA$ denote the categories of \vbgs and \vbas, respectively.

\begin{definition}\em
A $\mathcal{VB}$-algebroid $(D,A,E,M)$ is called \textbf{integrable} if there exists a $\mathcal{VB}$-groupoid $(H,G,E,M)$ such that $\LIE(H,G,E,M)$ is isomorphic to $(D,A,E,M)$ as a \vba. In this case, we say that the $\mathcal{VB}$-groupoid $(H,G,E,M)$ {\bf integrates} the $\mathcal{VB}$-algebroid $(D,A,E,M)$.
\end{definition}

A key point for the study to be developed in this paper is the following result from \cite{BCdH}: $(D,A,E,M)$ is integrable as a \vba if and only if its top Lie algebroid structure $D\to E$ is integrable as a Lie algebroid. More precisely,

\begin{theorem}[\cite{BCdH}] \label{thm:DintegGinteg}%%%%%%%%%%%%%%%%%%%%%%%%%%%%%%%%%%%%%%%%%%%%
Let $(D,A,E,M)$ be a $\mathcal{VB}$-algebroid. If the Lie algebroid $D\to E $ is integrable, then the Lie algebroid $A\to M$ is integrable and $(\G(D),\G(A),E,M)$ admits a natural $\mathcal{VB}$-groupoid structure integrating $(D,A,E,M)$ as a \vba.
\end{theorem}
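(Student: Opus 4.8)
The plan is to reduce the integrability of the $\mathcal{VB}$-algebroid to the integrability of the top Lie algebroid $D\to E$, invoking the characterization already quoted from \cite{BCdH}, and then to produce the integrating $\mathcal{VB}$-groupoid explicitly on $(\G(D),\G(A),E,M)$. Concretely, I would first observe that the zero section $0^D_A:A\hookrightarrow D$ is an embedding of Lie algebroids over $0^E:M\hookrightarrow E$; since $D\to E$ is assumed integrable, the general theory of integrability of Lie algebroids (the functor $\G$ being well-behaved on subalgebroids, cf.\ \cite{CF03}) shows that $A$ inherits integrability, and that $\G(A)$ sits naturally inside $\G(D)$ as a wide Lie subgroupoid over $M\subset E$. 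This gives the left-hand side of the would-be $\mathcal{VB}$-groupoid diagram a well-defined Lie groupoid structure, and the $\mathbf{s}$-simply connectedness of both $\G(D)$ and $\G(A)$ guarantees uniqueness and compatibility of the structure maps.

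Next I would build the vector bundle structure of $\G(D)\to\G(A)$. The key point is that, by functoriality of $\G$, the homogeneous structure $m:\R\times D\to D$ — which by Definition \ref{def:vba} consists of Lie algebroid morphisms $m_\lambda$ covering $m^E_\lambda$ — integrates to a family of Lie groupoid morphisms $\tilde m_\lambda:\G(D)\to\G(D)$ covering $m^E_\lambda:E\to E$, again using that the source-simply-connected integration of $D$ is unique and depends functorially on $D$ (here one uses that $m^E_\lambda$ for $\lambda\neq 0$ is a diffeomorphism, and treats $\lambda=0$ by continuity / by the fact that $\G$ of the zero map is the zero-section-valued map). I would then check that $\lambda\mapsto\tilde m_\lambda$ is smooth in $\lambda$ and satisfies the axioms of a homogeneous structure (additivity will come from the corresponding additive structure on $D$ over $A$ integrating fiberwise, the scalar action from $m$), so that $\tilde m$ equips $\G(D)\to\G(A)$ with a genuine vector bundle structure. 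By construction $\tilde m_\lambda$ is a Lie groupoid morphism over $m^E_\lambda$, which is precisely the defining condition for $(\G(D),\G(A),E,M)$ to be a $\mathcal{VB}$-groupoid.

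Finally I would verify that applying $\LIE$ to this $\mathcal{VB}$-groupoid returns $(D,A,E,M)$ up to isomorphism: differentiating $q_{\G(D)}:\G(D)\to\G(A)$ recovers $p:D\to A$, differentiating $\tilde m$ recovers $m$ (since differentiation is inverse to integration on $\mathbf{s}$-simply connected objects), and the algebroid of $\G(D)\to E$ is $D\to E$ by the defining property of $\G(D)$; hence the whole diagram is recovered. The main obstacle I anticipate is not the infinitesimal bookkeeping but the \emph{smoothness in the parameter} $\lambda$ together with the behavior at $\lambda=0$: one must argue that the integrated scalar multiplications $\tilde m_\lambda$ assemble into a smooth map $\R\times\G(D)\to\G(D)$ and that $\tilde m_0$ is exactly the projection to the zero section over $\G(A)$. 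This can be handled by integrating the single Lie algebroid morphism $\mathrm{id}_\R\times m:\ T\R\times D\to T\R\times D$ (or by using the Weinstein groupoid construction of \cite{CF03}, in which $\G(D)$ is a quotient of a path space and $\tilde m_\lambda$ is induced functorially by post-composing paths with $m_\lambda$, making smoothness in $\lambda$ manifest), and by a limiting argument for $\lambda\to 0$ using that $\G$ sends the core-collapsing map to the corresponding zero section. Once this is in place, the theorem follows.
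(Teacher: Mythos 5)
Your proposal is correct and follows essentially the route the paper itself indicates (the paper defers the details to \cite{BCdH}, remarking only that the result follows from the characterization of vector bundles via homogeneous structures of \cite{GrabR}): one integrates each $m_\lambda$ to $\tilde m_\lambda$ by Lie's second theorem on the source-simply connected groupoid $\G(D)$, obtains joint smoothness in $\lambda$ from the path-space description of the Weinstein groupoid of \cite{CF03}, and reads off the vector bundle structure on $\G(D)\to\G(A)$ from the integrated homogeneous structure, checking finally that $\LIE$ recovers $(D,A,E,M)$. One minor caveat: the map $\mathrm{id}_{T\R}\times m$ is not a Lie algebroid morphism (anchor compatibility fails, since the base map $(\lambda,e)\mapsto \lambda e$ also moves in the $\R$-direction and this extra vertical term has no counterpart on the algebroid side), so of your two suggested devices for smoothness in $\lambda$ only the second one --- post-composing $D$-paths and $D$-homotopies with $m_\lambda$ on the Banach manifold of paths --- actually works, and it suffices.
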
%%%%%%%%%%%%%%%%%%%%%%%%%%%%%%%%%%%%%%%%%%%%%%%%%%%%%%%%%%%%%%%%%%%%%%%%%%%%%%%%%%%
We mention that the construction of the \vbg structure involves lifting the $\R$-action $m$ underlying $D\to A$ to $\G(D)$ by means of Lie's second theorem and showing that it fulfills the regularity condition of an homogeneous structure. In the context of the present paper, Theorem \ref{thm:DintegGinteg} will allow us to study the obstructions for the integrability of a \vba by focusing on the Lie algebroid $D\to E$.

To end this subsection, we describe two natural operations in the category of vector bundles that restrict in a straightforward manner to \vbgs and \vbas, namely, \emph{pull-backs} and (in particular) \emph{direct sums}.

 Let $(D,A,E,M)$ be a \vba and $(H,G,E,M)$ be a \vbg.
Given another \vba $(D', A,E',M)$, the sum $(D\oplus_A D', A, E\oplus_M E', M)$ inherits a natural \vba structure. Analogously, using the obvious notations, $(H\oplus_G H', G, E\oplus_M E', M)$ inherits a natural \vbg structure and  we have:
\begin{multline}
\LIE\left(H\oplus_G H', G, E\oplus_M E', M\right) \\= \left(\LIE(H)\oplus_{\LIE(G)} \LIE(H'), \LIE(G), E\oplus_M E', M\right).	
\end{multline}

More generally, given a Lie algebroid morphism $\phi:B \to A$ covering $f:N\to M$ and denoting $\phi^*D \to B$ the pull-back bundle associated to the vector bundle $D\to A$, one can show that $(\phi^*D,B,f^*E,N)$ inherits a natural \vba structure. In the groupoids category, if $F: G' \to G$ is a Lie groupoid morphism covering $f:N\to M$, one shows that $(F^*H,G',f^*E,N)$ inherits a natural \vbg structure and that
$$
\LIE\left(F^*H,G',f^*E,N\right) = \left(\LIE(F)^*\LIE(H), \LIE(G'),f^*E,N\right).
$$
 A systematic treatment of pull-backs in the categories of Lie groupoids and algebroids, together with their behavior with respect to $\LIE$, can be found in \cite{BCdH}.

%%%%%%%%%%%%%%%%%%%%%%%%%%%%%%%%%%%%%%%%%%%%%%%%%%%%%%%%%%%%%%%%%%%%%%%%%%%%%%%%%%%%%%%%%%%%%%%%%%%%%%%%%%%%%%%%%%%%%%%%%%%%%%%%%%%%%%%%%%%%
\subsection{Split \vbas and representations up to homotopy}\label{subsec:split}%%%%%%%%%%%%%%%%%%%%%%%%%%%%%%%%%%%%%%%%%%%%%%%%%%%%%%%%%%%%%%%%%%%%%%%%%%%%%%%%%%%%%%%
%%%%%%%%%%%%%%%%%%%%%%%%%%%%%%%%%%%%%%%%%%%%%%%%%%%%%%%%%%%%%%%%%%%%%%%%%%%%%%%%%%%%%%%%%%%%%%%%%%%%%%%%%%%%%%%%%%%%%%%%%%%%%%%%%%%%%%%%%%%%

Here we follow closely the reference \cite{GM08} to recall some facts about splittings of \vbas and their relation to representations up to homotopy. %One can also think of this as a different perspective on \vbas in which one sees the structure as that of the base Lie algebroid $A$ together with representation-like structure maps involving $E$ and $C$.

First, notice that given a \DVB $(D,A,E,M)$, there are three vector bundles over $M$ associated to it: the two sides $A$ and $E$ and the core $C$. Conversely, given three vector bundles $A,E,C$ over $M$, their fiber product
\[ \Dd = A\times_M E \times_M C\]
admits a double vector bundle structure with sides $A,E$ and core $C$ given by the natural set-theoretic identifications \[\Dd \simeq p_A^*E\oplus_A p_A^*C \simeq p_E^*A \oplus_E p_E^*C.\] A \DVB of this form is said to be \emph{split} and we shall use the symbol $\Dd$ to denote it.% this \DVB structure altogether. 
%Before the next definition, we recall that a morphism of double vector bundles induces vector bundle morphisms on each of the side bundles and on the cores.

\begin{definition}\label{def:splitting}
Given a \DVB $(D,A,E,M)$ with core $C$, a {\bf splitting} of $(D,A,E,M)$ is a \DVB isomorphism $h: \Dd \overset{{}_\sim}{\longrightarrow} D$ inducing the identity morphisms on $A$, $E$ and $C$. 
\end{definition}
Splittings can be shown to exist for any double vector bundle; let us review the argument given in \cite{GM08}. First, we observe that the above splittings  correspond to a special subset of splittings of the sequence \eqref{eq:coreseqalg} whose dependence on the $E$-fibers is also \emph{linear}. %The local (in $M$) existence of such linear splitings is contained in the work of 
Grabowski and Rotkiewicz \cite{GrabR} proved that these exist locally (in $M$) and \cite{GM08} proposed to infer the global existence  via a \v{C}ech cohomological argument.

Splittings also form an affine space modeled on $\Hom(A\otimes E, C)$ which, in turn, can be seen as the  group of  splittings of $\Dd$ itself acting by
\begin{equation}\label{eq:changespl0} A\times_M E \times_M C \ni (a,e,c) \mapsto (a,e,c + \sigma(a,e)), \ \sigma \in \Hom(A\otimes E, C).\end{equation}
Thus, any two splittings of a given \DVB differ by a transformation of $\sigma \in \Hom(A\otimes E,C)$ as above.

\begin{example}\label{ex:splitTE}
 Let $p_E:E \to M$ be a vector bundle and $(TE,TM,E,M)$ be the associated \DVB. Splittings of $TE$ are in one-to-one correspondence with linear $TM$-connections on $E$. Indeed, a $TM$-connection $\nabla$ on $E$ defines a horizontal lift for each $e \in E$,
\[hor^\nabla_e: T_{p_E(e)} M \to T_e E, X \mapsto \frac{d}{dt}|_{t=0} \hol^\nabla_{x(t)} e,\]
 where $x(t)$ is a curve in $M$ with velocity $X$ at $t=0$ and $\hol^\nabla_{x(t)}: E_{x(0)}\to E_{x(t)}$ is the linear holonomy defined by $\nabla$.   The tangent bundle of $E$ is thus split into horizontal and vertical parts and the induced map
\[h: TM\times_M E \times_M E \to TE, (X,e,z) \mapsto hor^\nabla_e(X) + \frac{d}{dt}|_{t=0}(e + tz),\]
defines a bijection. % where we denoted $\nu_e:E_{p_E(e)} \overset{\sim}{\to} \ker{T_ep_E}\subset T_eE$ the natural identification described in Example \ref{ex:TE1}.
The fact that $hor^\nabla$ is linear implies that $h$ defines a \DVB morphism (i.e. it is locally given by a transformation of the form \eqref{eq:changespl0}). It evidently induces the identity on side bundles and, to check that it is also the identity on the cores, one observes that $e=0^E$ and $X=0^{TM}$ implies $hor^\nabla_e(X)=0_E^{TE}$ and recalls the identification of $E$ as the core of $TE$ (c.f. Example \ref{ex:TE1}). Conversely, given $h$ one can define an underlying $TM$-connection $\nabla$ by defining $h(X,e,0^E)$ to be the horizontal lift $hor^\nabla_e(X)$ of $X\in TM$ and the fact that $h$ preserves the \DVB structure implies that the induced holonomy (and, hence $\nabla$) is linear.
\end{example}

Given a \vba $(D,A,E,M)$ and a splitting $h$, there is an induced \vba structure $(\rho_{D,h},[\ ,\ ]_{D,h})$ on $\Dd$ which we shall call \emph{split}. We now describe such split \vba structures and relate them to representations up to homotopy.

The first observation is that the induced core-anchor $\partial:C \to E$ on $\Dd$ is the same as the one for $D$, since $h$ preserves cores. To characterize the rest of the \vba structure, we observe that there are natural maps of sections
\begin{align*}
\Gamma(A) \to&\  \Gamma(E,\Dd), \ \alpha \mapsto \alpha^l, \ \alpha^l(e) = \left(\alpha \circ p_E(e), e , 0^C\right) \\
\Gamma (C)  
\to &\  \Gamma(E, \Dd), \ c \mapsto \hat{c}, \ \hat{c}(e) = \left(0^A, e , c\circ p_E(e)\right),
\end{align*}
defining linear and core sections of $\Dd$, respectively.
Following \cite{GM08} further, the formulas %in v3, we changed the global sign of omega below, with this, the following thm is correct
\begin{eqnarray}
 \widehat{\nabla^C_\alpha c} &= &[\alpha^l, \hat{c}]_{D,h}, \nonumber \\
 \langle \xi, \nabla_\alpha^E e\rangle &=& \rho_A(\alpha)\langle \xi, e \rangle - \langle \mathcal{L}_{\rho_{D,h}(\alpha^l)} \xi, e\rangle, \nonumber \\
 \widehat{\omega(\alpha,\beta)(e)}&=& [\alpha^l,\beta^l]_{D,h}(e)-[\alpha,\beta]_A^l(e) , \label{curvaturehorlift}
\end{eqnarray}
%\marginpar{* Check def $\pm\omega$}
define $A$-connections $\nabla^C$ and $\nabla^E$ on $C$ and $E$, respectively, and a curvature-like object $\omega \in \Gamma(\wedge^2A^*)\otimes \mathrm{\Hom}(E,C)$, where $\alpha \in \Gamma (A)$, $e \in \Gamma (E)$, $c\in \Gamma (C)$ and $\xi \in \Gamma (E^*)$ is also seen as a fiberwise-linear function on $E$. Notice that the formulas above make sense because of the bracket relations \eqref{eq:bracketrelations}.

We will refer to $(\nabla^E,\nabla^C,\omega)$ as the {\bf connection data} defined by the splitting $h$ of $(D,A,E,M)$. Recall that an {\bf $A$-connection}  on a vector bundle $V\to M$ is an $\R$-bilinear map $\nabla: \Gamma(A) \times \Gamma(V) \to \Gamma(V)$ satisfying $\nabla_{f\alpha} s = f \nabla_\alpha s$ and $\nabla_\alpha(fs) =  f \nabla_\alpha s + \mathcal{L}_{\rho_A(\alpha)}(f) s$, for all $\alpha \in \Gamma(A), s\in \Gamma(V),$ and $f\in C^\infty(M)$. An $A$-connection defines an operator $d_\nabla$ on $V$-valued $A$-forms $\Omega^\bullet(A;V)=\Gamma(\wedge^\bullet A^* \otimes  V)$ by the formula
\begin{eqnarray}
 d_\nabla \omega (\alpha_1, \dots,\alpha_{k+1})&=&\sum_{i<j} (-1)^{i+j} \omega([\alpha_i,\alpha_j], \dots, \hat{\alpha}_i,\dots, \hat{\alpha}_j,\dots,\alpha_{k+1}) + \nonumber\\
 & & + \sum_i (-1)^{i+1} \nabla_{\alpha_i} \omega(\alpha_1,\dots,\hat{\alpha}_i,\dots,\alpha_{k+1}).\label{eq:dnabla}
 \end{eqnarray}
This operator is of degree $1$ and a derivation for the natural $\Omega(A)$-module structure,
\[d_\nabla(\alpha s)=(d_A\alpha)s +(-1)^p \alpha (d_\nabla s),\]
where $d_A$ denotes the Chevalley-Eilenberg differential on $\Omega^\bullet(A)$, $\alpha \in \Omega^p(A)$ and $s\in \Gamma (V)$. Moreover, the curvature of $\nabla$ is $R_\nabla\equiv d_\nabla^2$ so that the $A$-connection is \emph{flat} iff $d_\nabla^2=0$.

It turns out that the \vba structure on $\Dd$ is completely characterized by the connection data together with the core-anchor $\partial : C \to E$.

\begin{theorem}{(\cite{GM08})}\label{lma:conn data} Let $A\to M$ be a Lie algebroid and $E, \ C$ vector bundles over $M$. The  formulas \eqref{curvaturehorlift} define a $1:1$ correspondence between split \vba structures $(\Dd,A,E,M)$ with core anchor $\partial:C \to E$ and quadruples $(\partial, \nabla^E,\nabla^C,\omega)$ as above satisfying the following relations: 
\begin{align*}
\partial \circ \nabla^C &= \nabla^E \circ \partial,\\
-\omega\circ \partial &= R_{\nabla^C}, \\
-\partial \circ \omega &= R_{\nabla^E}, \\
d_{\nabla^{\scriptscriptstyle \Hom(E, C)}}\omega &=0,
\end{align*}
  where $R_{\nabla^{E}}$ and $R_{\nabla^{C}}$ denote the curvatures of $\nabla^{E}$ and $\nabla^{C}$ respectively\footnote{Notice additional minus signs with respect to the formulas in \cite{GM08} coming from different sign conventions in the definitions 
 \eqref{curvaturehorlift} and of $\partial$. The consistency of the second and third equations in Thm. \ref{lma:conn data} with our present definitions can be directly verified computing the Jacobi identity on linear-linear-core sections and the identity saying that $\rho_{D,h}$ preserves brackets for two linear sections, respectively. We thank the referee for this observation.  },  while $\nabla^{\scriptscriptstyle \Hom(E, C)}$ denotes the $A$-connection on $\Hom(E,C)$ naturally induced by $\nabla^E$ and $\nabla^C$.
\end{theorem}

Let us move on to the interpretation of the split \vba structure in representation-theoretic terms. Recall that a \emph{representation} of $A$ on a vector bundle $V$ is a \emph{flat} $A$-connection $\nabla$ on $V$. From Theorem  \ref{lma:conn data}, we see that the connection data will not define representations of $A$ on $E$ and $C$ in general. Nevertheless, it can be understood as a \emph{representation up to homotopy} of $A$ on the $2$-term complex \[C \overset{\partial}{\to} E,\]
where the curvature of the connections $(\nabla^E,\nabla^C)$ is controlled by $\omega$ and $\partial$  (see the Section \ref{sub:comp:obs} for an interpretation in terms of holonomies and chain homotopies.)

\begin{remark}[Induced representation in cohomology]\label{rmk:repincohom}
 According to a general principle, a representation \emph{up to homotopy} on a chain complex should induce an honest representation in cohomology. Indeed, notice that the cohomology of $\partial:C \to E$ seen as a $2$-term complex is $\ker \partial \oplus \coker \partial$. Whenever these define smooth bundles, the relations in Theorem \ref{lma:conn data} imply that the connections $\nabla^C$ and $\nabla^E$ induce $A$-connections $\nabla^{\ker \partial}$ and $\nabla^{\coker \partial}$ on  $\ker \partial$ and $\coker \partial$, respectively, and that these are \emph{flat}. 
\end{remark}

To introduce a formal definition of a representation up to homotopy, consider  $\E = C\oplus E$ the graded vector bundle with $C$ in degree $-1$ and $E$ in degree $0$. In analogy with an ordinary connection, an {\bf $A$-superconnection} on $\E$ is an operator $\D: \Omega^\bullet(A) \otimes \Gamma(\E) \to \Omega^\bullet(A) \otimes \Gamma(\E)$ such that it is of degree $1$ with respect to the total grading and that it is a graded derivation of the natural $\Omega^\bullet(A)$-module structure. The superconnection is said to be {\bf flat} if $\D^2=0$. 
A (2-term) {\bf representation up to homotopy} of $A$ on $\E$ is a flat $A$-superconnection on $\E$ (see \cite{AriasCrainic,GM08}).

Such a  differential $\D$ can be decomposed as
\[ \D = \D_0 + \D_1 + \D_2,\]
where, setting $\E=\oplus_l \E_l$, $\D_i$ sends $\Omega^k(A)\otimes \Gamma(\E_l)$ to $\Omega^{k+i}(A) \otimes \Gamma(\E_{l-i+1})$. Following \cite{GM08}, $\D_0$ is related to the core-anchor $\partial$, $\D_1$ to $d_{\nabla^E}$ and $d_{\nabla^C}$, and $\D_2$ to multiplication by $\omega$ as follows: 
\begin{equation}\label{eq:Drep} \D(c + e) = \partial c + d_{\nabla^C} c + d_{\nabla^E} e + \omega(e), \ \ c\in \Gamma (C), e \in \Gamma (E).\end{equation}
The relations given in Theorem \ref{lma:conn data} are equivalent to the flatness condition $\D^2=0$. 

In this way, $2$-term representations up to homotopy are in $1:1$ correspondence with split \vba structures $(\partial,\nabla^E,\nabla^C,\omega)$ on $\Dd$. We shall come back to representations up to homotopy in Section \ref{sec:reputh}.

\begin{remark}[Change of splitting]\label{rmk:changesplit}
 Recall that changes of splitting of a given \DVB correspond to elements $\sigma \in \Hom(A\otimes E,C)$ acting as in the equation \eqref{eq:changespl0}. Given a \vba structure on $\Dd$ characterized by $(\partial,\nabla^E,\nabla^C,\omega)$, a change of splitting $\sigma$ transforms the data into $(\tilde{\partial},\widetilde{\nabla}^E, \widetilde{\nabla}^C, \tilde{\omega})$ where:
 \begin{align}
  \tilde{\partial}=\partial&, \ \ \ \widetilde{\nabla}^C_\alpha - \nabla^C_\alpha =- \sigma_\alpha \circ \partial, \ \ \ \widetilde{\nabla}^E_\alpha - \nabla^E_\alpha =- \partial\circ \sigma_\alpha \nonumber  \\
  \tilde{\omega}_{\alpha,\beta}&= \omega_{\alpha,\beta} - \sigma_{[\alpha,\beta]_A} + \sigma_\alpha \nabla^E_\beta - \sigma_\beta \nabla^E_\alpha +
  \nabla^C_\alpha \sigma_\beta - \nabla^C_\beta \sigma_\alpha - \sigma_\alpha \partial \sigma_\beta + \sigma_\beta \partial \sigma_\alpha, \label{eq:changespl1}
 \end{align}
where $\alpha,\beta \in \Gamma (A)$ and $\sigma_\alpha, \sigma_\beta$ are seen as sections of $\Hom(E,C)$. Details can be found in \cite[Thm. 4.14]{GM08}.
\end{remark}

\begin{remark}[The Chevalley-Eilenberg differential of $D$]\label{rmk:CEdiff}
 Every Lie algebroid $A$ has a differential induced on its space of forms $\Gamma (\wedge^\bullet A^*)$. In the case of a \vba $(D,A,E,M)$, we denote by $D^*_E$ the dual with respect to the vector bundle $D\to E$ and then we get a differential $d_D$ on $\Gamma(E, \wedge^\bullet D_E^*)$. Suppose that $D=\Dd$ is split. Then, $D^*_E = A^*\times_M E\times_M C^*$ and we have the following identification:
 \[\Gamma(E, \wedge^\bullet D_E^*) = \Gamma (\wedge^\bullet A^*) \otimes C^\infty(E) \otimes \Gamma (\wedge^\bullet C^*).\]
 Following \cite{GM08}, the subspace $\Gamma (\wedge^\bullet A^*) \otimes \Gamma (E^*) \otimes \Gamma (C^*)\subset \Gamma(E, \wedge^\bullet D_E^*)$ made of linear functions on $E$ and degree $1$ forms on $C$ is invariant under $d_D$. This subcomplex is in duality with $\Omega^\bullet(A) \otimes \Gamma(\E)$ as $\Omega^\bullet(A)$-modules (notice that $\E^*=E^*\oplus C^*$) and, moreover, $d_D$ is the dual of the superconnection $\D$:
 \[ \langle \D \omega, \eta\rangle = d_A\langle \omega,\eta\rangle -(-1)^{|\omega|} \langle \omega, d_D \eta\rangle,\]
 for $\omega \in \Omega^\bullet(A)\otimes \Gamma(\E)$ and $\eta \in \Omega^\bullet(A)\otimes \Gamma(\E^*)$.
\end{remark}

% \begin{remark}
% The notion of morphism between representations up to homotopy can be found in \cite{AriasCrainic} (see also \cite{GM08}). 
% \end{remark}

%%%%%%%%%%%%%%%%%%%%%%%%%%%%%%%%%%%%%%%%%%%%%%%%%%%%%%%%%%%%%%%%%%%%%%%%%%%%%%%%%%%%%%%%%%%%%%%%%%%%%%%%%%%%%%%%%%%%%%%%%%%%
%%%%%%%%%%%%%%%%%%%%%%%%%%%%%%%%%%%%%%%%%%%%%%%%%%%%%%%%%%%%%%%%%%%%%%%%%%%%%%%%%%%%%%%%%%%%%%%%%%%%%%%%%%%%%%%%%%%%%%%%%%%%
\section{Integrability of \vbas}\label{sec:integrability}
%%%%%%%%%%%%%%%%%%%%%%%%%%%%%%%%%%%%%%%%%%%%%%%%%%%%%%%%%%%%%%%%%%%%%%%%%%%%%%%%%%%%
%%%%%%%%%%%%%%%%%%%%%%%%%%%%%%%%%%%%%%%%%%%%%%%%%%%%%%%%%%%%%%%%%%%%%%%%%%%%%%%%%%%%%%%%%%%%%%%%%%%%%%%%%%%%%%%%%%%%%%%%%%%%

In this section, we explain how the general theory of integrability for Lie algebroids of \cite{CF03} specializes to the case of \vbas. We also make use of the structure of regular \vbas introduced in \cite{GM08} to provide integral expressions for the obstructions to the integrability.
%In this section, we characterize the obstructions for a \vba to be integrable by resorting to the general theory of integrability of Lie algebroids (\cite{CF03}).

%%%%%%%%%%%%%%%%%%%%%%%%%%%%%%%%%%%%%%%%%%%%%%%%%%%%%%%%%%%%%%%%%%%%%%%%%%%%%%%%%%%%%%%%%%%%%%%%%%%%%%%%%%%%%%%%%%%%%%%%%%%%
\subsection{General theory of obstructions}\label{sec:obstructions:integrability:algebroid}
%%%%%%%%%%%%%%%%%%%%%%%%%%%%%%%%%%%%%%%%%%%%%%%%%%%%%%%%%%%%%%%%%%%%%%%%%%%%%%%%
%%%%%%%%%%%%%%%%%%%%%%%%%%%%%%%%%%%%%%%%%%%%%%%%%%%%%%%%%%%%%%%%%%%%%%%%%%%%%%%%%%%%%%%%%%%%%%%%%%%%%%%%%%%%%%%%%%%%%%%%%%%%

Here, we briefly recall the construction of the Weinstein groupoid of a Lie algebroid and of the monodromy groups that control its smoothness. The reader is referred to \cite{CF03} for a full exposition.

Let $A$ be a Lie algebroid over $M$, with anchor $\rho_A:A\to TM$. We consider the interval $I=[0,1]$ and denote by $t$ the standard parametrization of $I$. Any vector bundle map $TI\to A$ can be written as $a dt$, with $a: I\to A$ a path covering $\gamma:=p_A\circ a:I\to M$ on the base manifold. Such a map  $adt:TI \to A$ defines a Lie algebroid morphism iff \[\rho_A\circ a=\frac{d}{dt}\gamma,\] in which case $a$ is called an {\bf $A$-path}. We will denote by $P(A)$ the set of all $A$-paths.

Let us now consider a smooth family of $A$-paths $a^{s}:I\to A$, parametrized by $s\in I$, such that the base end-points $\gamma^s(0)$ and $\gamma^s(1)$ are fixed.  In that case, there exists \cite{CF03} a unique family of $A$-paths $b^tds:TI \to A$, with $b^{0}(s)=0$ and such that $adt+bds:TI^2\to A$ is a Lie algebroid morphism. (Here $I^2:=I \times I$ is parametrized by $(t,s)$.)
The family $a^{s}$ is called an {\bf $A$-homotopy} between the $A$-paths $a^{0} dt$ and $a^{1} dt$ if the homotopy condition $b^{1}(s)=0, \forall s\in I$ is satisfied. %Since $b$ is entirely determined by $a$, 
One may also define an $A$-homotopy directly as a Lie algebroid morphism \[adt+bds:TI^2\to A\] satisfying the boundary conditions: $b|_{t=0,1}=0$.

%\ale \marginpar{Removed a remark here}
% \begin{remark}
% The geometrical meaning of $A$-paths and $A$ homotopies is the following. In the case where $A$ integrates to a Lie groupoid, any $A$-path $adt$ can be \emph{integrated} to a smooth path $g:I \to G$ lying in the $s$-fiber over $\gamma(0)$, and with initial point the identity at $\gamma(0)$. Conversely, any such path $g$ can be \emph{differentiated} so as to obtain an $A$-path, by letting $a(t):=dR_{g{-1}(t)}(\frac{d}{dt}{g}(t))$. 
% 
% When considering a family of $A$-paths $a^sdt$ with fixed end points, one obtains a smooth family $g^s$ of paths in the $\s$-fibers of $G$. By construction, $b^tds$ measures the variations of $g$ in the $s$ parameter, namely: $b^t(s)=dR_{g^s{-1}(t)}(\frac{d}{ds}{g^s}(t))$. This means that $g^s(1)\in G$ is independent of $s$ iff and only the homotopy condition is satisfied, and suggests that $G$ can be reconstructed, up to a covering, out of $A$-paths and $A$-homotopies. 
% \end{remark}

Given an arbitrary (not necessarily integrable) Lie algebroid, one can always consider the Weinstein (or fundamental) groupoid associated to $A$, which is defined as:
$$
\G(A) := P(A)\, /\! \sim
$$
where $\sim$ denotes the equivalence relation defined by $A$-homotopies. The set $P(A)$ can be endowed with the structure of a Banach manifold from which $\G(A)$ inherits a topology. The concatenation of $A$-paths endows $\G(A)$ with the further structure of a topological groupoid over $M$, which is source simply connected by construction. The source and target maps correspond to the initial and final points, respectively, of the base map $\gamma:I \to M$ of the $A$-path.

The Weinstein groupoid is universal in the sense that a Lie algebroid admits a smooth integration $G$ (i.e. $\LIE(G)\simeq A$) if and only if $\G(A)$ is a Lie (hence smooth) groupoid. In this case, there is a covering map $\G(A)\to  G$, and $\LIE(\G(A))\simeq A$ as well.
% The problem of integrability of a Lie algebroid asks whether $\G(A)$ inherits the structure of a \emph{smooth} manifold turning it into a Lie groupoid integrating $A$.

%\begin{example}
%Let $\g$ be a Lie algebra seen as a Lie algebroid over a point. In this case, $G(\g)$ always admits a quotient smooth structure turning it into a simply connected Lie group integrating $\g$. See \cite{CF03} for details.\comment{oli: seriously ?}
%\end{example}

As explained in \cite{CF03}, integrability of a Lie algebroid $A\to M$ is controlled by its \textbf{monodromy groups}. Given a point $x \in M$, we denote by $\g^A_x =\ker(\rho_A|_x)$ the isotropy Lie algebra at $x\in M$. The monodromy group at $x$ consists of all elements $a\in \ker(\rho_A|_x)$ which, considered as constant $A$-paths, are $A$-homotopic to a trivial path, that is: 
 $$\Mon(A)_x:=\left\{a\in \g^A_x : \ a \sim 0_x \right\}\subset  \g^A_x ,$$  %\underset{{}^A}{\sim}
where both $a,\,0_x \in P(A)$ denote constant $A$-paths. The union of the monodromy groups over $x\in M$ is usually denoted by $\Mon(A)\subset A$. The relevance of the monodromy groups to the integrability problem is given by the theorem below.
%We extract the following result from \cite{CF03} to be quoted later.

\begin{theorem}[\cite{CF03}] Let $p_A:A\to M$ be a Lie algebroid and $\G(A)$ its Weinstein groupoid. The following assertions are equivalent:
\begin{enumerate}[i)]
\item $A$ is integrable.
\item $\G(A)$ is smooth.
\item Any sequence $(a_n) \subset \Mon(A)$ converging to a trivial element $0_x$ consists of trivial elements for $n$ big enough.
\end{enumerate}
\end{theorem}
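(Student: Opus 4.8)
The plan is to treat the equivalence $(i)\Leftrightarrow(ii)$ — which is really the universal property of the Weinstein groupoid already recalled above — quickly, and to concentrate on the substantive equivalence $(ii)\Leftrightarrow(iii)$, which lives in the infinite-dimensional geometry of the $A$-path space. For $(ii)\Rightarrow(i)$ there is essentially nothing to do: concatenation of $A$-paths makes $\G(A)$ into a topological groupoid over $M$ with $\LIE(\G(A))=A$ by construction, so if $\G(A)$ is smooth it is a Lie groupoid integrating $A$. For $(i)\Rightarrow(ii)$, given a Lie groupoid $G$ with $\LIE(G)\cong A$, one integrates each $A$-path to a $G$-path issued from a unit and takes its endpoint; this descends to $A$-homotopy classes, yields a groupoid morphism $\G(A)\to G$ identifying $\G(A)$ with the source-simply-connected cover of $G$, and pulling back the manifold structure along this covering makes $\G(A)$ smooth.

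For $(ii)\Leftrightarrow(iii)$ I would first set up the standard picture. Equip $P(A)$ with a Banach manifold structure (say, $A$-paths of a fixed Sobolev class) and realize the relation $\sim$ as the partition into leaves of a foliation $\mathcal F$ of finite codimension on $P(A)$, whose leaves are the orbits of a local action of the ``group of $A$-homotopies'' obtained by integrating time-dependent gauge data along paths. Then $\G(A)=P(A)/\mathcal F$ with the quotient topology and the concatenation groupoid structure, and $\G(A)$ is smooth precisely when $\mathcal F$ is \emph{simple} near every $A$-path, i.e.\ when its leaves are locally the fibres of a submersion onto a finite-dimensional manifold.

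The crux is to match the obstruction to simplicity of $\mathcal F$ with the failure of $(iii)$. I would perform a slice/normal-form analysis of $\mathcal F$ near a constant $A$-path $a$ over a point $x$: on a transversal through $a$, the germ of the holonomy of the leaf through $a$ is governed by a linearized holonomy which, restricted to the directions tangent to the isotropy Lie algebra $\g^A_x$, exponentiates precisely the monodromy group $\Mon(A)_x\subset\g^A_x$. Hence the leaf through $a$ is embedded, and $\mathcal F$ is simple near $a$, exactly when this holonomy is suitably discrete; imposing this uniformly over base points near $x$ — which is exactly what is needed for the quotient to be Hausdorff and for nearby source-fibres of $\G(A)$ to glue smoothly — is precisely the statement that no sequence in $\Mon(A)$ can converge to a nontrivial limit at $0_x$, i.e.\ condition $(iii)$. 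Conversely, if $\G(A)$ is smooth then each isotropy group $\G(A)(x,x)$ is a Lie group whose exponential-type map from $\g^A_x$ has kernel $\Mon(A)_x$, and smoothness forces these kernels to be locally uniformly discrete, which is again $(iii)$.

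The main obstacle is precisely this last slice argument near non-regular $A$-paths, where the isotropy Lie algebra jumps, together with the two analytic inputs it rests on: that $P(A)$ is genuinely a Banach manifold, and that $A$-homotopy organizes it into a finite-codimension foliation with the expected leaves. Pinning down the normal form of $\mathcal F$ near such a path, and reading off that the sole obstruction to the quotient being a smooth Hausdorff manifold is the uniform discreteness of $\Mon(A)$ near the zero section, is where essentially all of the work of \cite{CF03} is concentrated; the remaining implications are comparatively formal.
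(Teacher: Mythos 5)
This theorem is not proved in the paper at all: it is recalled verbatim from \cite{CF03}, so the only meaningful benchmark is the Crainic--Fernandes argument itself. Measured against that, your outline reproduces the correct architecture: $(i)\Leftrightarrow(ii)$ via the universal property of $\G(A)$ (integrating $A$-paths in a Lie groupoid $G$ to obtain a covering $\G(A)\to G$), and $(ii)\Leftrightarrow(iii)$ via the Banach manifold of $A$-paths, the finite-codimension foliation by $A$-homotopy classes, and the identification of the obstruction to the leaf space being smooth with the behaviour of the monodromy groups. As a roadmap this is faithful to \cite{CF03}.

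But as a proof it has a genuine gap, and you name it yourself: the entire substance of $(ii)\Leftrightarrow(iii)$ lies in the step you defer --- constructing the slice/normal form for the homotopy foliation (in \cite{CF03} this is done with an exponential map built from a connection, giving explicit transverse coordinates around an arbitrary, not just constant, $A$-path), showing the leaves are the orbits of the gauge flows, and then proving both that locally uniform discreteness of $\Mon(A)$ suffices to endow the quotient with a smooth structure and that it is necessary. Writing that this "is where essentially all of the work of \cite{CF03} is concentrated" concedes that the decisive content is asserted rather than established, so what you have is a proof sketch, not a proof. Two further points need repair even at the sketch level: (a) the link you draw between uniform discreteness and Hausdorffness of the quotient is misleading --- the Weinstein groupoid of an integrable algebroid may well be non-Hausdorff; condition $(iii)$ governs smoothness (with Hausdorff $\s$-fibres), not global Hausdorffness of $\G(A)$; (b) in the converse direction, "smoothness forces the kernels to be locally uniformly discrete" needs an actual argument: one must show that the map $\g^A_x\to\G(A)$ given by constant $A$-paths is, near $0$, a restriction of an exponential-type map that is injective on a neighbourhood of the zero section \emph{uniformly} in $x$; pointwise discreteness of each $\Mon(A)_x$ is strictly weaker than $(iii)$, and it is exactly this uniformity that the slice analysis is designed to deliver.
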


\subsection{Obstructions to the integration of \vbas}
We now turn to the problem of integrability of a \vba: given a \vba $(D,A,E,M)$ we address the problem of the existence of a $\mathcal{VB}$-groupoid $(H,G,E,M)$ integrating it.

As a first approximation, we remind the reader that $(D,A,E,M)$ is integrable if $D$ is integrable as a mere Lie algebroid over $E$. Indeed, if $D$ is integrable, then Theorem \ref{thm:DintegGinteg} guarantees that the $s$-simply connected Lie groupoid integrating $D$ comes canonically equipped with the structure of  a $\mathcal{VB}$-groupoid integrating $(D,A,E,M)$.
 
%, since  the $s$-simply connected Lie groupoid integrating $D$ comes canonically equipped with a structure of $\mathcal{VB}$-groupoid inherited from the \vba structure of $D$.

 The next observation is that for $D$ to be integrable, $A$ needs to be integrable as well.
 
\begin{proposition}\label{prop:Aint} Let $(D,A,E,M)$ be a \vba. If $D$ is integrable then $A$ is integrable.
\end{proposition}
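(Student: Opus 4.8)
The plan is to use the fact that $A$ embeds as a Lie subalgebroid of $D$ and that the monodromy-group criterion of Crainic--Fernandes is compatible with this embedding. More precisely, recall that $A \to M$ sits inside $D \to E$ via the zero section $0^D_A : A \to D$, and along the image of the base zero section $0^E : M \to E$ this realizes $A$ as a wide Lie subalgebroid of $D|_{0^E(M)}$, with the anchor $\rho_A$ being (under the identifications) a restriction of $\rho_D$. In particular the isotropy Lie algebra $\g^A_x = \ker \rho_A|_x$ is a Lie subalgebra of the isotropy $\g^D_{0^E(x)} = \ker \rho_D|_{0^E(x)}$.

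First I would check that an $A$-path $a : TI \to A$ composes with $0^D_A$ to give a $D$-path $0^D_A \circ a : TI \to D$, and likewise that an $A$-homotopy composes with $0^D_A$ to give a $D$-homotopy; this is immediate because $0^D_A$ is a Lie algebroid morphism. Consequently, if a constant $A$-path at $\xi \in \g^A_x$ is $A$-homotopic to the trivial path $0_x$, then the corresponding constant $D$-path at $0^D_A(\xi)$ is $D$-homotopic to the trivial $D$-path at $0^D_{E}(0^E(x))$. This gives the inclusion $0^D_A(\Mon(A)_x) \subseteq \Mon(D)_{0^E(x)}$, viewing $\Mon(A) \subset A \hookrightarrow D$.

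Now suppose $D$ is integrable. By the Crainic--Fernandes theorem (as recalled in the excerpt), any sequence in $\Mon(D)$ converging to a trivial element is eventually trivial; this is a condition about the monodromy subgroups being ``locally uniformly discrete'' near the zero section. Given a sequence $(\xi_n) \subset \Mon(A)_{x_n}$ with $\xi_n \to 0_x$, its image $0^D_A(\xi_n)$ is a sequence in $\Mon(D)$ converging to the trivial element $0^D_E(0^E(x))$, hence $0^D_A(\xi_n)$ is trivial in $\Mon(D)$ for $n$ large. It remains to deduce that $\xi_n$ is then trivial in $\Mon(A)$: this follows because a $D$-homotopy contracting $0^D_A(\xi_n)$ to the trivial $D$-path can be pushed down along $p : D \to A$ (a Lie algebroid morphism) to an $A$-homotopy contracting $\xi_n = p(0^D_A(\xi_n))$ to $0_{x_n}$; one only needs that $p \circ 0^D_A = \id_A$ and that $p$ sends the relevant boundary conditions to boundary conditions, which is routine. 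Hence $\xi_n$ is eventually trivial, and by the criterion $A$ is integrable.

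The main obstacle I anticipate is the careful bookkeeping of base points: the natural subalgebroid picture lives over $0^E(M) \subset E$ rather than over $M$ directly, so one must be a little careful that ``constant $D$-path at $0^D_A(\xi)$'' is literally the object appearing in $\Mon(D)_{0^E(x)}$ and that the $D$-homotopy one produces genuinely has fixed end-points in the sense of the definition ($h|_{Te_v(0)} = h|_{Te_v(1)} = 0$). A clean way to avoid any of this is the alternative, shorter route: restrict $D$ to the zero section to obtain the \vba $(D|_{0^E(M)}, A, 0^E(M), M)$ and note $A$ is a retract of $D|_{0^E(M)}$ in the category of Lie algebroids (via $0^D_A$ and $p$); since integrability of a Lie algebroid is inherited by retracts — an $s$-simply connected integration of $D|_{0^E(M)}$ maps onto one of $A$ — and $D|_{0^E(M)}$ is integrable whenever $D$ is (restriction to a closed embedded subset, or invoke Theorem \ref{thm:DintegGinteg}), we conclude. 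I would present the monodromy-group argument as the main proof since it is the most self-contained given what has been recalled, and remark on the retract argument as an alternative.
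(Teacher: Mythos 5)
Your main argument is correct, and it reaches the conclusion by a more hands-on route than the paper. The paper's proof is a one-liner: the definition of a \vba implies that the zero section $0^D_A\colon A\hookrightarrow D$ is a Lie algebroid morphism, so $A$ is a Lie subalgebroid of $D$, and then one invokes the standard (there unproved) fact that Lie subalgebroids of integrable Lie algebroids are integrable. You instead verify the needed instance of that fact directly with the Crainic--Fernandes monodromy criterion: pushing $A$-homotopies forward along $0^D_A$ gives $0^D_A(\Mon(A)_x)\subset \Mon(D)_{0^E(x)}$, and discreteness of $\Mon(D)$ near the zero elements then forces the same for $\Mon(A)$. This is in effect the paper's own Lemma \ref{lma: gen int} applied to the injective morphism $0^D_A\colon A\to D$ (so that $\Mon(A)\cap\ker 0^D_A=\{0\}$ holds trivially), and what it buys is self-containedness: no external subalgebroid-integrability result is needed. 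Two caveats. First, your last step is muddled: ``trivial element'' in the criterion means a zero vector of $D\to E$, so pushing a $D$-homotopy down along $p$ only re-proves that $\xi_n\in\Mon(A)$, which you already knew. What you actually need---and what does follow from $p\circ 0^D_A=\id_A$ together with the fact that the \DVB morphism $p$ sends zero vectors of $D\to E$ to zero vectors of $A\to M$---is that $0^D_A(\xi_n)$ being a zero vector forces $\xi_n=0_{x_n}$; with that substitution the step is immediate. Second, the proposed shortcut via the restriction $(D|_{0^E(M)},A,0^E(M),M)$ does not work as stated: the zero section $0^E(M)\subset E$ is not an invariant submanifold of the Lie algebroid $D\to E$ unless the core anchor vanishes (core sections are sent by $\rho_D$ to vertical translations by $\d c$), so $D|_{0^E(M)}$ carries no Lie algebroid structure in general. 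The retract idea itself is fine, but run it with $D$ directly: $p\circ 0^D_A=\id_A$ already exhibits $A$ as a retract of $D$ in the category of Lie algebroids, and then your monodromy argument (or Lemma \ref{lma: gen int}) applies with no restriction at all.
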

\begin{proof}
It follows from the definition of a \vba that the zero section $0^D_A:A\hookrightarrow D$ defines a Lie algebroid morphism. This makes $A$ into a Lie subalgebroid of $D$. In particular, $A$ is integrable whenever $D$ is integrable. 
\end{proof}

It is important to observe that the converse of Proposition \ref{prop:Aint} does not hold in general (see Example \ref{ex:non:int} below). In the sequel, we shall always assume that $A$ is integrable, and then we find obstructions for $D$ being integrable. We start with a general observation regarding the integrability of Lie algebroids.

\begin{lemma}\label{lma: gen int}
Let $A_1\to M_1$ and $A_2\to M_2$ be Lie algebroids, and $\phi: A_1 \to A_2$ a Lie algebroid morphism. If $A_2$ is integrable and $\
\Mon(A_1)_{x_1} \cap \ker(\phi|_{x_1}) =\{ 0^{A_1}_{x_1} \}$ for all $x_1 \in M_1$, then $A_1$ is integrable.
\end{lemma}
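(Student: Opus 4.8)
The plan is to use the monodromy criterion from the Crainic–Fernandes theorem recalled above. Since $A_2$ is integrable, its monodromy groups $\Mon(A_2)_{x_2}$ are locally uniformly discrete — more precisely, any sequence in $\Mon(A_2)$ converging to a trivial element $0_{x_2}$ is eventually trivial. I want to transfer this discreteness property along $\phi$ to conclude the analogous statement for $\Mon(A_1)$, and then invoke the theorem in the reverse direction to get integrability of $A_1$.

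First I would establish the key naturality fact: a Lie algebroid morphism $\phi: A_1 \to A_2$ sends $A_1$-paths to $A_2$-paths and $A_1$-homotopies to $A_2$-homotopies, hence induces a morphism of Weinstein groupoids $\G(\phi): \G(A_1) \to \G(A_2)$ covering the base map $f: M_1 \to M_2$. Restricting to isotropy, $\phi$ maps $\g^{A_1}_{x_1}$ into $\g^{A_2}_{f(x_1)}$ (since $\rho_{A_2}\circ\phi = Tf \circ \rho_{A_1}$), and because $\phi$ carries the constant $A_1$-path at $a$ to the constant $A_2$-path at $\phi(a)$ and respects the homotopy relation, we get $\phi\bigl(\Mon(A_1)_{x_1}\bigr) \subset \Mon(A_2)_{f(x_1)}$.

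Now I would run the sequence criterion. Let $(a_n) \subset \Mon(A_1)$ with $a_n \to 0_{x_1}$; say $a_n \in \Mon(A_1)_{x_n}$ with $x_n \to x_1$. Then $\phi(a_n) \in \Mon(A_2)_{f(x_n)}$ and $\phi(a_n) \to \phi(0_{x_1}) = 0_{f(x_1)}$ by continuity of $\phi$. Since $A_2$ is integrable, $\phi(a_n) = 0_{f(x_n)}$ for $n$ large, i.e. $a_n \in \ker(\phi|_{x_n})$. For $n$ large enough $x_n$ lies in a neighborhood of $x_1$ where the hypothesis $\Mon(A_1)_{x}\cap\ker(\phi|_x) = \{0\}$ holds at every point, so $a_n = 0_{x_n}$ for $n$ large, which is exactly condition (iii) of the Crainic–Fernandes theorem for $A_1$. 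Hence $A_1$ is integrable.

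The main obstacle I anticipate is the point-set care needed in the last step: the elements $a_n$ live over varying base points $x_n$, so "eventually trivial" has to be read fiberwise, and one must make sure the hypothesis is being applied at the correct (nearby) base points rather than only at $x_1$ — this is why the hypothesis is stated for all $x_1 \in M_1$ and not just at the limit point. A secondary technical point is to confirm that $\phi$, being a vector bundle map covering $f$, is continuous in the relevant topologies on $\Mon(A_i) \subset A_i$ (the subspace topology from the total spaces), which is immediate, and that $\G(\phi)$ is well-defined on homotopy classes, which follows from functoriality of the $A$-path construction. No genuinely hard analysis is involved beyond what is already packaged in the statement of the Crainic–Fernandes theorem.
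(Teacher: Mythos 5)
Your proposal is correct and follows essentially the same route as the paper's proof: push a sequence of monodromy elements of $A_1$ converging to a trivial element forward by $\phi$, use integrability of $A_2$ (via the sequence criterion) to conclude the images are eventually trivial, and then use the hypothesis $\Mon(A_1)_{x}\cap\ker(\phi|_x)=\{0\}$ at the (varying) base points to conclude the original elements are eventually trivial. Your extra care about applying the hypothesis at the nearby base points $x_n$ rather than only at the limit point is a welcome clarification of a detail the paper glosses over, but it is the same argument.
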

\begin{proof}
Consider a sequence of monodromy elements $(v_n)_{n\in \mathbb{N}}\subset \Mon(A_1)$  converging to $0^{A_1}_x$ for some $x\in M_1$. Then $(\phi(v_n))_{n\in \mathbb{N}}$ is a sequence of elements of $\Mon(A_2)$ converging to $0_{\phi(x)}^{A_2}$. Since $A_2$ is integrable, $\phi(v_n)=0_{x_n}^{A_2}$ for $n$ big enough, thus $v_n\in \ker \phi=\{0_{x_n}^{A_1}\}$ for $n$ large, which proves that $A_1$ is integrable.
\end{proof}

 From a general perspective, Lemma \ref{lma: gen int} may seem far from an optimal criteria for integrability (think of the identity map $A \to A$, or a trivial map $A\to 0$). However, in the case of a \vba, it turns out to give a sufficient condition.

\begin{theorem}\label{prop:intiff} Let $(D,A,E,M)$ be a \vba. Then, $D$ is integrable if and only if A is integrable and $\Mon(D)\cap \ker p=\{0^D_E\}$.
\end{theorem}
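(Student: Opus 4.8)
The plan is to establish the two implications separately, using Lemma \ref{lma: gen int} for the hard direction and the general structure theory of \cite{CF03} together with Proposition \ref{prop:Aint} for the easy one. The ``only if'' direction is essentially immediate: if $D$ is integrable, then $A$ is integrable by Proposition \ref{prop:Aint}, and since $\G(D)$ is smooth, criterion (iii) of the Crainic--Fernandes theorem forces $\Mon(D)$ to be "discrete along zero" in the sense that no nontrivial sequence of monodromy elements converges to a trivial one; one then needs to argue that in fact $\Mon(D)\cap\ker p = \{0\}$ exactly, not merely locally discrete. Here I would exploit the scalar-multiplication action: since each $m_\lambda:D\to D$ is a \vba (hence Lie algebroid) morphism covering $m^E_\lambda$ and fixing the base $A$, it acts on $A$-paths and $A$-homotopies, hence on $\Mon(D)$, and it preserves $\ker p$ (because $p\circ m_\lambda = p$). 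Thus $\Mon(D)\cap \ker p$ is invariant under the full scaling $\R$-action on the fibers of $\ker p \to A$; an invariant subgroup of a vector space under all scalings that is also "discrete near $0$" must be trivial. This is the key leverage point that distinguishes the \vba case from a general Lie algebroid.

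For the ``if'' direction, assume $A$ is integrable and $\Mon(D)\cap\ker p = \{0\}$. Apply Lemma \ref{lma: gen int} with $A_1 = D$ (over $M_1 = E$), $A_2 = A$ (over $M_2 = M$), and $\phi = p: D\to A$. I first need to check that $p: D\to A$ is genuinely a Lie algebroid morphism over $p_E: E\to M$ — this follows from the definition of a \vba, since $m_\lambda$ being an algebroid morphism and the compatibility of the two structures forces $p$ to intertwine the anchors and brackets (concretely, $p$ sends linear sections to their covered sections in $\Gamma(A)$, sends core sections to zero, and the bracket relations recalled after Definition \ref{def:vba} show this is a morphism). Then the hypothesis $\Mon(D)_e \cap \ker(p|_e) = \{0\}$ for all $e\in E$ is exactly the hypothesis of Lemma \ref{lma: gen int}, and integrability of $A = A_2$ gives integrability of $D = A_1$ directly.

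The main obstacle I anticipate is the ``only if'' direction: upgrading the Crainic--Fernandes discreteness condition (no nontrivial sequence converging to $0$) to the genuinely algebraic statement $\Mon(D)\cap\ker p = \{0\}$. The argument sketched above — that $\Mon(D)\cap \ker p$ is a scaling-invariant subset of a vector bundle, so any element of it can be shrunk toward $0$ by applying $m_\lambda$ with $\lambda\to 0$, producing a sequence in $\Mon(D)$ converging to a trivial element and hence (by integrability of $D$) eventually trivial, which by homogeneity of the construction forces the original element to be trivial — is the crux and needs care to make rigorous: one must verify that $m_\lambda$ really does descend to a self-map of $\Mon(D)$ compatible with the Banach-manifold topology on $P(D)$, and that "$m_\lambda(v)$ trivial for some $\lambda \neq 0$" implies "$v$ trivial", which again uses that $m_{1/\lambda}$ is also an algebroid morphism. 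Everything else is bookkeeping with the definitions recalled in Section \ref{sec:background}.
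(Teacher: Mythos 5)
Your proposal is correct and follows essentially the same route as the paper: the ``if'' direction is exactly the application of Lemma \ref{lma: gen int} to the algebroid morphism $p:D\to A$, and the ``only if'' direction is the paper's argument of scaling a putative nontrivial element of $\Mon(D)\cap\ker p$ by $m_{\lambda_n}$ (the homogeneous structure of $D\to A$) with $\lambda_n\to 0$ to produce nontrivial monodromy elements converging to a trivial one, contradicting the Crainic--Fernandes criterion. Only cosmetic slips: the scaling acts on $D$-paths and $D$-homotopies (not ``$A$-paths''), and it moves the base point in $E$ rather than acting within a single fiber, so the precise statement is invariance of the subset $\Mon(D)\cap\ker p\subset D$, exactly as your final paragraph makes rigorous.
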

\begin{proof}
Assume that $A$ is integrable and $\Mon(D)\cap \ker p=\{0^D_E\}$. Since $p:D\to A$ is a Lie algebroid morphism, we conclude that $D$ is integrable by Lemma \ref{lma: gen int}. Conversely, if $D$ is integrable, then Proposition \ref{prop:Aint} implies that $A$ is integrable as well. Now, suppose that there exists a non trivial element $\xi\in\Mon(D)_e\cap \ker p|_e$, where $e\in E$. %Then, for any $\lambda\in \mathbb{R}$, we denote by $m_\lambda:D\to D, \zeta\mapsto \lambda \zeta$, the dilation map induced by the homogeneous structure of $D\to A$. 
Consider  a sequence $(\lambda_n)_{n\in \mathbb{N}}\subset \mathbb{R}$ of non vanishing numbers converging to zero in $\mathbb{R}$. Then $(m_{\lambda_n}(\xi))_{n\in\mathbb{N}}$ is a sequence of non trivial elements of $\Mon(D)$ which converges to $0^A_x\in A\subset D$, where $x=p_E(e)$. This contradicts the integrability of $D$. 
\end{proof}

Below, we list a few immediate consequences of Theorem \ref{lma: gen int}. To that end, we recall that a Lie algebroid $A$ can be restricted to one of its leafs $L\subset M$ and that, denoting by $A_L\equiv A|_L \to L$ the resulting Lie algebroid, the inclusion $\SelectTips{cm}{}\xymatrix@C=12pt{i_{A_L}:A_L\ \ar@{^{(}->}@<-1.5pt>[r] & A}$ defines a Lie subalgebroid. Considering the Lie algebroid morphism $p:(D\to E) \to (A\to M)$, it follows that the preimage $D|_{A_L}=p^{-1}(A_L)$ of the subalgebroid $A_L\subset A$ defines a Lie subalgebroid of $D$. In particular, $D|_{A_L}$ inherits a Lie algebroid structure over $E|_L$ and, by considering the restriction of the homogeneous structure of $D$, it follows that $(D|_{A_L},A_L,E|_L,L)$ defines a \vba. This structure can be alternatively understood as a pull-back of $(D,A,E,M)$ along the map $i_{A_L}$,
\begin{equation}\label{eq:restriction}(D|_{A_L}:=i_{A_L}^*D,A_L,E|_L,L).\end{equation}

% 
% 
% and by:
% \begin{equation}\label{eq:restriction}(D|_{A_L}:=i_{A_L}^*D,A_L,E|_L,L),\end{equation}
% the pull-back \vba along the injection $\SelectTips{cm}{}\xymatrix@C=12pt{i_{A_L}:A_L\ \ar@{^{(}->}@<-1.5pt>[r] & A}$. Alternatively, one observes that $D|_{A_L}=p^{-1}(A_L)$. \marginpar{* Explain the Lie algd str better}

\begin{corollary}\label{cor: leaves}
Let $D$ be a \vba over $A$. Then $D$ is integrable if and only if $A$ is integrable and $D|_{A_L}\to E|_L$ is integrable for each leaf $L\subset M$ of $A$.
\end{corollary}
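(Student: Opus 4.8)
The plan is to deduce Corollary~\ref{cor: leaves} from Theorem~\ref{prop:intiff} by observing that the condition $\Mon(D)\cap\ker p=\{0\}$ is intrinsically leafwise, since both $\ker p$ and the monodromy groups live over points of $M$ and are sensitive only to the foliation of $A$. First I would recall that $\Mon(D)=\bigsqcup_{x\in M}\Mon(D)_x$ with $\Mon(D)_x\subset \g^D_x=\ker(\rho_D|_x)$, and that the isotropy Lie algebra $\g^D_x$ only depends on the restriction of $D$ to any neighborhood of $x$ in its leaf; in particular, if $L$ is the leaf of $A$ through $p_D(x)\in M$, then the leaf of $D\to E$ through $x$ projects into $E|_L$ and $\g^D_x=\g^{D|_{A_L}}_x$. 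Likewise $\ker(p|_x)$ is unchanged upon restricting to $A_L$, since $D|_{A_L}=p^{-1}(A_L)$ and $\ker p\subset p^{-1}(0^A)\subset D|_{A_L}$.

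The key step is then to show $\Mon(D)_x=\Mon(D|_{A_L})_x$ for every $x$ lying over a leaf $L\subset M$ of $A$. For this I would use that an $A$-homotopy (or $A$-path) witnessing $v\sim 0_x$ in $D$ has a base map $I\to E$ whose composition with $p_D:E\to M$ is an $A$-path in $M$, hence stays inside a single leaf $L$ of $A$; thus the whole homotopy takes values in $p^{-1}(A_L)=D|_{A_L}$. Conversely any $D|_{A_L}$-homotopy is in particular a $D$-homotopy via the inclusion $D|_{A_L}\hookrightarrow D$, which is a Lie algebroid morphism. This gives the equality of monodromy groups, and therefore
\[
\Mon(D)\cap\ker p=\{0\}
\quad\Longleftrightarrow\quad
\Mon(D|_{A_L})\cap\ker p|_{A_L}=\{0\}\ \text{ for every leaf }L\subset M\text{ of }A.
\]

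Finally I would assemble the argument: if $D$ is integrable, then $A$ is integrable by Proposition~\ref{prop:Aint}, each $A_L$ is integrable (it is a leaf, hence its restriction integrates via the leafwise source-simply-connected groupoid), and by Theorem~\ref{prop:intiff} applied to $D$ we get $\Mon(D)\cap\ker p=\{0\}$, which by the displayed equivalence gives $\Mon(D|_{A_L})\cap\ker p|_{A_L}=\{0\}$; applying Theorem~\ref{prop:intiff} to each \vba $(D|_{A_L},A_L,E|_L,L)$ shows $D|_{A_L}$ is integrable. Conversely, if $A$ is integrable and every $D|_{A_L}$ is integrable, then Theorem~\ref{prop:intiff} gives $\Mon(D|_{A_L})\cap\ker p|_{A_L}=\{0\}$ for all $L$, hence $\Mon(D)\cap\ker p=\{0\}$ by the equivalence, and a second application of Theorem~\ref{prop:intiff} to $D$ itself yields integrability of $D$. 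The main obstacle I anticipate is the bookkeeping in identifying leaves and restrictions correctly: one must check that a leaf of the Lie algebroid $D\to E$ really does sit over a single leaf of $A\to M$ (which follows from $p$ being a Lie algebroid morphism and $p_D$ intertwining anchors), and that the pull-back \vba $D|_{A_L}=p^{-1}(A_L)$ has exactly the restricted monodromy and the restricted $\ker p$; once these identifications are in place the corollary is a formal consequence of Theorem~\ref{prop:intiff}.
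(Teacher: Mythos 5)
Your argument is correct and follows essentially the same route as the paper's own proof: both reduce the statement to Theorem \ref{prop:intiff} by noting that any $D$-homotopy co-restricts (via $p$) to a single leaf $L$ of $A$, hence takes values in $D|_{A_L}=p^{-1}(A_L)$, so the monodromy groups of $D$ over $E|_L$ coincide with those of $D|_{A_L}$ and the condition $\Mon(D)\cap\ker p=\{0\}$ can be checked leaf by leaf. Your extra details (that $A_L$ is integrable whenever $A$ is, and the explicit two-way assembly of the equivalence) simply make explicit what the paper leaves implicit.
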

\begin{proof}
The image  of a Lie algebroid morphism $TI^2\to A$ lies entirely over a single leaf of $A$ since $I^2$ is connected. This morphism then co-restricts to a map $TI^2\to A_L$ for some leaf $L$ of $A$. In particular, this holds for $D$-homotopies as well and, therefore, the monodromy groups of $D$ over points of $E|_L$ coincide with the monodromy groups of $D|_{A_L}$. Then, the condition in Theorem \ref{prop:intiff} needs to be checked only on $D|_{A_L}$ for each leaf $L$ of $A$.
\end{proof}

\begin{remark}
Corollary \ref{cor: leaves} may seem somewhat unexpected. Indeed, the obstructions to the integrability of an arbitrary Lie algebroid have both a longitudinal and a transverse nature with respect to the base foliation. In practice, this means that one has to compute the monodromy groups leafwise, and then make sure that they do not admit any accumulation point when moving from one leaf to another. From this point of view, a \vba shows a much more rigid behavior, since one has to check the \emph{vanishing} of the intersection $\Mon(D)\cap \ker p$, which can be done independently over $A_L$ for each leaf $L$.
\end{remark}

\begin{corollary}\label{int:injectivevba}
Let $D$ be a \vba over $A$ with core $C$ and side $E$. If the core anchor $\partial:C\to E$ is injective, then $D$ is integrable iff $A$ is.
\end{corollary}
\begin{proof}
Recall that by construction, the monodromy group $\Mon(D)_e$ of a Lie algebroid $D$ at some point $e\in E$ lies in the isotropy algebra $\g^D_e=\ker (\rho_D|_e)$ at $e$. Let us now look closer into $\ker p$. First, notice that $\rho_D$ sends $\ker p$ to $\ker Tp_E$ and that, as for any arbitrary \vba, the observation given below the  sequence \eqref{eq:coreseqalg} shows that   \[\ker p\simeq p_E^*C\overset{set}{\simeq}  E\oplus C, \ \ \ker Tp_E\simeq p_E^*E\overset{set}{\simeq}  E\oplus E.\] Then, the restriction of $\rho_D$ to $\ker p$ takes the form $\rho_D(e,c)=(e,\partial c)$. It follows that $\Mon(D)_e\cap \ker p|_e\subset \{e\}\times \ker \partial$ for any $e \in E$. When the core anchor is injective, the condition $\Mon(D)\cap \ker p=\{0^D_E\}$ is therefore automatically satisfied.
\end{proof}
%\begin{remark}
%Recall here that the geometrical meaning of the core anchor being injective is that $\K$ is a foliation Lie algebroid, where the leaves are affine subspace in the fibers of the projection $p_E:E\to M$ (see Remark \ref{} and Prop. \ref{}).
%\end{remark}

The following example illustrates the computation of monodromy groups in a Poisson geometric context and, in particular, how to apply Theorem \ref{prop:intiff}. 

\begin{example}[An integrable \vba]\label{ex:int}
Consider the dual vector space $\su$ of the Lie algebra $\mathfrak{su}_2$ and let $E=C=\mathfrak{su}_2^*\times \mathbb{R}$ be seen as trivial line bundles over $M=\su$. The linear Poisson structure $\pi_M\in \mathcal{X}^2(\su)$ given by
\[ \pi_M = x^1 \partial_{x^2}\wedge \partial_{x^3} + x^2 \partial_{x^3}\wedge \partial_{x^1}+x^3 \partial_{x^1}\wedge \partial_{x^2},  \]
where $x^i$ denote standard linear coordinates on $\su\simeq \R^3$, defines a (integrable) Lie algebroid $A = T^*_{\pi_M} \su$ over $M$. On the split \DVB $D=D_{A,E,C}$ given by 
$$\SelectTips{cm}{}
\xymatrix@R=20pt@C=25pt{ *+[c]{D=T^*\su \times \R \times \R}\ar[d]_-{}\ar[r]^-{ p} &*+[c]{T^*\mathfrak{su}_2^*}\ar[d]^{ }\\
           *+[c]{\mathfrak{su}_2^*\times \mathbb{R}}\ar[r]^{ }                          &*+[c]{\mathfrak{su}_2^*}}$$
we introduce a \vba structure $(D,T^*_{\pi_M} \su,E,\su)$ by setting
\begin{align*}
&&&&   [dx^i,dx^j]_D&=dx^k+x^k \ \be \ \cc,& \rho_D(dx^i)&=x^k\partial_{x^j}-x^j\partial_{x^k},\\
&&&&   [dx^i,\cc]_D&=0,        & \rho_D(\cc)&=0,                     &&
\end{align*}
where $(i,j,k)$ denotes a cyclic permutation of $(1,2,3)$, $\be:E \to \R$ denotes the fiberwise linear coordinate, $c\equiv 1 \in \Gamma C$ is the constant section with $\hat{c}\in \Gamma(E,D)$ the corresponding core section  and $dx^i\equiv(dx^i)^l$ are sections of $T^*\!M$ seen as linear sections of $D$ (recall Section \ref{subsec:split}). (Notice that the core anchor $\partial$ of this \vba is zero.)

Let us verify that $\Mon(D)_e\cap \ker(p)=0^D_E(e)$ and thus, by Theorem \ref{prop:intiff}, that $D$ is integrable. To that end, we shall introduce a Poisson structure $\pi_E$ on the total space of $E$ and define a Lie algebroid isomorphism $\psi:(T_{\pi_E}^*E \to E) \longrightarrow (D\to E)$. We do this in order to reduce the computation of the monodromy groups $\Mon(D) \simeq^{\psi} \Mon(T_{\pi_E}^*E)$ to that of the variation of symplectic areas along the leaves of the Poisson manifold $(E,\pi_E)$, as explained in \cite{CFpoisson}. 

The Poisson structure on $E$ is given by introducing a re-scaling factor depending on the fiber coordinate
\[ \pi_E = \left(1+\be^2\!/2\right) \pi_M \in \mathcal{X}^2(E=M \times \R).\]
The corresponding cotangent Lie algebroid  $T_{\pi_E}^*E \to E$ is given by
\begin{align*}
[dx^i,dx^j]_{\pi_E}&=\left(1+\be^2\!/2\right)dx^k +  x^k \ \be d\be,\\ 
%&&&& &= \left(1+\be^2\!/2\right)dx^k +  x^k \ \be d\be \\
[dx^i,d\be]_{\pi_E}&=0,  \\     
\rho_{\pi_E}(dx^i)&=\left(1+\be^2\!/2\right)(x^k\partial_{x^j}-x^j\partial_{x^k}),\\
 \rho_{\pi_E}(d\be)&=0,                     
\end{align*}
where $(i,j,k)$ are again cyclic permutations of $(1,2,3)$.
It is easy to see that, defining $\psi:(T^*\!E\to E) \to (D\to E)$ as the vector bundle morphism over the identity which satisfies
\[ \psi(dx^i) = \left(1+\be^2\!/2\right) dx^i, \ \ \ \psi(d\be) = \left(1+\be^2\!/2\right)^2 \cc,\]
then $\psi$ defines a Lie algebroid isomorphism. (Notice, though, that $\psi$ is \emph{not} a \DVB morphism for the cotangent \DVB structure on $T^*E$.)
The symplectic foliation of $\pi_E$ is given by singular leaves  which are reduced to a point of the form $\{(0_{\mathfrak{su}_2^*},\be)\}$, and by symplectic spheres of the form $S^2_r\times\{\be\}$ with $S^2_r\subset \su\simeq \R^3$ the sphere of radius $r=||(x^1,x^2,x^3)||$ having symplectic area given by
$$\mathcal{A}(r,\be)=4 \pi r(1+\be^2\!/2)^{-1}.$$
 By \cite[Prop. 5]{CFpoisson}, the monodromy groups of $T_{\pi_E}^*\!E$ are given by differentiating $\mathcal{A}$ in the transverse directions:
$$
\Mon(T^*\!E)=
 4\pi\left(\frac{dr}{1+\be^2\!/2}-\frac{r\be d\be}{(1+\be^2\!/2)^{2}}\right)\cdot \mathbb{Z},\quad\text{ if } r\neq 0.\\
$$
(On the subset of singular leaves $\{r=0\}$, $\Mon(T^*\!E)$ clearly vanishes.) By applying the isomorphism $\psi$ we deduce that the monodromy groups of $D$ are given by:
$$
\Mon(D)=\begin{cases}
\ 4\pi \left(r^{-1} \sum_i x^idx^i -r \be \cc\right)\cdot \mathbb{Z},&\text{ on } \{r\neq 0\},\\
    \{0^D_E\}&\text{ on } \{r=0\}.\end{cases}$$
In conclusion, elements in $\Mon(D)\cap \ker p$ must have $x^i=0$, leaving $0^D_E(e)$ as the only possibility and thus showing that $D$ is integrable.
\end{example}

%%%%%%%%%%%%%%%%%%%%%%%%%%%%%%%%%%%%%%%%%%%%%%%%%%%%%%%%%%%%%%%%%%%%%%%%%%%%%%%%%%%%%%%%%%%%%%%%%%%%%%%%%%%%%%%%%%%%%%%%%%%%%%%%%%%%%%%%%
%%%%%%%%%%%%%%%%%%%%%%%%%%%%%%%%%%%%%%%%%%%%%%%%%%%%%%%%%%%%%%%%%%%%%%%%%%%%%%%%%%%%%%%%%%%%%%%%%%%%%%%%%%%%%%%%%%%%%%%%%%%%%%%%%%%%%%%%%
\subsection{\vbas of regular type}\label{subs:regular}%%%%%%%%%%%%%%%%%%%%%%%%%%%%%%%%%%%%%%%%%%%%%%%%%%%%%%%%%%%%%%%%%%%%%%%%%%%%%%%%%%%%%%%%%%%%%%%%%%%%%%%
%%%%%%%%%%%%%%%%%%%%%%%%%%%%%%%%%%%%%%%%%%%%%%%%%%%%%%%%%%%%%%%%%%%%%%%%%%%%%%%%%%%%%%%%%%%%%%%%%%%%%%%%%%%%%%%%%%%%%%%%%%%%%%%%%%%%%%%%%
%%%%%%%%%%%%%%%%%%%%%%%%%%%%%%%%%%%%%%%%%%%%%%%%%%%%%%%%%%%%%%%%%%%%%%%%%%%%%%%%%%%%%%%%%%%%%%%%%%%%%%%%%%%%%%%%%%%%%%%%%%%%%%%%%%%%%%%%%
In this Section we shall restrict our study of integrability to the sub-class of \vbas called \emph{regular}. It will turn out that this class already encodes all the ingredients needed for the general case.

First, let us recall some basic definitions from \cite{GM08}.
\begin{definition}\label{def:type01}%%%%%%%%%%%%%%%%%%%%%%%%%%%%%%%%%%%%%
A $\mathcal{VB}$-algebroid $(D,A,E,M)$  is called \textbf{regular} if the core anchor $\partial: C \to E$ has constant rank. A regular $\mathcal{VB}$-algebroid is called of $\textbf{type $0$}$ (respectively of \textbf{type $1$}), if the core anchor $\partial:C\to E$ is zero (respectively an isomorphism).
\end{definition}%%%%%%%%%%%%%%%%%%%%%%%%%%%%%%%%%%%%%

\begin{example}\label{ex:leaf}%%%%%%%%%%%%%%%%%%%%%%%%%%%%%%%%%%%%%
Given an arbitrary \vba $(D,A,E,M)$ and a leaf $L$ of $A$, the restriction $(D|_{A_L},A_L,E|_L,L)$ defined by \eqref{eq:restriction} is always regular. Indeed, let $\g^D_e=\ker(\rho_D|_e)$ denote the isotropy Lie algebra of $D$ at $e\in E$. It is easy to check that, for $e=0^E_x$,  $\g^D_{0^E_x}=\g^A_x \oplus \ker(\partial_x)$ for any $x \in M$. Now, for any Lie algebroid, the isotropy Lie algebras have constant rank along its leaves. By noticing that as $x$ varies within $L$,  $0^E_x$ varies inside a fixed leaf of $D$, we see that both $\g^D_{0^E_x}$ and $\g^A_x$ have constant rank over $x\in L$. We thus deduce that the core anchor $\partial_x$ has constant rank when $x$ varies within a leaf $L$ of $A$.
\end{example}%%%%%%%%%%%%%%%%%%%%%%%%%%%%%%%%%%%%%

It follows from Corollary \ref{cor: leaves} and Example \ref{ex:leaf} that one is always led to deal with the regular case. %Furthermore, as shown in \cite{GM08}, any regular $\mathcal{VB}$-algebroid can be  decomposed in type $0$ and type $1$ components. For this reason, 
We start by looking at each case separately.

%{\bf Type $1$.}
%%%%%%%%%%%%%%%%%%%%%%%%%%%%%%%%%%%%%%%%%%%%%%%%%%%%%%%%%%%%%%%%%%%%%%%%%%%%%%%%%%%%%%
\subsubsection*{Integrability of type $1$ \vbas}%%%%%%%%%%%%%%%%%%%%%%%%%%%%%%%%%%%%%%
%%%%%%%%%%%%%%%%%%%%%%%%%%%%%%%%%%%%%%%%%%%%%%%%%%%%%%%%%%%%%%%%%%%%%%%%%%%%%%%%%%%%%%
\label{sub:vb1}By Corollary \ref{int:injectivevba}, a \vba of type $1$ $(D_1,A,E_1,M)$ is integrable if and only if $A$ is integrable as well. In this case, one can even describe the integration explicitly as follows. 

The key observation, made in \cite{GM08}, is that every type $1$ \vba is isomorphic to a particular one defined by $E_1$ and $A$, namely,
\begin{equation}\label{eq:t1:vba}
D_1\simeq \rho_A^*(TE_1),% \bigl\{(\alpha,X)\in A\times TE\ |\ dp_E(X)=\rho_A(\alpha)\bigr\}
\end{equation}
where $\rho_A^*(TE_1)$ is given by the pull-back \vba of $(TE_1,TM,E_1,M)$ by the morphism $\rho_A: A \to TM$. Hence, the core of $\rho_A^*(TE_1)$ is $E_1$ and the core-anchor is $\id_{E_1}$. In terms of a splitting $\rho_A^*(TE_1)\simeq D_{A,E_1,E_1}$, a type $1$ \vba structure is equivalent to an $A$-connection $\nabla^E=\nabla^C=\nabla^{E_1}$ on $E_1$ together with (minus) its curvature $\omega=-R_{\nabla^{E_1}}$ (see the Section \ref{subsec:split}).

If $G$ is a Lie groupoid integrating $A$, we denote by $H=({\bold{s}},\tt)^*(E_1\times E_1)$ the pull-back \vbg of  the pair \vbg $(E_1\times E_1, M\times M, E_1, M)$ by the Lie groupoid morphism $({\bold{s}},\tt): G \to M \times M$. Concretely,
\begin{align*}
H &= {\bold{s}}^*E_1 \oplus_G \tt^*E_1\\ &=\left\{ (e,g,e')\in E_1 \times G \times E_1:p_E(e)={\bold{s}}(g), \ p_E(e')=\tt(g)\right\} \rightrightarrows E_1
\end{align*} 
where the source, target and identity maps are $\tilde{{\bold{s}}}(e,g,e')=e$, $\tilde{\tt}(e,g,e')=e'$ and $\tilde{\one}(e)=(e,1_{p_E(e)},e)$; while multiplication is given by
\[ (e,g,e') \cdot (f,h,f') = (f, gh, e').\]
It is not hard to verify that $\LIE(H)\simeq \rho_A^*(TE_1)$ directly from the definitions, and thus:

\begin{proposition}\label{prop:inttype1}
Let $(D_1,A,E_1,M)$ be a \vba of type $1$ and $G$ any integration of $A$. Then, the \vbg $({\bold{s}},\tt)^*(E_1\times E_1)$ integrates $D_1$.
\end{proposition}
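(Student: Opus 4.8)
The plan is to verify that the \vbg $(s,t)^*(E_1\times E_1)$ does indeed integrate $D_1$, i.e. that applying the Lie functor to it recovers $D_1$ up to isomorphism of \vbas. The key ingredient is the isomorphism \eqref{eq:t1:vba}, $D_1\simeq \rho_A^*(TE_1)$, together with the compatibility of the Lie functor with pull-backs recalled at the end of Subsection \ref{subsec:LieVB}, namely that for a Lie groupoid morphism $F:G'\to G$ one has $\LIE(F^*H)=\LIE(F)^*\LIE(H)$.

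First I would record the model case: for the pair \vbg $(E_1\times E_1,M\times M,E_1,M)$ of Example above, applying $\LIE$ to the underlying pair groupoid $E_1\times E_1\tto E_1$ and $M\times M\tto M$ gives $\LIE(E_1\times E_1)=TE_1$ and $\LIE(M\times M)=TM$, so that $\LIE(E_1\times E_1,M\times M,E_1,M)=(TE_1,TM,E_1,M)$ as \vbas. This is the standard identification of the Lie algebroid of a pair groupoid with the tangent algebroid, and the \vba-level statement is just the compatibility of the Lie functor with the vector bundle structures, which is built into the functor $\LIE:\VBG\to\VBA$.

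Next I would apply the pull-back compatibility. Let $G$ be an integration of $A$, so that $\LIE(G)=A$, and let $(s,t):G\to M\times M$ be the (Lie groupoid) morphism given by the source and target maps; its linearization is exactly the anchor, $\LIE(s,t)=\rho_A:A\to TM$. Then
\begin{align*}
\LIE\bigl((s,t)^*(E_1\times E_1),G,E_1,M\bigr) &= \bigl(\LIE(s,t)^*\LIE(E_1\times E_1),\LIE(G),E_1,M\bigr)\\
 &= \bigl(\rho_A^*(TE_1),A,E_1,M\bigr),
\end{align*}
which is isomorphic to $(D_1,A,E_1,M)$ as a \vba by \eqref{eq:t1:vba}. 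Hence $(s,t)^*(E_1\times E_1)$ integrates $D_1$, as claimed. One should also check that the side bundle is the correct one: the pull-back \vbg of $(E_1\times E_1,M\times M,E_1,M)$ along $(s,t):G\to M\times M$ covers $E_1\to M$ via $(s,t)^*E_1=E_1$ since $(s,t)$ covers the identity on $M$ on the side $M\times M\to M$ — more precisely the relevant side map is $G\to M$, say the source, and $E_1$ pulled back is still $E_1\to M$ up to the canonical identification; this is a routine check from the description of pull-back \vbgs.

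The only real subtlety — not so much an obstacle as a point to make precise — is that $\LIE(s,t)=\rho_A$ and, more generally, that the isomorphism \eqref{eq:t1:vba} is natural enough that the chain of identifications above respects all the structure maps ($\LIE$ of the groupoid structure on $E_1\times E_1$ must match the tangent algebroid structure on $TE_1$, and this identification must be compatible with the horizontal vector bundle structure toward $E_1$). All of this is contained in \cite{GM08} and in the pull-back constructions recalled from \cite{BCdH}, so the proof amounts to assembling these facts; I do not expect any genuinely hard step. Alternatively, one may avoid \eqref{eq:t1:vba} entirely and argue directly via Corollary \ref{int:injectivevba} and Theorem \ref{thm:DintegGinteg} that the $s$-simply connected integration of $D_1$ exists, and then identify it with $(s,t)^*(E_1\times E_1)$ by uniqueness once $A$ is replaced by $\G(A)$; but the pull-back argument is cleaner and works for any integration $G$ of $A$, matching the stated generality.
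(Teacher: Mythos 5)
Your proof is correct and follows the same route as the paper: the paper's proof is precisely the observation that $\LIE$ commutes with pull-backs, applied to $D_1\simeq\rho_A^*(TE_1)$ with $\LIE(s,t)=\rho_A$ and $\LIE(E_1\times E_1,M\times M,E_1,M)=(TE_1,TM,E_1,M)$. You have merely spelled out the identifications the paper leaves implicit.
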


Alternatively, the result can be seen as a direct consequence of the fact that the $\LIE$ functor from \vbgs to \vbas commutes with pull-backs since $\LIE({\bold{s}},\tt)=\rho_A$.

%bigskip

%{\bf Type $0$.}
%%%%%%%%%%%%%%%%%%%%%%%%%%%%%%%%%%%%%%%%%%%%%%%%%%%%%%%%%%%%%%%%%%%%%%%%%%%%%%%%%%%%%%
\subsubsection*{Integrability of type $0$ \vbas}\label{sub:vba0}%%%%%%%%%%%%%%%%%%%%%%%%%%%%%%%%%%%%%%
%%%%%%%%%%%%%%%%%%%%%%%%%%%%%%%%%%%%%%%%%%%%%%%%%%%%%%%%%%%%%%%%%%%%%%%%%%%%%%%%%%%%%%
Let $(D_0,A,E_0,M)$ be a \vba of type $0$ with core $C_0$. In this Section, we show that the elements in $\Mon(D_0)\cap \ker(p)$ which obstruct the integrability of $D_0$ can be expressed through an integral formula.

First, we shall follow \cite{GM08} and characterize the structure of $D_0$ in terms of connection data. 
Given a splitting of $D_0$ as in Section \ref{subsec:split} with associated connection data $(\nabla^{E_0},\,\nabla^{C_0},\, \omega_0)$, since the core anchor $\partial$ is zero, the relations in Thm. \ref{lma:conn data} say that both $A$-connections are flat. Moreover, $d_{\nabla^{\scriptscriptstyle \Hom(E_0,C_0)}}$ defines a differential on $\Omega(A;\Hom(E_0,C_0))$ for which $\omega_0$ is a $2$-cocycle. Different choices of  splitting (recall the formulas in Remark \ref{rmk:changesplit}) induce
the same flat $A$-connections $\nabla^{E_0},\,\nabla^{C_0}$ on $E_0$ and $C_0$, while modifying $\omega_0$ by an exact $2$-form in the complex $(\Omega(A;\Hom(E_0,C_0)),d_{\nabla^{\scriptscriptstyle \Hom(E_0,C_0)}})$. In this way, isomorphism classes of type $0$ \vbas correspond to triples $(\nabla^{E_0},\nabla^{C_0}, [\mkern0.5mu\omega_0])$, with  $[\mkern0.5mu\omega_0] \in H^2(A;\Hom(E_0,C_0))$ denoting the class in the underlying cohomology. (See also \cite{GM08}.) %Alternatively, one can see  a change of splitting as a \vba automorphism of $D_0$ covering the identity on $A$.
%\er%%%%%%%%%%%%%%%%%%%%%%%%%%%%%%%%%%%%%%%%%%%%%%%%%%%%

Our integral formula for the elements in $\Mon(D_0)\cap \ker(p)$ will involve the following ingredients.

\begin{definition}\label{def:spheres}%%%%%%%%%%%%%%%%%%%%%%%%%%%%%%%%%%%%%%%%%%%%%%%%%%%%%%%%%%%%%%%%%%%%%%%%%%%%%%%%%%%%%%
 An $A$-sphere is a Lie algebroid morphism $\sigma:TI^2 \to A$ with $I=[0,1]$ satisfying $\sigma|_{T\partial I^2} = 0$. The second $A$-homotopy group $\pi_2(A)$ is given by $A$-homotopy classes of such spheres (see \cite{BZ} for further details). The boundary $\partial I^2$ is mapped to a point $x\in M$ called the base point of the sphere.
\end{definition}%%%%%%%%%%%%%%%%%%%%%%%%%%%%%%%%%%%%%%%%%%%%%%%%%%%%%%%%%%%%%%%%%%%%%%%%%%%%%%%%%%%%%%

Let $W$ be a vector bundle over $M$ and $A\to M$ a Lie algebroid. Suppose that $\nabla$ is a flat $A$-connection on $W$, $\sigma:TI^2 \to A$ an $A$-sphere covering $\gamma:I^2 \to M$ and $\omega \in \Omega^2(A;W)$ a $2$-form on $A$ with values in $W$. Considering the holonomy with respect to the flat pullback $TI^2$-connection $\sigma^*\nabla$ along any path $(0,0) \to (t,s)$ on $I^2$, one can define a trivialization $\tau: \gamma^*W \to I^2 \times W_{\gamma(0,0)}$.
We thus introduce the following integral:
$$
\int_\sigma \omega := \int_{I^2} \tau(\sigma^*\omega) \in W_{\gamma(0,0)}.
$$
Recall that taking holonomy 'flattens' the bundle geometry, namely, it transforms $d_{[\sigma^*\nabla]}$ on $\Omega(I^2;\gamma^*W)$ into de Rham differential $d$ on  $\Omega(TI^2, W_{(0,0)})$:
\begin{equation}\label{eq:flattening}
\tau \circ d_{[\sigma^*\nabla]}  = d \circ \tau.
\end{equation}
We thus have an induced map 

$$\int:\pi_2(A,x) \times H^2(A,W) \to W_x.$$
The integrability problem for type $0$ \vbas can be then summarized in the following way.

\begin{proposition} \label{thm:type0}%%%%%%%%%%%%%%%%%%%%%%%%%%%%%%%%%%%%%%%%%%%%%%%%%%%%%%%%%%%%%%%%%%%%%%%%%%%%%%%%%%%%%%
Let $(D_0,A,E_0,M)$ be a \vba of type $0$ with underlying class $[\mkern0.5mu\omega_0] \in H^2(A;\Hom(E_0,C_0))$. Then, $D_0$ is integrable if and only if %one of the following equivalent conditions is satisfied:\vspace{-5pt}
%\begin{enumerate}[i)]
%\item 
$A$ is integrable and the periods of $\omega_0$ vanish, namely, for every $\mkern0.5mu[\mkern0.5mu\sigma] \in \pi_2(A)$, we have that
$$\int_\sigma \omega_0 =0.$$
%
%\item $A$ is integrable and $[\mkern0.5mu\omega_0]$ lies in the image of the Van Est map, namely:\vspace{-5pt}
% $$\exists\mkern1mu \Omega_0 \in H^2(\G(A),\Hom(E_0,C_0))\text{ such that }\VV(\Omega_0)=\omega_0.$$
%\end{enumerate}
%Moreover, if it is the case, then the \vbg $H_{0,\Omega_0}$ integrates $D_0$.
\end{proposition}%%%%%%%%%%%%%%%%%%%%%%%%%%%%%%%%%%%%%%%%%%%%%%%%%%%%%%%%%%%%%%%%%%%%%%%%%%%%%%%%%%%%%%
\begin{proof}
The proof consists in showing that, for any $e\in E_0$, we have
 $$\Mon(D_0)_e \cap \ker (p) = \set{\mkern0.5mu\int_\sigma \omega_0(e): [\mkern0.5mu\sigma] \in \pi_2(A,p_E(e))},$$
 so that the claim follows from Theorem \ref{prop:intiff}. 
 Consider an element $v \in \Mon(D_0)_e \cap \ker( p)$. By definition, there exist a $D_0$-homotopy $h:TI^2 \to D_0$  between the constant $D_0$-paths $vdt$ and $0_E^D(e)dt$. The map $\sigma:=p\circ h : TI^2\to A$  then defines an $A$-sphere by direct inspection of the boundary conditions $h|_{T\partial I^2}$ and we denote by $\gamma:I^2\to M$ the base map of $\sigma$.
  We want to show that $$v=\int_\sigma \omega_0(e).$$ To that end, consider a splitting $D_0 \simeq D_{A,E_0,C_0}$ with associated connection data $(\nabla^{E_0},\nabla^{C_0},\omega_0)$. In the split \vba, the homotopy $h$ takes the form \[ h \equiv (\sigma, \hat{e}, \psi): TI^2 \to A\times_M E \times_M C,\]
  where $\hat{e}: I^2 \to E$ is a map covering $\gamma$ which can be seen as a section of $\gamma^*E$ and $\psi:TI^2\to C$ can be seen as a $1$-form on $I^2$ with values on $\gamma^*C$. 
    Let us split the condition that $h$ is a Lie algebroid morphism into conditions for $\hat{e}$, $\sigma$ and $\psi$. First, $\sigma$ must be a Lie algebroid morphism covering $\gamma$. Secondly, recall that the induced pullback map $h^*: \Gamma(E_0,\wedge^\bullet D^*_{E_0}) \to \Omega^\bullet(TI^2)$ between forms must commute with differentials. A computation shows that this implies (recall eq. \eqref{eq:Drep} and Remark \ref{rmk:CEdiff})
  \[ d_{[\sigma^*\nabla^{E_0}]}\hat{e}= 0 , \ \ \ d_{[\sigma^*\nabla^{C_0}]}\psi = \sigma^*\omega_0 (\hat{e}), \] 
  where $\sigma^*\nabla^{E_0}$ and $\sigma^*\nabla^{C_0}$ denote the (flat) pullback connections on $\gamma^*C_0$ and $\gamma^*E_0$, respectively. Using parallel transports $\tau^{E_0}$ and $\tau^{C_0}$ with respect to these flat connections we can trivialize $\gamma^*C_0$ and $\gamma^*E_0$ to $I^2\times E_0|_{\gamma(0,0)}$ and $I^2\times C_0|_{\gamma(0,0)}$, respectively. Then, the flat section $\hat{e}$ gets transformed into the constant section with value $\hat{e}(0,0)=e$ and  $\psi$ gets transformed into a $1$-form $\psi_0$ on $I^2$ with values in the vector space $C|_{p_E(e)}$ which, because of eq. \eqref{eq:flattening}, satisfies the equation
   \[d\psi_0 = \tau^{C_0} \circ \sigma^*\omega_0 \circ \tau^{E_0}(e).\] Integrating both sides over $I^2$ and using Stokes' theorem together with the boundary conditions (the only non-trivial boundary values being $\psi_0(\partial_t)|_{(t,0)}=v$ for any $t\in I$), we get the desired formula for $v$.
   
   Conversely, given $\sigma: TI^2\to A$ an $A$-sphere one can define $\hat{e}$ by parallel transport of $e$ and $\psi$ as above with $\psi_0 \in \Omega^1(I^2;C|_{\gamma(0,0)})$ given by
   \[\psi_0 = \left( \int_0^1 \int_s^1 g(u,r) du dr\right) dt + \left(\int_0^t g(u,s) du - t \int_0^1 g(u,s) du\right) ds,\]
   where we have written $\tau^{C_0}(\sigma^*\omega_0)(\hat{e})=g(t,s) dt\wedge ds$ for $g:I^2 \to C_0|_{\gamma(0,0)}$. Evaluating at $s=0,1$, we see that $h\equiv(\sigma,\hat{e},\psi)$ defines a $D$-homotopy between the constant $D$-path $vdt$ with $v=\int_\sigma \omega_0(e)$ and the zero path $0^D_Edt$, so that $v\in \Mon(D_0)_e\cap \ker(p)$.
  \end{proof}

Let us give a cohomological interpretation of the integrability criteria.
Assume that the flat $A$-connections can be integrated to representations of an integration $G$ of $A$ on both $E_0$ and $C_0$. In such a case, there is an induced representation of $G$ on $\Hom(E_0,C_0)$ and, to answer the question of how to lift $\omega_0$ from $A$ to $G$, one is lead to consider the \textbf{Van Est map}
 $${\VV}:H^\bullet(G,\Hom(E_0,C_0)) \longrightarrow H^\bullet(A,\Hom(E_0,C_0)).$$
 (More details will be given in Examples \ref{ex:vbg0_1},\ref{ex:vbg0_2} below.)
The Van-Est map for Lie groupoids and algebroids was studied in \cite{C00}. If $G$ is source simply-connected, then the representations $\nabla^{E_0}$ and $\nabla^{C_0}$ always integrate to representations of $G$ on $E_0$ and $C_0$, respectively. Moreover,  in this case, the Van Est map between degree $2$-cohomologies is \emph{injective}. 
%(An explicit formula for $\VV$ on $2$-cochains will appear in Section \ref{sec:reputh}.\marginpar{*point to specific rmk})
For an element in $H^2(A,\Hom(E_0,C_0))$ to be in the image of $\VV$, the criteria given in \cite[Cor. $2$]{C00} is exactly the vanishing of the corresponding spherical periods as in Prop. \ref{thm:type0}.

 \begin{proposition}\label{prop:VE0} A type $0$ \vba $(D_0,A,E_0,M)$ is integrable iff $A$ is integrable and $[\mkern0.5mu\omega_0]$ lies in the image of the Van Est map, namely:\vspace{-5pt}
 $$\exists\mkern1mu \hat{\omega}_0 \in H^2(\G(A),\Hom(E_0,C_0))\text{ such that }\VV(\hat{\omega}_0)=[\omega_0].$$
 \end{proposition}

\begin{remark}\label{rmk:H0}
In the case when $D_0$ is integrable, there is a way of constructing an integrating \vbg by integrating each piece in the connection data. Namely, let $G$ be an integration of $A$ such that $\nabla^{E_0}$ and $\nabla^{C_0}$ integrate to representations of $G$ on $E_0$ and $C_0$. Furthermore, let $\hat{\omega}_0 \in H^2(G,\Hom(E_0,C_0))$ be such that $\VV(\hat{\omega}_0)=[\omega_0]$.
In this case, the data given by the representations of $G$ on $E_0$ and $C_0$ together with any representative of the class $\hat{\omega}_0$ can be used to define a (type 0) \vbg that we shall denote $(H_{0,\hat{\omega}_0},G,E_0,M)$. In Section \ref{sec:reputh}, Examples \ref{ex:vbg0_1}, \ref{ex:vbg0_2}, we provide an explicit definition of this \vbg and show how it defines an integration of the given type $0$ \vba.
\end{remark}

%%%%%%%%%%%%%%%%%%%%%%%%%%%%%%%%%%%%%%%%%%%%%%%%%%%%%%%%%%%%%%%%%%%%%%%%%%%%%%%%%%%%%%%%%%%%
\subsubsection*{Integrability of general regular type \vbas}%%%%%%%%%%%%%%%%%%%%%%%%%%%%%%%%
%%%%%%%%%%%%%%%%%%%%%%%%%%%%%%%%%%%%%%%%%%%%%%%%%%%%%%%%%%%%%%%%%%%%%%%%%%%%%%%%%%%%%%%%%%%%
Consider a \vba $(D,A,E,M)$ of regular type. The key point for us is the following result of \cite{GM08} saying that such a \vba can be decomposed into type $0$ and type $1$ components.

\begin{theorem}[\cite{GM08}]\label{thm:typedec}%%%%%%%%%%%%%%%%%%%%%%%%%%%%%%%%%%%%%%%%%%%%%%%%%%%%%%%%%%%%%%%%%%%%%%%%%
  Let $(D,A,E,M)$ be a \vba of regular type. There exists (unique up to isomorphism) \vbas $(D_0,A,E_0,M)$ and $(D_1,A,E_1,M)$ respectively of type $0$ and type $1$, such that $D\simeq D_0 \oplus_A D_1$.
\end{theorem}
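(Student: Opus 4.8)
The plan is to pass to the split description of Subsection~\ref{subsec:split} and then to block-diagonalize the connection data along the canonical flag determined by the core anchor. Since $D$ is regular, $C_0:=\ker\partial$ is a subbundle of $C$ and $E_1:=\operatorname{im}\partial$ is a subbundle of $E$; I would fix (arbitrary, for now) complements $C=C_0\oplus C_1$ and $E=E_0\oplus E_1$, so that $\partial$ kills $C_0$ and restricts to an isomorphism $\partial_1\colon C_1\xrightarrow{\ \sim\ }E_1$. Choose any splitting of $D$ with connection data $(\nabla^E,\nabla^C,\omega)$. The relations of Lemma~\ref{lma:conn data} already give two structural facts: from $\partial\circ\nabla^C=\nabla^E\circ\partial$, the subbundle $E_1$ is $\nabla^E$-invariant and $C_0$ is $\nabla^C$-invariant; and from $R_{\nabla^E}=-\partial\circ\omega$ and $R_{\nabla^C}=-\omega\circ\partial$, the connection induced by $\nabla^E$ on $E/E_1$ and the restriction $\nabla^C|_{C_0}$ are flat.

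Next I would exploit the freedom in the choice of splitting, which is an affine space over $\Gamma(\Hom(A\otimes E,C))$. A change of splitting by $\phi$ transforms the data (one computes this from \eqref{AhatC}--\eqref{AhatE}; see also \cite{GM08}) by $\nabla^E\mapsto\nabla^E-\partial\circ\phi$, $\nabla^C_a(c)\mapsto\nabla^C_a(c)-\phi(a,\partial c)$, together with a corresponding modification of $\omega$. Writing $\beta\in\Gamma(A^*\otimes\Hom(E_0,E_1))$ and $\alpha\in\Gamma(A^*\otimes\Hom(C_1,C_0))$ for the off-diagonal (second fundamental form) blocks of $\nabla^E$ and $\nabla^C$ relative to the chosen complements, I want $\phi$ solving $\partial\circ\phi|_{E_0}=\beta$ and $\operatorname{pr}_{C_0}\circ\phi|_{E_1}=\alpha\circ\partial_1^{-1}$. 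Each equation is solvable because $\partial$ maps onto $E_1$ and $\partial_1$ is invertible; the two conditions constrain the $E_0$-part and the $E_1$-part of $\phi$ separately, so there is no compatibility obstruction. After this change of splitting $\nabla^E$ and $\nabla^C$ are block-diagonal for $E=E_0\oplus E_1$, $C=C_0\oplus C_1$, with $\nabla^{E_0}$ and $\nabla^{C_0}$ flat.

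The key point is that $\omega$ then becomes block-diagonal for free. Indeed, with the connections block-diagonal, $R_{\nabla^E}=-\partial\circ\omega$ forces the $E_0$-component of $\omega$ to take values in $\ker\partial=C_0$ (killing the $E_0\to C_1$ block) and identifies the rest with $-\partial_1^{-1}R_{\nabla^{E_1}}$; dually $R_{\nabla^C}=-\omega\circ\partial$ kills the $E_1\to C_0$ block. Hence $\omega=\omega_0\oplus\omega_1$ with $\omega_0\in\Gamma(\wedge^2A^*\otimes\Hom(E_0,C_0))$ closed for $\nabla^{E_0,C_0}$ (restricting $\nabla^{E,C}\omega=0$) and $\omega_1=-\partial_1^{-1}\circ R_{\nabla^{E_1}}$. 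Reading this off, $D_0:=A\oplus E_0\oplus C_0$ with data $(\nabla^{E_0},\nabla^{C_0},\omega_0)$ is a \vba with zero core anchor, hence of type $0$, and $D_1:=A\oplus E_1\oplus C_1$ with data $(\partial_1,\nabla^{E_1},\nabla^{C_1},\omega_1)$ is a \vba with invertible core anchor, hence of type $1$; in both cases one checks that the relations of Lemma~\ref{lma:conn data} restrict blockwise. Since the block-diagonal splitting identifies $D$ with $D_0\oplus_A D_1$, existence follows.

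For uniqueness up to isomorphism I would argue that $E_1=\operatorname{im}\partial$ and $C_0=\ker\partial$ are intrinsic to $D$, and that core anchors add under $\oplus_A$; hence in any decomposition $D\simeq D_0'\oplus_A D_1'$ the side bundle of the type $1$ factor is forced to be isomorphic to $E_1$ and the core of the type $0$ factor to $C_0$. Then $D_1'$ is pinned down up to isomorphism because every type $1$ \vba with side $E_1$ is isomorphic to $\rho_A^*(TE_1)$ by \eqref{eq:t1:vba}, and $D_0'$ is pinned down by the classification of type $0$ \vbas recalled above, its flat connections and the class $[\omega_0]\in H^2(A;\Hom(E_0,C_0))$ depending only on $D$. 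The step I expect to be most delicate is the block-diagonalization: verifying that the two linear equations for $\phi$ are genuinely independent and solvable, and that the off-diagonal blocks of $\omega$ vanish purely as a consequence of the structure equations, rather than requiring a further adjustment of the splitting.
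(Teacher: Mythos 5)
Your argument is essentially correct, but note that the paper itself offers no proof of this statement: it is quoted verbatim from \cite{GM08}, so the only comparison available is with the proof there, and your plan is in effect a reconstruction of it. Like Gracia-Saz--Mehta, you work in the split picture and block-diagonalize the data of Lemma \ref{lma:conn data} along $\ker\partial\subset C$ and $\im\partial\subset E$; the computations you outline do go through: regularity gives the subbundles, $\partial\circ\nabla^C=\nabla^E\circ\partial$ gives their invariance, the two conditions on $\phi$ constrain the $C_1$-component of $\phi|_{E_0}$ and the $C_0$-component of $\phi|_{E_1}$ respectively, so they are indeed independently solvable, and once the connections preserve the chosen complements the curvature identities $R_{\nabla^E}=-\partial\circ\omega$, $R_{\nabla^C}=-\omega\circ\partial$ force the two off-diagonal blocks of $\omega$ to vanish (comparing an $E_0$-valued with an $E_1$-valued expression, and likewise for $C$), exactly as you say. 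Two caveats. First, your existence step tacitly uses the \cite{GM08} correspondence between quadruples $(\partial,\nabla^E,\nabla^C,\omega)$ satisfying the structure equations and \vba structures on the split double vector bundle, both to recognize $D_0$ and $D_1$ as \vbas and to see that block-diagonal data is the same thing as a direct sum; this is fine in context but should be cited. Second, the uniqueness paragraph is the thinnest part: saying the invariants of $D_0'$ ``depend only on $D$'' requires checking that any isomorphism $D\simeq D_0'\oplus_A D_1'$ identifies $C_0'$ with $\ker\partial$ and $E_0'$ with $\coker\partial$ \emph{together with} their canonical flat connections, and that under this identification the class $[\omega_0']$ agrees with the one computed from an arbitrary splitting of $D$ (differences of splittings contribute only exact terms); moreover it leans on two further quoted facts from \cite{GM08}, the classification of type $0$ \vbas by $(\nabla^{E_0},\nabla^{C_0},[\omega_0])$ and the rigidity $D_1\simeq\rho_A^*(TE_1)$ of \eqref{eq:t1:vba}. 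With those points made explicit your proof is complete and follows the same route as the cited source.
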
%%%%%%%%%%%%%%%%%%%%%%%%%%%%%%%%%%%%%%%%%%%%%%%%%%%%%%%%%%%%%%%%%%%%%%%%%%%%%%%%%%%%%%

The decomposition of Theorem \ref{thm:typedec} is based on decomposing the core-anchor map $\partial:C \to E$ into a direct sum of a trivial map and an isomorphism. Namely, one uses isomorphisms $C \simeq \ker \partial \oplus \im \partial$ and $E \simeq \coker \partial \oplus \im \partial$ so that $\partial \simeq 0 \oplus \id_{\im\partial}$. The proof consists in showing that these isomorphisms can always be lifted to the full \vba structure by means of an appropriate choice of splitting. 

The type $0$ component $D_0$ can be thus assumed to have core $C_0=\ker \partial$ and side $E_0=\coker \partial$. Recall from Remark \ref{rmk:repincohom}, that any set of connection data $(\nabla^E,\nabla^C, \omega)$ for $D$ defines flat $A$-connections $\nabla^{\ker \partial}$ and $\nabla^{\coker\partial}$  on $\ker \partial$ and $\coker \partial$, respectively. Moreover, using the decompositions of $E$ and $C$ into type $0$ and type $1$ components, one can define the projection  $\omega_0 \in \Omega^2(A; \Hom(\coker \partial,\ker \partial))$ of $\omega$, which turns out to be a cocycle for the induced differential $d_{\nabla^{\scriptscriptstyle \Hom(\coker\partial, \ker\partial)}}$. Because of the change of splitting formulas of Remark \ref{rmk:changesplit}, the data $(\nabla^{\im\partial},\nabla^{\ker\partial},[\mkern1mu\omega_0])$ characterizing the isomorphism class of the type $0$ component is independent of the initially chosen splitting and it is thus intrinsically associated to the regular \vba $D$. 

\begin{proposition} \label{prop:reg type}%%%%%%%%%%%%%%%%%%%%%%%%%%%%%%%%%%%%%%%%%%%%%%%%%%%%%%%%%%%%%%
Let $(D,A,E,M)$ be a \vba of regular type. Then the following assertions are equivalent:
\begin{enumerate}[(i)]
\item $D$ is integrable,
\item $D_0$ is integrable,
\item $A$ is integrable and the periods of $\omega_0\in\Omega^2(A,\Hom(\coker \partial,\ker\partial))$ vanish, namely:
  $$\ 
\int_\sigma \omega_0 =0, \text{ for every } \mkern0.5mu[\mkern0.5mu\sigma] \in \pi_2(A),$$
\item $A$ is integrable and $[\mkern0.5mu\omega_0]$ lies in the image of the Van Est map, namely:\vspace{-1,7mm}
 $$\text{there exists }\mkern1mu \hat{\omega}_0 \in H^2\bigl(\G(A),\Hom(\coker \partial,\ker\partial)\bigr)\text{ such that }\,\VV(\hat{\omega}_0)=[\omega_0].$$\end{enumerate}
\end{proposition}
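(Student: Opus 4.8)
The plan is to establish the chain of equivalences by combining the results already assembled in the section. First I would prove $(i)\Leftrightarrow(ii)$: since $D\simeq D_0\oplus_A D_1$ by Theorem \ref{thm:typedec}, and $D_1$ is of type $1$ hence integrable iff $A$ is (Corollary \ref{int:injectivevba}), the decomposition together with Theorem \ref{prop:intiff} reduces integrability of $D$ to integrability of $D_0$. Concretely, $\Mon(D_0\oplus_A D_1)_x\cap\ker p$ splits, via the direct-sum structure over $A$, as $(\Mon(D_0)_x\cap\ker p_{D_0})\oplus(\Mon(D_1)_x\cap\ker p_{D_1})$, and the second summand vanishes by Corollary \ref{int:injectivevba} (injective core anchor of $D_1$), so the whole intersection vanishes iff $\Mon(D_0)\cap\ker p_{D_0}=\{0\}$; appealing to Theorem \ref{prop:intiff} for both $D$ and $D_0$ (and noting $A$ integrable is common to both, being forced by Proposition \ref{prop:Aint} and shared) gives the equivalence.

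Next, the equivalences $(ii)\Leftrightarrow(iii)\Leftrightarrow(iv)$ are exactly the content of Proposition \ref{thm:type0} applied to the type $0$ component $D_0$, once we use the normalization mentioned just before the statement: up to isomorphism $D_0$ has core $C_0=\ker\partial$, side $E_0=\coker\partial$, with the canonical flat connections $\nabla^{\ker\partial}$, $\nabla^{\operatorname{coker}\partial}$, and class $[\omega_0]\in H^2(A,\Hom(\coker\partial,\ker\partial))$ independent of splitting. Thus items (iii) and (iv) here are literally items i) and ii) of Proposition \ref{thm:type0} for $D_0$, and the ``Moreover'' in Proposition \ref{thm:type0} identifies the integrating \vbg of $D_0$ as $H_{0,\Omega_0}$.

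For the final ``Moreover'' clause, I would assemble the integration of $D$ from those of its summands using that the $\LIE$ functor commutes with direct sums over the base groupoid (as recalled in Subsection \ref{subsec:LieVB}). By Proposition \ref{prop:inttype1}, $D_1$ is integrated by $(s,t)^*(E_1\oplus E_1)$ over $G=\G(A)$, and by Proposition \ref{thm:type0}, $D_0$ is integrated by $H_{0,\Omega_0}$; hence $(s,t)^*(E_1\oplus E_1)\oplus_G H_{0,\Omega_0}$ is a \vbg whose linearization is $\LIE((s,t)^*(E_1\oplus E_1))\oplus_{A}\LIE(H_{0,\Omega_0})\simeq D_1\oplus_A D_0\simeq D$, as desired. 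One should take $G=\G(A)$ here so that the Van Est map in degree $2$ is injective and the hypothesis $\VV(\Omega_0)=\omega_0$ is both necessary and sufficient.

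The main obstacle I anticipate is the bookkeeping in the first step: verifying that the monodromy of a direct sum \vba over $A$ decomposes compatibly with $\ker p$, i.e. that a $D$-homotopy into $D_0\oplus_A D_1$ is the same data as a pair of $D_i$-homotopies over a common $A$-path, so that $\Mon(D)\cap\ker p$ genuinely splits as the direct sum of the two pieces. This is conceptually clear from the fact that Lie algebroid morphisms into a fibered product are pairs of morphisms, but making it precise for $A$-homotopies (with the correct boundary conditions) and for the homogeneous/dilation structures used in Theorem \ref{prop:intiff} requires a little care. Everything else is a direct citation of Proposition \ref{thm:type0}, Proposition \ref{prop:inttype1}, Corollary \ref{int:injectivevba} and Theorem \ref{prop:intiff}.
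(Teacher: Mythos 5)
Your proposal is correct and follows essentially the same route as the paper, whose proof is precisely the one-line combination of Theorem \ref{thm:typedec}, Propositions \ref{prop:inttype1} and \ref{thm:type0}, and the fact that $\LIE$ preserves direct sums. The extra monodromy bookkeeping you supply for $(i)\Leftrightarrow(ii)$ (splitting $\Mon(D)\cap\ker p$ along $D\simeq D_0\oplus_A D_1$, using that the projections and zero-section inclusions are \vba morphisms) is a sound way to make explicit what the paper leaves implicit, and is consistent with Theorem \ref{prop:intiff}.
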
%%%%%%%%%%%%%%%%%%%%%%%%%%%%%%%%%%%%%%%%%%%%%%%%%%%%%%%%%%%%%%%%%%%%%%%%%%%%%%%%%%%%%%

\begin{proof}
This follows from  Theorem \ref{thm:typedec}, Propositions \ref{prop:inttype1}, \ref{thm:type0} and \ref{prop:VE0}, and from the fact that the functor $\LIE$ preserves direct sums.
\end{proof}

\begin{remark}
 In the integrable case, an integrating \vbg can be given as $H:=(s,t)^*(E_1 \times E_1)\oplus_G H_{0,\hat{\omega}_0}$ where each summand was explained in Proposition \ref{prop:inttype1} and Remark \ref{rmk:H0}, respectively.
\end{remark}

%%%%%%%%%%%%%%%%%%%%%%%%%%%%%%%%%%%%%%%%%%%%%%%%%%%%%%%%%%%%%%%%%%%%%%%%%%%%%%%%%%%%%
\subsection{Computing the obstructions}        \label{sub:comp:obs}             %%%%%
%%%%%%%%%%%%%%%%%%%%%%%%%%%%%%%%%%%%%%%%%%%%%%%%%%%%%%%%%%%%%%%%%%%%%%%%%%%%%%%%%%%%%
So far, to compute the obstructions to the integrability of a \vba $D$, we must pick a leaf $L$ of $A$, extract the type $0$ part of the regular \vba $D|_{A_L}$ and then compute the corresponding spherical integrals. In this section, we show that there is a more practical formula for the obstructions only involving the connection data associated to \emph{any global splitting} of $D$.

Consider a \vba $(D,A,E,M)$ together with a splitting, and denote by $(\nabla^E,\nabla^C,\omega)$ the corresponding connection data.
 Given an $A$-homotopy $\sigma = adt+bds:TI^2\to A$, we shall denote $\gamma_t^s$ rather than $\gamma(t,s)$ the base map. Similarly, for any $t,t'\in[0,1]$, we will denote $a^s_{t',t}:T[t,t']\to A$ the $A$-path $a|_{[t',t]\times\{s\}}dt$, while:
\begin{align*}\hol^E_{a^s_{t'\!\!,t}}&:E_{\gamma^s_t}\to E_{\gamma^s_{t'}}, \\
                \hol^C_{a^s_{t'\!\!,t}}&:C_{\gamma^s_t}\to C_{\gamma^s_{t'}}
\end{align*}
will denote the corresponding holonomies. We also associate to $\sigma$ a map $F_\sigma: E|_{\gamma(0,0)} \to C|_{\gamma(1,0)}$ defined by the integral formula
\begin{equation}\label{eq:Fsigm}F_\sigma(e)= \int_0^1\!\!\!\int_0^1 \hol^C_{a_{1,t}^s}\!\circ\ \omega_{\gamma_t^s}(a,b)\circ \hol^E_{a^s_{t,0}}(e) \ dtds\in C|_{\gamma(1,0)},  \ \ e\in E|_{\gamma(0,0)}.\end{equation}
When an $A$-connection is flat, the corresponding holonomy only depends on the $A$-homotopy class of the underlying $A$-path. For the connection data $(\nabla^E,\nabla^C, \omega)$ we shall show that a generalized (or \emph{homotopy}) version of this fact holds.
The first step is to recall the following lemma which relates the holonomy and the curvature of an $A$-connection by an integral formula.
\begin{lemma}\label{lem:curvature:global}%%%%%%%%%%%%%%%%%%%%%%%%%%%%%%%%%%%%%%%%%%%%%%%%%%%%%%%%%%%%%%%%%%%%%%%%%%%%%%%%%%%%%%%%%%%%%%%%%%%%%%%%%%%%%%%%%%%%%%%%%%%%%%%%%%%%%%%%
For an arbitrary $A$-connection $\nabla$ on a vector bundle $V\to M$ and any $A$-homotopy $adt+bds:TI^2\to A$, the       %%%
following relation holds:                                                                                                                  %%%
\begin{equation}\label{eq:curvature:global}                                                                                                                                   %%%                      
\frac{d}{ds} \hol^V_{a_{1,0}^s}(x)=\int_0^1 \hol^V_{a^s_{1,t}}\circ\ R_\nabla(a,b)_{\gamma_{t}^{s}} \circ \hol^V_{a^s_{t,0}}(x)dt,                                            %%%
\end{equation}
where $R_\nabla\in\Omega^2(A,\End(V))$ denotes the curvature of $\nabla$.                                                                                                                                                                %%%
\end{lemma}%%%%%%%%%%%%%%%%%%%%%%%%%%%%%%%%%%%%%%%%%%%%%%%%%%%%%%%%%%%%%%%%%%%%%%%%%%%%%%%%%%%%%%%%%%%%%%%%%%%%%%%%%%%%%%%%%%%%%%%%%%%%%%%%%%%%
This result  goes back to \cite{NI01} in the case of usual linear connections (\emph{i.e.} $TM$-connections)  and can be proved by a simple argument of variation of parameters. The case of an arbitrary $A$-connection follows by pulling back the $A$-connection along $\sigma:TI^2\to A$.

\begin{lemma}\label{lma:Fsigma}
 Let $(\nabla^E,\nabla^C, \omega)$ denote connection data associated to $D$ and $\sigma = adt + b ds: TI^2 \to A$ an $A$-homotopy as above. Then,
 \[ \hol^E_{a^1_{1,0}} - \hol^E_{a^0_{1,0}} =-\partial \circ F_\sigma , \ \ \hol^C_{a^1_{1,0}} - \hol^C_{a^0_{1,0}} = -F_\sigma \circ \partial.\]
 \end{lemma}

 \begin{proof}
 This is a direct consequence of Lemma \ref{lem:curvature:global}. For instance, taking $e\in E_{\gamma(0,0)}$, we compute that:
\begin{align*}
\partial F_\sigma(e) &= \ \int_0^1\!\!\!\int_0^1 \partial\circ \hol^C_{a_{1,t}^s}\!\circ\ \omega(a,b)_{\gamma_t^s}\circ \hol^E_{a^s_{t,0}}(e)dt ds \\
%&= \mathcal{T}^C\int_0^1\!\!\!\int_0^1 \partial \hol^C_{a_{1,t}^s}\!\circ\ \omega(a,b)_{\gamma_t^s}\circ \hol^E_{a^s_{t,0}}(e)dt ds \\
&= \int_0^1\!\!\!\int_0^1  \hol^E_{a_{1,t}^s}\!\circ\ \partial\, \omega(a,b)_{\gamma_t^s}\circ \hol^E_{a^s_{t,0}}(e)dt ds \\
&= -\int_0^1\!\!\!\int_0^1  \hol^E_{a_{1,t}^s}\!\circ\ R_{\nabla^E}(a,b)_{\gamma_t^s}\circ \hol^E_{a^s_{t,0}}(e)dt ds \\
&= -\int_0^1 \frac{d}{ds} \hol^E_{a_{1,0}^s}(e) ds \\
&= \hol^E_{a^0_{1,0}}(e) - \hol^E_{a^1_{1,0}}(e).
\end{align*}
Here we used the relations in Theorem \ref{lma:conn data} to, first, commute $\partial$ and $\hol^E$ and, second, to obtain the curvature of $\nabla^E$ and apply Lemma \ref{lem:curvature:global}. The proof of the second statement is analogous.
 \end{proof}

\begin{remark}
 There is a clear interpretation of the above Lemma in terms of the $2$-term complex $\partial:C \to E$ (see also \cite[Prop. 3.13]{AriasCrainic}). Namely, an $A$-path $a$ defines a chain morphism $\Hol_a=(\hol^C_{a_{1,0}},\hol^E_{a_{1,0}})$ while an $A$-homotopy $\sigma$ defines a chain homotopy $F_\sigma: E_{\gamma(0)} \to C_{\gamma(1)}$ between the  $\Hol_{a^{s=0}}$ and $\Hol_{a^{s=1}}$:
 $$\SelectTips{cm}{}\xymatrix{ C_{\gamma(0)} \ar[d]_{\hol^C_{a^{s=0,1}}} \ar[r]^{\partial} & E_{\gamma(0)} \ar[d]^{\hol^E_{a^{s=0,1}}}\ar@{-->}@<1ex>[dl]_{F_\sigma}\\
             C_{\gamma(1)}                   \ar[r]_{\partial} & E_{\gamma(1)}. }$$
\end{remark}

Let us go back to our integrability problem. An $A$-sphere $\sigma=adt+bds: TI^2\to A$ based at $m\in M$ can be seen as an $A$-homotopy between the trivial $A$-path $0^A(m)dt$ and itself. The resulting map $F_\sigma \in \Hom(E_m,C_m)$ will be denoted:
\begin{equation}\label{eq:int:formula}
\int_\sigma^{\nabla} \!\omega(e):=  \int_0^1\!\!\!\int_0^1 \hol^C_{a_{1,t}^s}\!\circ\ \omega(a,b)_{\gamma_t^s}\circ \hol^E_{a^s_{t,0}}(e)dtds\in C_m, \ \ e\in E_m,\end{equation}
and will be called the \textbf{period of} $\omega$ \textbf{along} $\sigma$.
We can now relate the obstructions to the integrability of $D$ to the periods of the globally defined $\omega$. To that end, let $L$ be the leaf of $A$ such that the $A$-sphere $\sigma$ lies in $A_L\hookrightarrow A$.
Since the restriction $D|_{A_L}$ is of regular type, we can consider a decomposition:
$$\quad \quad\mathcal{T} : D|_{A_L} \longrightarrow \overline{D}_L= \overline{D}_{0,L} \oplus_A  \overline{D}_{1,L}, \quad \quad \quad \mathcal{T}=\mathcal{T}_0\oplus\mathcal{T}_1,$$
where $\overline{D}_{i,L}$ is of type $i=0,1$. Let
\begin{align*} 
\quad\quad\mathcal{T}^C:  C|_L  &\longrightarrow  \overline{C}_L =  \ker \partial \oplus  \overline{C}_{1,L},\quad\quad\quad \mathcal{T}^C=\mathcal{T}_0^C\oplus \mathcal{T}_1^C, \\
\quad\quad\mathcal{T}^E:  E|_L &\longrightarrow \overline{E}_L =  \coker\partial \oplus  \im\partial ,\quad\quad\quad \mathcal{T}^E=\mathcal{T}_0^E\oplus \mathcal{T}_1^E,
\end{align*}
be the isomorphisms induced by $\mathcal{T}$. We shall denote by
 $$\overline{\omega}_{0,L} = \in   \Omega^2(A_L, \Hom(\coker \partial,\ker \partial))$$
 the projection of $\omega$ to the type $0$ factor defined by $\overline{\omega}_{0,L} = \mathcal{T}_0^C \circ \omega|_{A_L} \circ  (\mathcal{T}^E)^{-1}|_{\coker\partial}$. 
 Then, Lemma \ref{lma:Fsigma} implies that 
\begin{align*}%\label{eq:int:lemma}
\mathcal{T}^C\left(\int_\sigma^{\nabla} \!\omega(e)\right)= \int_{\sig}  \overline{\omega}_{0,L}(\overline{e}_0) \oplus 0 \in  \ker{\partial} \oplus \overline{C}_{1,L},
\end{align*}
for any $A_L-$sphere $\sigma$, $e \in E|_L$ and $\overline{e}_0:=\mathcal{T}^E_0(e)$. This is so because the holonomy associated to the trivial $A$-path is trivial and, hence, $F_\sigma \circ \partial =0$ and $\partial \circ F_\sigma =0$ so that $F_\sigma$ takes the above form using the decomposition $\mathcal{T}$. Notice that, since $\overline{\omega}_{0,L}$ is a cocycle, the period of $\omega$ along $\sigma$ depends only on the homotopy class of $\sigma$ as in the type $0$ case.

By applying Corollary \ref{cor: leaves} and Proposition \ref{prop:reg type},  we see that \eqref{eq:int:formula} gives an integral expression for the obstructions in terms of a global splitting of $D$. Therefore, one can state the following:

\begin{theorem}\label{thm:integrals}
A \vba $(D,A,E,M)$ is integrable if and only if  $A$ is integrable and,  for any connection data $(\nabla^E,\nabla^C,\omega)$ induced by a splitting of $D$, the periods of $\omega$ vanish, namely:
$$ \quad \int^{\nabla}_\sigma \omega=0, \ \ \forall \sigma \in \pi_2(A).$$
%for all $A$-spheres $\sigma$.              %
\end{theorem}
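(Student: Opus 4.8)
The plan is to reduce the statement to the already-established leafwise criterion and then glue the leafwise integral formulas into a single global one. First I would invoke Corollary~\ref{cor: leaves} (together with Proposition~\ref{prop:Aint}) to reduce integrability of $D$ to integrability of $A$ plus integrability of each restriction $D|_{A_L}$ over the leaves $L$ of $A$. By Example~\ref{ex:leaf} each such $D|_{A_L}$ is of regular type, so Proposition~\ref{prop:reg type} applies: $D|_{A_L}$ is integrable if and only if $A_L$ is integrable (automatic once $A$ is) and the periods of the type-$0$ class $\overline\omega_{0,L}\in H^2(A_L,\Hom(\coker\overline\d_L,\ker\overline\d_L))$ vanish against every $[\sigma_L]\in\pi_2(A_L)$.

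The second step is to translate ``periods of $\overline\omega_{0,L}$ vanish'' into ``periods of $\omega$ vanish'', where $\omega$ is the curvature of an arbitrary, fixed \emph{global} splitting of $D$. This is exactly the content of Lemma~\ref{lma:int=2}: for any $[\sigma]\in\pi_2(A)$, which factors uniquely through some $[\sigma_L]\in\pi_2(A_L)$, one has $\mathcal{T}^C\bigl(\int_\sigma^{\nabla}\omega(x)\bigr)=0\oplus\int_{\sigma}\overline\omega_{0,L}(\overline x_0)$. Since $\mathcal{T}^C$ is an isomorphism and $x\mapsto \overline x_0=\mathcal{T}^E_0(x)$ is surjective onto $\overline E_{0,L}$, the vanishing of $\int_\sigma^{\nabla}\omega$ for all $x\in E|_L$ is equivalent to the vanishing of $\int_{\sigma_L}\overline\omega_{0,L}$ on all of $\overline E_{0,L}$, i.e. to the period condition in Proposition~\ref{prop:reg type}(iii) for the leaf $L$. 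Running this equivalence over all leaves $L$ and all spheres based on them yields the claimed criterion: $D$ is integrable iff $A$ is integrable and $\int^{\nabla}_\sigma\omega=0$ for every $[\sigma]\in\pi_2(A)$.

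Finally, I would note that the criterion is manifestly independent of the chosen global splitting, as remarked after the definition of $\int^{\nabla}_\sigma\omega$: changing the splitting changes $\omega$ by a coboundary for the flat connection data and changes the $A$-connections in a compatible way, so that the integrated quantity $\int^{\nabla}_\sigma\omega$ is unchanged on $A$-spheres (the coboundary term integrates to zero by Stokes and the boundary conditions defining an $A$-sphere, exactly as in the computation of $\overline\partial I(x)=0$ in the proof of Lemma~\ref{lma:int=2}). Hence it suffices to verify the condition for one splitting.

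The main obstacle is really packaged into the two lemmas being cited: the genuine work is (a) the reduction to leaves, which rests on the \vba-specific rigidity that one only needs $\Mon(D)\cap\ker p=\{0\}$ rather than an accumulation-type condition (Theorem~\ref{prop:intiff} and Corollary~\ref{cor: leaves}), and (b) the identification in Lemma~\ref{lma:int=2} of the global integral with the leafwise type-$0$ period, where the subtle point is that the off-diagonal and type-$1$ blocks of the integrand die after integration against an $A$-sphere, and the remaining type-$0$ block agrees with $\overline\omega_{0,L}$ up to an exact term (invoking the regular-type normal form of \cite{GM08}). Given those, the proof of Theorem~\ref{thm:integrals} itself is a short assembly, which is presumably why the authors state it immediately after Lemma~\ref{lma:int=2}.
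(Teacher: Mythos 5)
Your proposal is correct and follows essentially the same route as the paper: the paper's own proof of Theorem~\ref{thm:integrals} is precisely the assembly of Lemma~\ref{lma:int=2} with the criterion of Theorem~\ref{prop:intiff} (via the leafwise reduction of Corollary~\ref{cor: leaves} and the regular-type analysis of Proposition~\ref{prop:reg type}), which is what you do. Since Lemma~\ref{lma:int=2} is stated for an arbitrary splitting, the splitting-independence observation you add is automatic, but it is consistent with the paper's remark.
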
%%%%%%%%%%%%%%%%%%%%%%%%%%%%%%%%%%%%%%%%%%%%%%%%%%%%%%%%%%%%%%%%%%%%%%%%%%%%%%%%%%%%%%%%%%%%%%%%%%%%%%%%%%%%%%%%%%%%%%%%%%

\begin{remark}
A \vba together with a splitting can be seen as a special case of a Lie algebroid extension
 $$\SelectTips{cm}{}\xymatrix@C=15pt{\ker p\ \ar@{^{(}->}@<-1pt>[r] & D\ar@<-1pt>@{->>}[r]& A,}$$ 
 with a complete Ehresmann connection in the sense of \cite{Br}. In \cite{BZ} it is shown that there is an associated transgression map $\delta:p_E^*\pi_2(A)\to \G(\ker p)$ that fits into a long homotopy exact sequence and which is related to the integrability of the fibration (\cite{Br}). A careful inspection shows that, indeed, the integral formula in Theorem \ref{thm:integrals} coincides with the transgression map of \cite{BZ}. We shall expand on this aspect of things in a future work.
\end{remark}

We conclude this section with an explicit example of a non-integrable \vba $(D,A,E,M)$ in which $A$ is integrable.

\begin{example}[A non integrable \vba]\label{ex:non:int}
\ale Consider $E=S^2\times \mathbb{R}$ and $C=S^2\times \mathbb{R}$, both seen as a trivial vector bundles over $M=S^2$. 
%On $E$, we will use coordinates $(x,y,z,e)$ where implicitly, we assume that $x^2+y^2+z^2=1$. Note furthermore that in fact, $e\in \Gamma(E^*)$ denotes a linear coordinate on the fibers of $E$. We also fix a nowhere vanishing section $c\in \Gamma(C)$.
Let $A:=TS^2$ be the tangent algebroid of $S^2$, and consider the  split double vector bundle $D=\Dd$, so that
$$\SelectTips{cm}{}
D=\xymatrix@R=20pt@C=25pt{ *+[c]{TS^2\times\mathbb{R}\times\mathbb{R}}\ar[d]_-{}\ar[r]^-{ p} &*+[c]{TS^2}\ar[d]^{ }\\
           *+[c]{S^2\times \mathbb{R}}\ar[r]^{ }                          &*+[c]{S^2.}}$$
We define a \vba structure on $D$ by setting:
\begin{align*}
&&&&[X,Y]_D&=[X,Y]_{TS^2}+\bar{e} \ \omega_0(X,Y)\ \cc, & \rho_D(X)&=X, &&&&\\
&&&&[X,\cc\mkern1mu]_D&=0,                                     & \rho_D(\mkern1mu\cc\mkern1mu)&=0.&&&&
\end{align*}
Here $X,Y\in \Gamma(TS^2)$ are vector fields on $S^2$, $\bar{e}:E \to \R$ is the fiberwise linear coordinate function, $\omega_0$ denotes the standard symplectic structure on $S^2$ and $\cc\in \Gamma_c(E, D)$ denotes the core section corresponding as in \eqref{core_section} to a non-vanishing constant section $c\in \Gamma(C)$. The homogeneous structure for $D\to A$ is defined by $m_\lambda(a,e,c)=(a,\lambda e ,\lambda c)$.

 It is easy to verify that this defines the structure of a \vba $(D,TS^2,E,S^2)$ whose core-anchor is zero, i.e., that $D$ is a \vba of type $0$. Observe that $h(X)=X$ defines a horizontal lift whose associated $A$-connections are trivial and whose  curvature is given by:
$$\omega = \omega_0 \otimes (\bar{e} \otimes c)\in \Omega^2(S^2,\Hom(E,C)).$$			
 Theorem \ref{thm:integrals} can be directly applied to conclude that $D$ is not integrable. Namely, an $A$-sphere $\sigma:TI^2\to TS^2$ must be the derivative of its base map $\gamma: I^2 \to S^2$ (which collapses $\partial I^2$ to a point) and the integral \eqref{eq:int:formula} reduces to the standard integral
\[ \int^\nabla_\sigma \omega(e) = \bar{e}(e) \left[\int_{I^2} \gamma^*\omega_0\right] \ c|_{p_E(e)}, \]
which will not vanish for all spheres $\sigma$ since $\omega_0$ is a volume form.
\end{example}

\begin{remark}
%  Notice that in both the  non-integrable Example \ref{ex:non:int} and the integrable Example \ref{ex:int} the type $0$ \vba is of the form $D\simeq A \times \R \times \R$ where the brackets are given by extending those of the integrable $A$ by means of a \emph{curvature} $2$-form $\omega\in \Omega^2(A,\Hom(\R,\R))$. % In the above non-integrable case, since $A=TS^2$, $A$-spheres correspond to standard spheres on $S^2$ and the periods of curvature $2$-form correspond to the area of such spheres. The non-vanishing of the spherical periods of $\omega$ is then evident. 
 %In the integrable Example \ref{ex:int}, $A=T^*_{\pi_M} \su$ and it is easy to see that this curvature is given by \[ \omega = \pi_M \otimes (\be \times \cc) \in \Omega^2(T^*\su, \Hom(\R,\R)).\]
 %(See Example \ref{ex:int} for details on the notation.) 
 The above non-integrable Example \ref{ex:non:int} has a very similar structure to that of the previous integrable Example \ref{ex:int}. In the integrable Example \ref{ex:int}, though, the spherical periods of our Theorem \ref{thm:integrals} vanish automatically because $\pi_2(A)=0$. Indeed, this happens because $G=T^*SU(2)\rightrightarrows \su$ integrates $A=T^*_{\pi_M}\su$ and, then, the second homotopy group  of $A$ coincides with the one of the source fibers of $G$, namely, with $\pi_2(SU(2))=0$ (see also \cite{CF03, C00,BZ}).
%  
%  a straightforward computation shows that a necessary condition for $\sigma: TI^2 \to T^*_{\pi_M} \su$, covering $\gamma:I^2 \to \su$, to define an $A$-sphere is that $\int_{I^2}\gamma^*\omega_L = 0$ where $\omega_L$ denotes the symplectic form on the leaf $L\simeq S^2_r \subset \su$ where $\sigma$ lands. Finally, since $\sigma^* \omega \simeq \gamma^* \omega_L$, the condition $\sigma \in \pi_2(A)$ implies that the period $\int_\sigma \omega$ will vanish automatically.
\end{remark}

%%%%%%%%%%%%%%%%%%%%%%%%%%%%%%%%%%%%%%%%%%%%%%%%%%%%%%%%%%%%%%%%%%%%%%%%%%%%%%%%%%%%%%%%%%%%%%%%%%%%%%%%%%%%%%%%%%%%%%%%%%%%%%%%%%%%%%%%%%%%%%%%%%
%%%%%%%%%%%%%%%%%%%%%%%%%%%%%%%%%%%%%%%%%%%%%%%%%%%%%%%%%%%%%%%%%%%%%%%%%%%%%%%%%%%%%%%%%%%%%%%%%%%%%%%%%%%%%%%%%%%%%%%%%%%%%%%%%%%%%%%%%%%%%%%%%%
\section{Integrating $2$-term representations up to homotopy}\label{sec:reputh}
%%%%%%%%%%%%%%%%%%%%%%%%%%%%%%%%%%%%%%%%%%%%%%%%%%%%%%%%%%%%%%%%%%%%%%%%%%%%%%%%%%%%%%%%%%%%%%%%%%%%%%%%%%%%%%%
%%%%%%%%%%%%%%%%%%%%%%%%%%%%%%%%%%%%%%%%%%%%%%%%%%%%%%%%%%%%%%%%%%%%%%%%%%%%%%%%%%%%%%%%%%%%%%%%%%%%%%%%%%%%%%%%%%%%%%%%%%%%%%%%%%%%%%%%%%%%%%%%%%
%%%%%%%%%%%%%%%%%%%%%%%%%%%%%%%%%%%%%%%%%%%%%%%%%%%%%%%%%%%%%%%%%%%%%%%%%%%%%%%%%%%%%%%%%%%%%%%%%%%%%%%%%%%%%%%%%%%%%%%%%%%%%%%%%%%%%%%%%%%%%%%%%%
In this section, we study Lie theory of $2$-term representations up to homotopy by translating the results already obtained for \vbas by means of the equivalence of \cite{GM08}, as recalled in Section \ref{subsec:split}.

%%%%%%%%%%%%%%%%%%%%%%%%%%%%%%%%%%%%%%%5
\subsection{Representations up to homotopy of Lie groupoids} 
%%%%%%%%%%%%%%%%%%%%%%%%%%%%%%%%%%%%%%%555%%%%%%%%%%%%%%%%%%%%%%%%%%%%%%%%%%%%%%%555
\ale
Recall from Section \ref{subsec:split} that $2$-term representations up to homotopy of $A$ on $\E=C\oplus E$ correspond to  \vba structures $(\partial,\nabla^E,\nabla^C,\omega)$ on the split \DVB $\Dd = A\times_M E \times_M C$. Fixing the underlying algebroid $A$, we thus have an assignment
\[\VBA: \Rep(A)\to  \VBA(A)\]
where $\Rep(A)$ denotes the category of $2$-term representations up to homotopy of $A$ and $\VBA(A)$ the category of \vbas of the form $(D,A,E,M)$. Notice that the image of $\VBA$ consists of split \vbas. The notion of morphism between representations up to homotopy can be found in \cite{AriasCrainic} (see also \cite{GM08}) and $\VBA$ turns out to be a functor (see \cite{DJO}). In particular, two elements of $\Rep(A)$ are isomorphic if the corresponding split \vbas are.

In this subsection, we follow \cite{GM10} and introduce the analogue of the above correspondence for Lie groupoids. 

First, we introduce the notion of a $2$-term representations up to homotopy of a Lie groupoid $G$ over $M$. 
To that end, recall that any vector bundle $E\to M$ determines a smooth category $L(E)$ whose objects are elements of $M$, and morphisms between $x,y\in M$ are linear maps $E_x\to E_y$ (not necessarily invertible). We say that a smooth map $\Delta: G\to L(E)$ is a \textbf{quasi-action} of $G$ on $E$ if $\Delta$ preserves both source and target maps. A quasi-action is said to be \textbf{unital} if it preserves the units (i.e. it is the identity on objects) and \textbf{flat} if it preserves composition. Notice that a representation of $G$ on $E$ is just a flat unital quasi-action $\Delta:G\to L(E)$ such that $\Delta_g:E_{{\bold{s}}(g)}\to E_{\tt(g)}$ is invertible for each $g\in G$. 

\begin{definition}\label{def:ruthgroupoids}

Let $G$ be a Lie groupoid over $M$. A \textbf{representation up to homotopy} of $G$ on the graded vector bundle $\E=C\oplus E$, is given by a quadruple $(\partial,\Delta^C,\Delta^E,\Omega)$ where:
\begin{itemize}
\item $\partial:C\to E$ is a bundle map, 
\item $\Delta^C$ and $\Delta^E$ are unital quasi-actions on $C,E$, respectively, 
\item $\Omega\in \Gamma (G^{(2)}, \Hom({\bold{s}}_{(2)}^*E,\tt_{(1)}^*C))$ is a section assigning to each composable pair $(g_1,g_2)\in G^{(2)}$ a linear map $\Omega_{g_1,g_2}:E_{{\bold{s}}(g_2)} \to C_{\tt(g_1)}$ which is normalized (i.e. $\Omega_{g_1,g_2}=0$ if either $g_1$ or $g_2$ is a unit), 
\end{itemize}
satisfying the following conditions:

\begin{align}
\Delta^E_{g_1}\circ \partial&=\partial\circ \Delta^C_{g_1}\nonumber\\
\Delta^C_{g_1}\Delta^C_{g_2}- \Delta^C_{g_1g_2} &+ \Omega_{g_1,g_2}\partial=0\nonumber\\
\Delta^E_{g_1}\Delta^E_{g_2}- \Delta^E_{g_1g_2} &+ \partial\Omega_{g_1,g_2}=0\nonumber\\
\Delta^C_{g_1}\Omega_{g_2,g_3}-\Omega_{g_1g_2,g_3}&+\Omega_{g_1,g_2g_3}-\Omega_{g_1,g_2}\Delta^E_{g_3}=0 \label{eq:repGeqs}
\end{align}
for every composable triple $(g_1,g_2,g_3)$ in $G$.

\end{definition}

In other words, a representation up to homotopy of $G$ on $\E=C\oplus E$ is given by unital quasi-actions on $C$ and $E$, which are not necessarily flat, but the flatness of those is controlled by the 2-cocycle $\Omega$. One can also think that this notion corresponds to that of a representation of $G$ on the $2$-term complex \[C\overset{\partial}{\to} E.\] 

\begin{remark}
Representations up to homotopy of $G$ on an arbitrary graded vector bundle $\E$ can be defined in cohomological terms, as a differential in the complex $C(G;\E)$ of $\E$-valued groupoid cochains. This notion can be found in \cite{AriasCrainic2} for arbitrarily graded $\E$ and the equivalence to the above definition in the $2$-term case, in \cite{GM10}. %However, in the special case of 2-term representations (i.e. those concentrated in degree -1 and 0), an equivalent notion will be used in this work. 
\end{remark}

Following \cite{GM10} further, there is a $1:1$ correspondence between $2$-term representations up to homotopy of $G$ on $\E=C\oplus E$ and  \vbg structures on $(\tt^*C\oplus_G {\bold{s}}^*E,G,E,M)$, as follows. We denote the elements of $\tt^*C\oplus_G {\bold{s}}^*E$ as triples $(c,g,e)$ with $c\in C_{\tt(g)}$, $g\in G$ and $e\in E_{{\bold{s}}(g)}$. The source, target, identity and multiplication maps are then given by
\begin{align}
 \tilde{\bold{s}}(c,g,e)=e, \quad \tilde{\tt}(c,g,e)=\partial(c)+\Delta^E_g(e),\quad \tilde{\one}_e=\left(0^C(x),\one_x, e\right), \ e\in E_x \nonumber \\
(c_1,g_1,e_1)\cdot (c_2,g_2,e_2)=\left(c_1+\Delta^C_{g_1}(c_2)-\Omega_{g_1,g_2}(e_2),g_1\cdot g_2,e_2\right). \label{eq:splitvbgmaps}
\end{align}
Such \vbg structures will be called \emph{split}. 

\begin{remark}[\vbg splittings]\label{rmk:splitG}
\vbgs can be split in a similar way to the splitting of \vbas described in Section \ref{subsec:split}. For any \vbg $(H,G,E,M)$, there is an associated short exact sequence of vector bundles over $G$:
$$\SelectTips{cm}{}\xymatrix@C=17pt{ V^R \ \ar@{^{(}->}@<-0.35pt>[r] & H  \ar@{->>}[r]<-0.4pt> &  {\bold{s}}^*E,}$$
where the map on the right is $\gamma \mapsto (q_H(\gamma),\tilde{s}(\gamma))$.
The vector bundle $C \to M$ given by $C=\one^* V^R$ is called the {\bf right core} of $H$ and multiplication by zero on $H$ can be used to identify $V^R\simeq \tt^*C$. A {\bf right splitting} of $(H,G,E,M)$ is a splitting $h_G$ of the above short exact sequence such that $h_G(\one_x, e) = \tilde{\one}_e$, for all $x\in M$, $e\in E_x$.
 The corresponding vector bundle isomorphism $H \simeq \tt^*C \oplus_G {\bold{s}}^*E$ induces a \emph{split} \vbg structure on $(\tt^*C\oplus_G {\bold{s}}^*E,G,E,M)$ in the sense defined above. Thus, each splitting $h_G$ for $H$ determines a $2$-term representation up to homotopy of $G$ on $C\oplus E$. The reader is referred to \cite{GM10} for further details.
\end{remark}

Denoting $\Rep(G)$ the category of $2$-term representations up to homotopy of $G$ and $\VBG(G)$ the category of \vbgs with base $G$, we get an assignment \[\VBG: \Rep(G) \to \VBG(G),\] whose image consists of split \vbgs.

\begin{example}\label{ex:vbg0_1}
 Let $(\partial,\Delta^E,\Delta^C,\Omega)$ be a representation up to homotopy of $G$. When the map $\partial$ is zero, eqs. \eqref{eq:repGeqs} imply that $\Delta^E$ and $\Delta^C$ define ordinary representations of $G$ on $E$ and $C$, respectively. Then, $\Hom(E,C)$ also inherits a representation of $G$ given by
 \[ \Hom(E,C)|_{{\bold{s}}(g)} \ni \phi \mapsto \Delta^C_g \circ \phi \circ \Delta^E_{g^{-1}} \in \Hom(E,C)|_{\tt(g)}.\]
 Moreover, one can define a $2$-cochain $$\hat{\omega} \in C^2(G;\Hom(E,C)) = \Gamma(G^{(2)}, \tt_{(2)}^*\Hom(E,C))$$ in groupoid cohomology with coefficients in $\Hom(E,C)$ out of $\Omega$ by:
 \[ \hat{\omega}_{g_1,g_2} : = \Omega_{g_1,g_2} \circ \Delta^E_{g^{-1}_2g^{-1}_1}.\]
 The last equation in \eqref{eq:repGeqs} is then equivalent to requiring $\hat{\omega}$ being a cocycle in $C^2(G;\Hom(E,C))$ in the sense of, e.g., \cite{C00}. The corresponding split \vbg is the analogue of the type $0$ \vba of Section \ref{sub:vba0}, and we get that these are analogously characterized by representations of $G$ on $E$ and $C$ together with a cohomology class in $H^2(G;\Hom(E,C))$. (See also \cite{GM10}).
\end{example}

%%%%%%%%%%%%%%%%%%%%%%%%%%%%%%%%%%%%%%%555
\subsection{Lie theory for $2$-term representations}
%%%%%%%%%%%%%%%%%%%%%%%%%%%%%%%%%%%%%%%555
Let $G$ be a Lie groupoid with Lie algebroid $A$. By means of the correspondence between representations up to homotopy and \vbas/\vbgs, we shall define a differentiation operation $\Lie:\Rep(G) \longrightarrow \Rep(A)$ by the commutative diagram
 \begin{align*}
\SelectTips{cm}{}
\xymatrix{ \Rep(G) \ar[d]_{\Lie}\ar[r]^{\VBG} & \VBG(G) \ar[d]^{\LIE}\\
           \Rep(A) \ar[r]^{\VBA}& \VBA(A)}
\end{align*}
and we shall write $\Lie(\partial_G,\Delta^E,\Delta^C, \Omega) = (\partial_A,\nabla^E,\nabla^C,\omega)$.

To that end, we need to verify that this assignment $\Lie$ is well defined, namely, we must check that \emph{the \vba associated to a split \vbg by the $\LIE$ functor is naturally split}. 

To see this, consider a split \vbg structure on $\Gamma=\tt^*C\oplus {\bold{s}}^*E$. Let $c\in C_x$, $e\in E_x$ and $g(\e)\in G$ a curve such that ${\bold{s}}(g(\e))=x$  for any $\e,$ and $g(0)=1(x)$, so that $\dot{g}(0)=a$ defines a Lie algebroid element in $A_x=\ker(T{\bold{s}})_{\one_x}$. Using the notation of equations \eqref{eq:splitvbgmaps}, the curve
\[ \e \mapsto \gamma(\e)=(\e\Delta^C_{g(\e)}c, g(\e), e) \in \Gamma\]
is contained in the $\tilde{{\bold{s}}}$-fiber and starts at $\gamma(0)= (0^C,\one_x,e) =\tilde{\one}_e$. Then, $\dot{\gamma}(0) \in \LIE(\Gamma)_{e}$ defines a Lie algebroid element which, under the identifications $T\Gamma\simeq(T\tt)^*TC\oplus_{TG}(T{\bold{s}})^*TE$ and $TC|_{0^C} \simeq C \oplus TM$, can be written as
\[ \dot{\gamma}(0) = (c\oplus \rho(a),a,0) \in \LIE(\Gamma)_e\subset T_{(0,\one_x,e)}\Gamma.\]
We thus get the desired splitting map 
\begin{align*}
i_{\Gamma}:\Dd \to &\ \LIE(\Gamma),\\ 
(a,e,c)\mapsto&\  (c\oplus \rho(a),a,0)|_{(0,\one_x,e)}.
\end{align*}
It also follows from the previous argument that the core anchor is preserved by $\Lie$, namely, that $\partial_G=\partial_A$. Indeed, the anchor map of $\LIE(\Gamma)\simeq \Dd$ is given by $\rho_D=T\tilde{\tt}\circ i_{\Gamma}$, so we obtain from \eqref{eq:splitvbgmaps}:
$$\rho_D(a,e,c)=\left.\frac{d}{d\e}\right|_{\e=0} [\Delta^E_{g(\e)} (e + \e \partial_G c)]\in T_eE$$
where $a = \dot{g}|_{\e=0}$ as before. Thus, the induced core anchor is given by $\partial_A(c):=\rho_D(0^A,0^E,c)\equiv \partial_G(c)$ where we used the identification of $E$ as the core of $TE$ given in Example \ref{ex:TE1}.

For the rest of the structure maps, we get the following formulas.
\begin{lemma}
 Let $(\partial,\Delta^E,\Delta^C,\Omega)\in \Rep(G)$ a $2$-term representation up to homotopy of $G$, and  $(\partial,\nabla^E,\nabla^C, \omega)=\Lie(\partial,\Delta^E,\Delta^C,\Omega)\in \Rep(A)$ the induced representation of $A$. Then the following relations hold:
  \begin{alignat*}{3}
   \nabla^E_\alpha e (x) &=& \left.\frac{d}{d\e}\right|_{\e=0}&\  \Delta^E_{(\phi^\alpha_\e(x))^{-1}} (e\circ \tt\circ\phi^\alpha_\e(x)) \\
   \nabla^C_\alpha c (x) &=&\left. \frac{d}{d\e}\right|_{\e=0}& \ \Delta^C_{(\phi^\alpha_\e(x))^{-1}} (c\circ\tt\circ\phi^\alpha_\e(x)) \\
   \omega(\alpha,\beta)(e(x)) &=&\left.\frac{\partial^2}{\partial\e\partial r}\right|_{\substack{\e=0\\r=0}}&  \Bigl[
   \Delta^C_{(\phi^\beta_\e(x))^{-1}} \circ \Delta^C_{(\phi^\alpha_r\circ\tt\circ\phi^\beta_\e(x))^{-1}}  
   \circ \Omega_{\phi^\alpha_r\circ\tt\circ\phi^\beta_\e(x),\phi^\beta_\e(x)} (e(x))  \\  
   & & &  -  \Delta^C_{(\phi^\alpha_\e(x))^{-1}}\circ\Delta^C_{(\phi^\beta_r\circ\tt\circ\phi^\alpha_\e(x))^{-1}}  
   \circ\Omega_{\phi^\beta_r\circ\tt\circ\phi^\alpha_\e(x),\phi^\alpha_\e(x)} (e(x)) \Bigr] 
  \end{alignat*}
 where $x\simeq \one_x \in M\subset G$, $e \in \Gamma(E)$, $c\in \Gamma (C)$ and $\alpha,\beta \in \Gamma (A)$ are algebroid sections inducing the right-invariant flows $\phi^\alpha_\e, \phi^\beta_\e$ on $G$.
\end{lemma}
\begin{proof}
 This follows by direct computation using the definitions \eqref{curvaturehorlift} for the connection data out of the Lie algebroid structure which, in turn, comes from the groupoid structure \eqref{eq:splitvbgmaps}  on $\Gamma$. The key point is that the relevant brackets correspond to Lie brackets of the following right-invariant vector fields on $\Gamma$:
 for $c\in \Gamma (C)$, the right-invariant vector field $\hat{c}^R\in \mathcal{X}(\Gamma)$ has flow defined by
 \[ \phi^{\hat{c}^R}_\e (k,g,e) = (k + \e c(\tt(g)),g ,e), \quad (k,g,e)\in \tt^*C\oplus_G {\bold{s}}^*E;\]
 for $\alpha\in \Gamma A$, the right-invariant vector field $\chi_\alpha \in \mathcal{X}(\Gamma)$ defined by the linear section $\alpha^l$ of $\Dd$ is defined by
 \[ \left.\chi_\alpha\right|_{(c,g,e)} = \left.\frac{d}{d\e}\right|_{\e=0} \left(\Delta^C_{\phi^\alpha_\e(x)} c - \Omega_{\phi^\alpha_\e(x), g} (e),\, \phi^\alpha_\e(x) \cdot g,\, e\right) \in T_{(c,g,e)} \Gamma,\]
where $x=\tt(g)$. The formulas then follow straightforwardly from \eqref{curvaturehorlift}.
\end{proof}
\begin{remark}
The above formulas coincide with the ones underlying the differentiation map $\REP(G) \to \REP(A)$ introduced in \cite{AriasSchatz} (when restricted to our particular case of $2$-term representations).
\end{remark}

\begin{example}\label{ex:vbg0_2}
 Let $(\partial,\Delta^E,\Delta^C,\Omega)$ be a representation up to homotopy in which $\partial\equiv 0$ and let $\hat{\omega}\in C^2(G;\Hom(E,C))$ be the associated groupoid $2$-cocycle as in Example \ref{ex:vbg0_1}. Then, the last equation in the above Lemma is equivalent to 
 \[\omega = \VV(\hat{\omega}),\]
 where $\VV:C^\cdot(G;\Hom(E,C))\to \Omega^\cdot(A;\Hom(E,C))$ denotes the Van-Est map \cite{C00}. The  split \vbg $(H_{0,\hat{\omega}},G,E,M)=\VBG(\partial=0,\Delta^E,\Delta^C,\Omega)$ then defines an integration of the underlying type $0$ \vba, namely, 
 \[\LIE(H_{0,\hat{\omega}},G,E,M) \simeq \VBA(\partial=0, \nabla^E,\nabla^C, \VV(\hat{\omega})),\]
 where $\nabla^E$ and $\nabla^C$ are defined by the equations in the above Lemma.
\end{example}

\subsection{Integrability and obstructions}

Finally, the following is the natural notion of integrability for $2$-terms representations up to homotopy of a Lie algebroid stemming from the differentiation map $\Lie$.
\begin{definition}
A representation up to homotopy $(\partial,\nabla^E,\nabla^C,\omega)$ of a Lie algebroid $A$ is said to be {\bf integrable} if there exists an integration $G$ of $A$ and a representation up to homotopy $(\partial_G,\Delta^E,\Delta^C,\Omega) \in \Rep(G)$ such that $\Lie(\partial_G,\Delta^E,\Delta^C,\Omega)$ is isomorphic to $(\partial,\nabla^E,\nabla^C,\omega)$.% (as a split \vba).
\end{definition}

Suppose that $(\partial,\nabla^E,\nabla^C,\omega) \in \Rep(A)$ is integrable in the sense above. Then, an integration $(\partial_G,\Delta^E,\Delta^C,\Omega)\in \Rep(G)$ can be obtained as follows. Take a \vbg $(H,G,E,M)$ integrating the \vba $D=\VBA(\partial,\nabla^E,\nabla^C,\omega)$ and consider a splitting of $H$ (c.f. Remark \ref{rmk:splitG}) producing an element $(\partial_G,\Delta^E,\Delta^C,\Omega)\in \Rep(G)$. Since $\LIE$ preserves isomorphisms, we get:
 \begin{multline*}
  \VBA(\partial,\nabla^E,\nabla^C,\omega)= D\simeq \LIE(H) \simeq \LIE(\VBG(\partial_G,\Delta^E,\Delta^C,\Omega))\\=\VBA(\Lie(\partial_G,\Delta^E,\Delta^C,\Omega)),
   \end{multline*}
 and, thus, that $(\partial_G,\Delta^E,\Delta^C,\Omega)$ integrates $(\partial,\nabla^E,\nabla^C,\omega)$.

\begin{example}\emph{(Adjoint and coadjoint representations)}
Consider an integrable Lie algebroid $A$, and  $G$ an arbitrary Lie groupoid integrating $A$. Since the Lie groupoids $TG$ and $T^*G$ have Lie algebroids which are respectively isomorphic to $TA$ and $T^*\!A$ (see e.g. \cite{mackenzie-book, MX2}), we deduce that both $TA$ and $T^*\!A$ are integrable as \vbas.
The choice of a $TM$-connection on $A$, determines a splitting for both $TA$ and $T^*\!A$ (as in Example \ref{ex:splitTE})
%\footnote{In both cases, \vba splittings are in $1:1$ correspondence with linear $TM$-connections on $A$}
obtaining the so-called adjoint and coadjoint representations up to homotopy $\ad_{A,\!\nabla},$ $\ad^*_{A,\!\nabla} \in \Rep(A)$ of $A$ on the $2$-term complexes $\rho_A:A {\to} TM,$ and $\rho_A^t:T^*M{\to} A^*,$ respectively. % Analogously, a splitting $\nabla_G$ on $TG$ and $T^*G$, yields $ad_{G,\nabla_G},ad^*_{G,\nabla_G} \in \Rep(G)$. 
We thus obtain the fact that $\ad_{A,\nabla}, \ad^*_{A,\!\nabla} \in \Rep(A)$ are integrable if and only if $A$ is integrable. Integrations of $\ad_{A,\nabla}$, $\ad^*_{A,\!\nabla}$ can be obtained by splitting $TG$ and $T^*G$ as in Remark \ref{rmk:splitG}, respectively, for any integration $G$ of $A$.
\end{example}%%%%%%%%%%%%%%%%%%%%%%%%%%%%%%%%%%%%%%%%%%%%%%%%%%%%%%%%%%%%%%%%%%%%%%%%%%%%%%%%%%%%%%%%%%%%%%%%%%%%%%%%%%%%%%%%%%%%

%\subsection{Obstructions}

The integrability problem for a $2$-term representation up to homotopy of a Lie algebroid is then tied to the integrability problem for the corresponding \vba. Here, we shall rephrase the integrability criteria for \vbas given in Section \ref{sec:integrability} in terms of the representation data.

Given a $2$-term representation up to homotopy $(\partial,\nabla^E,\nabla^C,\omega)$ of $A$, and a leaf  a $L\subset M$ of $A$, we obtain a  representation up to homotopy $(\partial_L,\nabla^{E_L},\nabla^{C_L},\omega_L) \in \Rep(A_L)$ by pulling back the superconnection underlying $(\partial,\nabla^E,\nabla^C,\omega)$ along  the inclusion $A_L \hookrightarrow A$. Recall that the associated \vba, namely \break $\VBA(\partial_L,\nabla^{E_L},\nabla^{C_L},\omega_L)$, is of regular type, hence it comes with a class associated to the type $0$ part:
 $$\omega_{L,0} \in H^2\left(A_L, \Hom(\coker \partial_L,\ker\partial_L)\right).$$
 Then, as a direct consequence of Corollary \ref{cor: leaves},  Proposition \ref{prop:reg type} and Theorem \ref{thm:integrals} we obtain the following result.

\begin{corollary}\label{cor:integration reps}%%%%%%%%%%%%%%%%%%%%%%%%%%%%%%%%%%%%%%%%%%%%%%%%%%%%%%%%%%
Let $A$ be a Lie algebroid and $(\partial,\nabla^E,\nabla^C,\omega)\in \Rep(A)$ a $2$-term representation up to homotopy of $A$. The following assertions are equivalent:
\begin{enumerate}[i)]
 \item $(\partial,\nabla^E,\nabla^C,\omega)\in \Rep(A)$ is integrable.
\item $A$ is integrable and $(\partial_L,\nabla^{E_L},\nabla^{C_L},\omega_L) \in \Rep(A_L)$ is integrable for each leaf $L$ of $A$.
\item  $A$ is integrable and, for any leaf $L$ of $A$, the periods of $\omega_{L,0}$ vanish:
$$ \int_\sigma \omega_{L,0} = 0, \text{ for every } \sigma \in \pi_2(A_L),\vspace{-5pt}$$
\item  $A$ is integrable and the periods of $\omega$  vanish:
$$\int^{\nabla}_\sigma \omega=0, \text{ for every } \sigma\in \pi_2(A). $$
% $$\int_0^1\!\!\!\int_0^1 \hol^C_{a_{1,0}^s}\circ\ \omega(a,b)_{\gamma_t^s}\circ \hol^E_{a^s_{t,0}}(x)dt ds=0$$   for all $\sigma \in \pi_2(A)$.
\end{enumerate}\end{corollary}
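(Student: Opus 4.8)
The plan is to transport the entire question to \vbas through Proposition \ref{prop: integration reps} and then quote the integrability criteria established earlier in this section. Set $D:=\VBA(\E)$, the \vba over $A$ with core $C$, side $E$, core anchor $\d$, and connection data $(\nabla^E,\nabla^C,\omega)$ coming from the chosen splitting. By Proposition \ref{prop: integration reps}, $\E$ is integrable precisely when $D$ is; hence the equivalence of (i) and (iv) is nothing but Theorem \ref{thm:integrals} applied to $D$, since the periods $\int^{\nabla}_\sigma\omega$ in condition (iv) are literally the ones occurring there, and that theorem also supplies the ``$A$ integrable'' clause whenever $D$ is integrable.

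Next I would establish the naturality statement $\VBA(\E_L)\cong D|_{A_L}=p^{-1}(A_L)$ for every leaf $L$ of $A$. By construction $\E_L$ is obtained by restricting the superconnection along $i_{A_L}:A_L\hookrightarrow A$, so its connection data is $(\nabla^E|_{A_L},\nabla^C|_{A_L},\omega|_{A_L})$; on the other hand, the pull-back \vba of $D$ along $i_{A_L}$ described in Subsection \ref{subsec:LieVB} carries exactly this same connection data with respect to the induced splitting, whence the two \vbas over $A_L$ are isomorphic. Granting this, apply Corollary \ref{cor: leaves} to $D$: $D$ is integrable iff $A$ is integrable and $D|_{A_L}$ is integrable for every leaf $L$; and $D|_{A_L}=\VBA(\E_L)$ is integrable iff $\E_L\in\Rep(A_L)$ is integrable, again by Proposition \ref{prop: integration reps}, now over $A_L$. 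This yields (i) $\Leftrightarrow$ (ii).

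For (ii) $\Leftrightarrow$ (iii), recall from Example \ref{ex:leaf} that $D|_{A_L}$ is always of regular type, with associated type-$0$ class $[\omega_L]\in H^2(A_L,\Hom(\coker\d_L,\ker\d_L))$ as in the paragraph preceding the statement. Proposition \ref{prop:reg type} then says $D|_{A_L}$ is integrable iff $A_L$ is integrable and the periods $\int_\sigma\omega_L$ vanish for all $[\sigma]\in\pi_2(A_L)$; the integrability of $A_L$ is automatic from that of $A$ (e.g. the subgroupoid $s^{-1}(L)\cap t^{-1}(L)$ of any integration $G$ of $A$ integrates $A_L$). Since every class in $\pi_2(A)$ factors through $\pi_2(A_L)$ for a unique leaf $L$, collecting the leafwise statements over all leaves gives (iii), closing the cycle against (iv) via Theorem \ref{thm:integrals}.

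The only point that is not a formal chase is the naturality isomorphism $\VBA(\E_L)\cong D|_{A_L}$: the correspondence of \cite{GM08} is defined after choosing a splitting, so one must check that pulling back the superconnection and pulling back the split \vba produce isomorphic structures over $A_L$. This is where the (routine) bookkeeping sits, but it is essentially forced once both sides are written in terms of the restricted connection data, and in any case the class $[\omega_L]$ is independent of these choices. Everything else is a diagram of already-proved equivalences, so I expect the write-up to be short.
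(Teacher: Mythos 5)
Your proposal is correct and follows essentially the same route as the paper: translate everything to the \vba $D=\VBA(\E)$ via Proposition \ref{prop: integration reps}, then quote Corollary \ref{cor: leaves}, Proposition \ref{prop:reg type} and Theorem \ref{thm:integrals}. The only extra content is your explicit check of the naturality isomorphism $\VBA(\E_L)\cong D|_{A_L}$, which the paper uses implicitly when it defines $\E_L$ by pulling back the superconnection; your bookkeeping there is sound and consistent with the paper's setup.
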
%%%%%%%%%%%%%%%%%%%%%%%%%%%%%%%%%%%%%%%%%%%%%%%%%%%%%%%%%%

\begin{remark}[Relation to simplicial integration]
 In \cite{CA01}, the authors provide an integration scheme that applies to general representations up to homotopy of $A$ on graded vector bundles. 
 %This integration procedure does not involve any smoothness requirement and is unobstructed. (It is of infinite-dimensional nature, though). 
 In this Remark, we elaborate on how the integrability problem is reflected in their construction.
 Given a Lie algebroid $A$ and a representation up to homotopy $\E \in \REP(A)$, the authors of \cite{CA01} provide a representation up to homotopy ``$\int \E$'' of the (infinite dimensional) infinity groupoid $\Pi_\infty(A)$ associated to $A$. Some of these representations of $\Pi_\infty(A)$ come from representations up to homotopy (in the sense of the present paper) of the Weinstein groupoid $G=\G(A)$ but not all of them do. Indeed, focusing on the $2$-term case and denoting $\E\in\Rep(TS^2)$ the representation up to homotopy underlying Example \ref{ex:non:int}, it is shown in \cite[Prop.\,5.4]{CA01} that $\int \E$ cannot be quasi-isomorphic to a representation comming from $\G(TS^2)$. Within the setting of this paper, this problem can be understood from the fact that the underlying \vba is not integrable, \emph{i.e.} it is an integrability problem for the \vba $D=\VBA(\E)$. 
\end{remark}

%%%%%%%%%%%%%%%%%%%%%%%%%%%%%%%%%%%%%%%%%%%%%%%%%%%%%%%%%%%%%%%%%%%%%%%%%%%%%%%%%%%%%%%%%%%%%%%%%%%
%\subsection{Holonomies of superconnections and Smoothness of integration}\label{subsec: superconnections}
%%%%%%%%%%%%%%%%%%%%%%%%%%%%%%%%%%%%%%%%%%%%%%%%%%%%%%%%%%%%%%%%%%%
%%%%%%%%%

%\comment{Here relate to Camilo-Florian and Igusa... mention the 2-rep... it could be just a bunch of remarks...}

%In \cite{CA01}, the authors provide with a different integration scheme that applies to representations up to homoropy of $A$ on arbitrarily graded vector spaces. The output of this integration procedure consists of a representation up to homotopy of the infinity groupoid $\Pi_\infty(A)$ (see below). It is an open question in \cite{CA01} to find criteria under which such a representation is equivalent to a representation up to homotopy of an integration $G$ of $A$. In this subsection, we would like to discuss this question in the case of $2$-term representations up to homotopy. %Namely, we relate the above question to the smoothness of the underlying \vba.

%\comment{A: I have to fill this in... the idea would be to take a quasi-isomorphism for each leaf $L$ of $A$ between the Camilo-Florian holonomy and Oli's integral holonomy... this would be done by splitting into type $1+0$ and thus getting rid of some of the extra terms in the C-F formula...}

\end{document}